\documentclass[12pt]{amsart}
\usepackage{latexsym,amscd, amsmath, amssymb,epsfig,xypic,tikz, array} 
\usepackage{graphicx}
\usetikzlibrary{shapes,patterns,arrows,matrix,positioning,decorations.pathreplacing,patterns.meta}
\usepackage{float}
\usepackage{enumerate}
\usepackage{verbatim}

\theoremstyle{plain}
  \newtheorem{theorem}{Theorem}[section]
  \newtheorem{proposition}[theorem]{Proposition}
  \newtheorem{lemma}[theorem]{Lemma}
  \newtheorem{corollary}[theorem]{Corollary}
  \newtheorem{conjecture}[theorem]{Conjecture}
\theoremstyle{definition}
  \newtheorem{definition}[theorem]{Definition}
  \newtheorem{example}[theorem]{Example}

\theoremstyle{remark}

\numberwithin{equation}{section}

\def\FF{{\mathbb F}}
\def\NN{{\mathbb N}}

\def\ZZ{{\mathbb Z}}

\def\calP{{\mathcal P}}
\def\calS{{\mathcal S}}
\def\calX{{\mathcal X}}
\def\calY{{\mathcal Y}}

\def \sd{\mathop{\rm sd}} 

\newcommand{\SpCpx}[2]{\Delta(\mathcal{S}_{#2}(#1))}
\newcommand{\ApCpx}[2]{\Delta(\mathcal{A}_{#2}(#1))}
\newcommand{\BpCpx}[2]{\Delta(\mathcal{B}_{#2}(#1))}
\newcommand{\Sp}[2]{\mathcal{S}_{#2}(#1)}
\newcommand{\Ap}[2]{\mathcal{A}_{#2}(#1)}
\newcommand{\Bp}[2]{\mathcal{B}_{#2}(#1)}

\newcommand{\notdivide}{\kern-2.1pt\not\kern2.1pt\bigm|}
\newcommand{\bs}{\backslash}

\DeclareMathOperator \End{\mathop{\rm End}\nolimits}

\DeclareMathOperator \Hom{\mathop{\rm Hom}\nolimits}

\DeclareMathOperator \Res{\mathop{\rm Res}\nolimits}

\DeclareMathOperator \SB{\mathop{\rm SB}\nolimits}

\DeclareMathOperator{\Stab}{Stab}

\title{Synthetic Buildings For Finite Groups}
\author{Emily Gullerud}
\email{emily.gullerud@gmail.com}
\author{Peter Webb}
\email{webb@umn.edu}
\address{School of Mathematics\\
University of Minnesota\\
Minneapolis, MN 55455, USA}

\date{August 2026}
\subjclass[2020]{Primary: 57M07; Secondary: 05E18, 20C20, 20J06, 51E24}
\keywords{building, finite group, simplicial complex}

\begin{document}
\begin{abstract}

We introduce synthetic buildings for each finite group $G$ and prime $p$. These are $G$-simplicial complexes, among which are the $p$-subgroups complex of K.S. Brown, and also the buildings of finite groups of Lie type in characteristic $p$. Synthetic buildings are useful because of special properties, including providing a formula for the cohomology of $G$. Our goal is to describe the kinds of synthetic buildings that can arise, and ideally to classify them. The \textit{minimal} synthetic buildings play a special role and for most groups there are not very many of these. However, we show that it is possible to find sequences of finite groups with increasingly many minimal synthetic buildings as we move along the sequence, and also groups with minimal synthetic buildings of different dimensions. It is also possible to find groups with infinitely many non-minimal synthetic buildings of a given dimension. In other situations, we show that there are no minimal synthetic buildings of equivariant homotopy type distinct from those of Brown's complex and the complex consisting of a single point. We make a number of conjectures. 

\end{abstract}
\maketitle
\centerline{\it To the memory of Jon Alperin}


\section{Introduction}

Since the 1970s it has been known that the complex of non-identity $p$-subgroups of a finite group introduced by K.S. Brown plays a remarkable role in determining cohomological properties of the group. Other similar complexes have been introduced, for example by Quillen (using elementary abelian $p$-subgroups) and Bouc (using $p$-radical subgroups), as well as others, including complexes identified by Robinson and Alperin, and these all turn out to have the same equivariant homotopy type as Brown's complex. When this happens, it means that the cohomological and topological information obtained from all these complexes is substantially the same.

The difficulty in finding other complexes with similar properties, but of different equivariant homotopy types, has led us to ask how many there are. Whether there are only a few, or very many, the answer seems interesting in either case. To provide a framework in which to ask questions of this type, we introduce topological conditions that a simplicial complex with a group action may satisfy, and call such simplicial complexes \textit{synthetic buildings}. They include Brown's complex for each finite group and prime $p$.  Brown's complex is known to coincide up to equivariant homotopy equivalence with the building (in the sense of Tits) in the case of a group of Lie type in characteristic $p$, and this is the reason for the name we choose.

There are known results about synthetic buildings that were originally proved with complexes such as Brown's in mind, but  were stated in greater generality than this. Thus it is already known, for every finite group and prime $p$, that every synthetic building provides a  formula for the $p$-torsion part of the group cohomology. Over a complete $p$-local ring the augmented chain complex of a synthetic building is homotopic to a perfect complex. The Euler characteristic of a synthetic building is congruent to 1, modulo the order of a Sylow $p$-subgroup of the group. The orbit complex of a synthetic building is mod $p$ acyclic. We review these results in Section~\ref{definitions-section}  after making initial definitions. They are a justification for the study of synthetic buildings in the first place.

An important section in this paper is the last one, Section~\ref{conjectures-section}, in which we present a number of conjectures. Our conjectures have to do with the number of synthetic buildings that a group may have, and their structure, especially as it relates to Brown's complex. We also provide conjectures that strengthen Quillen's conjecture. It could be helpful to the reader to look at these conjectures early on, because they express our view of the shape of the theory.

We have been led to these conjectures by the theory presented in the earlier sections. It is immediate from our definitions that all finite groups of order divisible by $p$ have the possibility that a synthetic building may be just a single point. We term \textit{exotic} a synthetic building not of this form and not equivariantly homotopy equivalent to Brown's complex. In Section 3 we review some known examples of exotic synthetic buildings, pointing out properties that inform the conjectures. In Section 4 we develop some basics. For example it is not too hard to see that if the group is either a $p$-group or a group of order prime to $p$, the only possible synthetic building has the equivariant homotopy type of Brown's complex, and for a $p$-group this is a single point. The hardest part of this is done in Theorem~\ref{p-group-theorem}. Thus for $p$-groups there are no exotic synthetic buildings. It turns out that the same is true for many other groups as well.

For each finite group and prime $p$ the list of previously known examples of synthetic buildings has always been finite. In Section~\ref{infinite-family-subsection} we show (in more than one way) that a group may have infinitely many synthetic buildings, and even infinitely many of the same dimension. This possibility leads us to impose an extra condition on synthetic buildings, namely that they be minimal. A synthetic building is \textit{minimal} if it contains no invariant subcomplex that is also a synthetic building. In the infinite lists we construct, only finitely many of the synthetic buildings are minimal.

We show in Example~\ref{arbitrary-minimal-example} that it is possible to find pairs, consisting of a group and a prime $p$, for which there are arbitrarily many minimal synthetic buildings. By this we mean that given a natural number $n$, we can find a group $G_n$ that has at least $n$ minimal synthetic buildings at $p$. (The construction is given when $p=2$ but can be modified for arbitrary $p$.) However, in all the cases we consider the number of minimal synthetic buildings remains finite, and this is Conjecture~\ref{finiteness-conjecture}. We also show in Example~\ref{two-dimensions-example} that it is possible for a group to have minimal non-trivial synthetic buildings of different dimensions. In the other direction we develop a technique in Section~\ref{no-exotic-minimal-subsection} to show that in some cases there are no exotic minimal synthetic buildings. For instance, we prove in Corollary~\ref{Lie-rank-2-uniqueness} that this is so for groups of Lie type in characteristic $p$ in the case of Lie rank at most 2. Conjecture~\ref{BN-pair-conjecture} is that there are no exotic minimal synthetic buildings for groups of Lie type without restriction on the Lie rank. The truth of this conjecture would mean we have characterized buildings for groups of Lie type by our topological conditions and minimality.

Elsewhere in Sections 5 and 6 we characterize groups with a strongly $p$-embedded subgroup in terms of their synthetic buildings (Corollary~\ref{SPES-corollary}), and in Section~\ref{incidence-graphs-subsection} we characterize which of certain incidence graphs are synthetic buildings for the alternating and symmetric groups.

In Section~\ref{constructions-section} we develop several operations on synthetic buildings, and in particular we put the structure of a semigroup on the set of equivariant homotopy equivalence classes of them in two different ways (one of which is a monoid). In Section~\ref{Lefschetz-module-section} we provide a proof of a result stated in an earlier paper that gives a way to compute the Lefschetz modules of synthetic buildings. No proof was provided at the time. It has a consequence for the kind of projective modules that can appear as summands of the Lefschetz module.

We are grateful to Kevin Piterman for his careful reading of the manuscript and helpful comments that have simplified and clarified the exposition.

\section{Definitions}
\label{definitions-section}
Throughout, $G$ will be a finite group and $p$ a prime. By a \textit{$G$-simplicial complex} we mean a finite simplicial complex $\Delta$ with a simplicial action of $G$ such that each setwise simplex stabilizer fixes the simplex pointwise.

\begin{definition}
\label{synthetic-building-definition}
We say that a $G$-simplicial complex $\Delta$ is a \textit{synthetic building for $G$ at the prime $p$} if the following conditions hold:
    \begin{enumerate}
        \item The fixed point space $\Delta^P$ is $N_G(P)$-equivariantly contractible for all nonidentity $p$-subgroups $P\leq G$, and
        \item Every simplex stabilizer has order divisible by $p$.
    \end{enumerate}
We say that $\Delta$ is a \textit{weak synthetic building for $G$ at the prime $p$} if condition (1) is satisfied (and not necessarily condition (2)).
\end{definition}

We immediately observe that synthetic buildings are weak synthetic buildings, the property of being a weak synthetic building is preserved under $G$-homotopy equivalence, although the property of being a synthetic building need not be preserved under $G$-homotopy equivalence because the $G$-homotopy equivalence may introduce simplex stabilizers of order not divisible by $p$. The following is an equivalent way to state condition (1).

\begin{proposition}
\label{equiv-condition-proposition}
    Condition (1) is equivalent to the requirement 
\begin{enumerate}
    \item [(1$'$)] $\Delta^H$ is ordinarily contractible, for all subgroups $H\le G$ with $O_p(H)\ne 1$.
\end{enumerate}
\end{proposition}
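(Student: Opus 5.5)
The plan is to reduce both conditions to statements about ordinary fixed-point sets, using the standard criterion for equivariant contractibility. Our standing hypothesis is that each setwise simplex stabilizer fixes its simplex pointwise. This makes a finite $G$-simplicial complex a finite $G$-CW complex, and the same holds for any subgroup acting on an invariant subcomplex. For such complexes the equivariant Whitehead theorem (Bredon, tom Dieck) gives the following. If $N$ is a finite group acting on a finite $N$-CW complex $X$, then $X$ is $N$-equivariantly contractible if and only if $X^K$ is (non-empty and) ordinarily contractible for every subgroup $K\le N$. One direction is elementary: an $N$-equivariant contraction restricts to a contraction of each $X^K$. The other direction is the Whitehead theorem applied to the map $X\to \mathrm{pt}$.

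For (1)$\Rightarrow$(1$'$), let $H\le G$ with $P:=O_p(H)\ne 1$. Then $P$ is a nonidentity $p$-subgroup and $H\le N_G(P)$. Since $P\le H$, we have $\Delta^H=(\Delta^P)^H$. By (1), $\Delta^P$ is $N_G(P)$-equivariantly contractible. Restricting an $N_G(P)$-equivariant contraction to $H$-fixed points shows that $(\Delta^P)^H$ is contractible. This direction uses only the easy half of the criterion.

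For (1$'$)$\Rightarrow$(1), let $P\ne 1$ be a $p$-subgroup and $N=N_G(P)$. Note that $\Delta^P$ is an $N$-invariant subcomplex, and it inherits the property that simplex stabilizers fix simplices pointwise. By the criterion, it suffices to show that $(\Delta^P)^K$ is contractible for every $K\le N$. We have $(\Delta^P)^K=\Delta^{\langle P,K\rangle}=\Delta^{PK}$. Here $PK$ is a subgroup because $K$ normalizes $P$. Moreover $P\trianglelefteq PK$ and $P$ is a $p$-group, so $P\le O_p(PK)$ and hence $O_p(PK)\ne 1$. Condition (1$'$) then gives that $\Delta^{PK}$ is contractible, which is what we need.

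The only nontrivial ingredient is the equivariant Whitehead theorem. The main point needing care is that its hypotheses hold: $\Delta^P$ with its $N_G(P)$-action must be a genuine $N_G(P)$-CW complex, which is exactly why the pointwise-fixing convention on $G$-simplicial complexes is imposed. We also adopt the convention that contractible spaces are non-empty. This is consistent in both directions, since every fixed-point set involved is required to be contractible.
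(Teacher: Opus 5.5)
Your proposal is correct and is essentially the paper's proof. Both directions reduce, via Bredon's criterion (the equivariant Whitehead theorem), to ordinary contractibility of fixed-point sets. One direction uses $\Delta^H=(\Delta^{O_p(H)})^H$; the other uses $(\Delta^P)^K=\Delta^{PK}$ with $P\le O_p(PK)$. The paper phrases that second step through subgroups $\bar K/P$ of $N_G(P)/P$, which amounts to the same thing.
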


\begin{proof}
    We use the fact that if $\Delta$ is a $K$-simplicial complex for some finite group $K$ then the map $f:\Delta\to \bullet$ is a $K$-equivariant homotopy equivalence if and only if for all subgroups $J\le K$ the restriction $f:\Delta^J\to \bullet$ is an ordinary homotopy equivalence. One direction of this is a theorem of Bredon \cite[Sect. II]{Bre1967}.

With this in mind, assume condition (1). If we let $H$ be a subgroup of $G$ with $O_p(H)\ne 1$ then $\Delta^H = (\Delta^{O_p(H)})^H $ and this is contractible because $\Delta^{O_p(H)}$ is equivariantly contractible for $N_G(O_p(H))$ and $H$ is a subgroup of $N_G(O_p(H))$.

Conversely, if we assume $\Delta^H\simeq \bullet$ whenever $O_p(H)\ne 1$ and $1\ne P$ is a $p$-subgroup we show $\Delta^P$ is $N_G(P)$-contractible, which is the same as the condition $\Delta^P\simeq_{N_G(P)/P}\bullet$. It suffices to show that $(\Delta^P)^K\simeq\bullet$ for all subgroups $K\le N_G(P)/P$. Writing $K=\bar K/P$, this is $\Delta^{\bar K}\simeq \bullet$, which holds because $P\le O_p(\bar K)$, so $O_p(\bar K)\ne 1$.
\end{proof}

We present the first examples of synthetic buildings.

\begin{example}
    If $p$ divides $|G|$, then the simplest case of a synthetic building for $G$ at $p$ is the one point space with the trivial action of $G$. Because this synthetic building exists for every finite group of order divisible by $p$, it does not provide any further information about the group. We sometimes refer to it as the \textit{trivial} synthetic building, and refer to synthetic buildings not of this equivariant homotopy type as \textit{non-trivial}.
\end{example}

\begin{example}
    If $p$ does not divide $|G|$ the only synthetic building for $G$ is the empty simplicial complex, by condition (2) of the definition. Every $G$-simplicial complex is a weak synthetic building in this situation. Furthermore, if a group $G$ has the empty set as a synthetic building, then $p\notdivide |G|$, by condition (1) of the definition. 
\end{example}

\begin{example}
We let $\Sp{G}{p}$ be the poset of all non-identity $p$-subgroups of $G$, ordered by inclusion, $\Ap{G}{p}$ the subposet of all non-identity elementary abelian $p$-subgroups and $\Bp{G}{p}$ the subposet of all non-identity $p$-radical subgroups, that is, subgroups $H\le G$ with $H=O_p(N_G(H))$. The corresponding order complexes $\SpCpx{G}{p}$, $\ApCpx{G}{p}$, and $\BpCpx{G}{p}$ were studied by Brown~\cite{Bro1975}, Quillen~\cite{Qui1978} and Bouc~\cite{Bou1984} respectively. For any finite group, these \textit{$p$-subgroup complexes} are all synthetic buildings at $p$. In fact, $\SpCpx{G}{p}$, $\ApCpx{G}{p}$, and $\BpCpx{G}{p}$ are all equivariantly homotopy equivalent \cite{The1991}, so it suffices first to show that just $\SpCpx{G}{p}$ is a weak synthetic building, and then also to show that these complexes satisfy condition (2) of Definition~\ref{synthetic-building-definition}. For the first part, condition ($1'$) of Proposition~\ref{equiv-condition-proposition} was verified in \cite[Lemma 2.1.2]{Web1991}. For condition (2), the simplices in any of these complexes are chains of non-identity $p$-subgroups and such a chain is always stabilized by the smallest group in the chain.

These complexes and others are reviewed by Smith in \cite{Smi2011}. There is a complex due to G.R. Robinson which is the subcomplex of $\SpCpx{G}{p}$ whose simplices are the chains $P_0<\cdots<P_n$ such that $P_i\triangleleft P_n$ for all $i$. In \cite{Alp1990} Alperin refers to three further simplicial complexes that have a separate history within group theory. These are the simplicial complexes where the simplices are (a) sets of Sylow $p$-subgroups with non-empty intersection, (b) sets of subgroups of order $p$ that generate a $p$-subgroup, and (c) sets of commuting elements of order $p$. All these complexes are equivariantly homotopy equivalent to $\SpCpx{G}{p}$ and satisfy condition (2) so are synthetic buildings.

\end{example}
\begin{example}
    The building in the sense of Tits of a finite group $G$ of Lie type in characteristic $p$ is a synthetic building for $G$ at $p$. In justification of this, it was claimed by Th\'evenaz-Webb~\cite{The1991} and confirmed by Piterman~\cite{Pit2024} that for such a group $G$, the complex $\BpCpx{G}{p}$ is the barycentric subdivision of the building, from which it follows that buildings are synthetic buildings. This means that the $p$-subgroup complexes can be regarded as a generalization of buildings to all finite groups, at every prime $p$, and hence synthetic buildings are also such a generalization.   It is this connection that motivates the terminology `synthetic building'. We conjecture in the last section that the only non-trivial synthetic buildings that are `minimal'  (in a sense to be defined) for groups of Lie type in characteristic $p$ are, in fact, buildings, so that buildings are conjecturally characterized in this way. In Corollary~\ref{Lie-rank-2-uniqueness} we prove that this is the case for groups of Lie rank at most 2. 
\end{example}

To justify the importance of (weak) synthetic buildings we recall some of their properties that are useful in understanding the cohomology and representation theory of finite groups.

\begin{theorem}
\label{structure-theorem-for-chain-complex}
    Let $\Delta$ be a weak synthetic building for a finite group $G$ at a prime $p$ and let $R$ be the $p$-adic integers $\ZZ_p$ or a field of characteristic $p$. Let $\tilde C.(\Delta)$ be the augmented chain complex of $\Delta$ over $R$. Then
    \begin{enumerate}
        \item $\tilde C.(\Delta)$ is chain homotopy equivalent to a finite length complex of finitely generated projective $RG$-modules. Explicitly, $\tilde C.(\Delta)\cong D.\oplus P.$ where $D.$ is a contractible complex, and $P.$ is a finite length complex of finitely generated projective $RG$-modules.
        \item The reduced Lefschetz module $\tilde L(\Delta) =\sum_{i\ge -1} \tilde C_i(\Delta)$ is a virtual projective module in the Green ring of finitely generated $RG$-modules.
    \end{enumerate}
\end{theorem}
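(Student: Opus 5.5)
The plan is to follow the standard argument for Brown's complex (Brown, Quillen, Webb), using only condition (1) in the form $(1')$ of Proposition~\ref{equiv-condition-proposition}. First reduce to a Sylow $p$-subgroup. An $RG$-module is projective if and only if its restriction to a Sylow $p$-subgroup $S$ is projective. Likewise, a bounded complex of $RG$-modules splits as (contractible) $\oplus$ (projectives) whenever it is split-exact in the appropriate relative sense after restriction to $S$. This is because the relative trace (Higman's criterion) lets us average an $S$-equivariant contracting homotopy on the non-projective part into a $G$-equivariant one, since $[G:S]$ is invertible in $R$. So the main work is to treat the case $G=S$ a $p$-group, with $\Delta$ satisfying: $\Delta^Q$ is contractible for every $1\ne Q\le S$.

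For a $p$-group $S$, filter $\tilde C.(\Delta)$ using the singular set $\Delta_{\rm sing}=\bigcup_{1\ne Q\le S}\Delta^Q$, the subcomplex of simplices with non-trivial stabilizer. The relative chain complex $C.(\Delta,\Delta_{\rm sing})$ consists of free $RS$-modules, because the remaining simplices are freely permuted. The subcomplex $\Delta_{\rm sing}$ is covered by the fixed-point subcomplexes $\Delta^Q$ with $Q$ of order $p$. Any intersection $\Delta^{Q_1}\cap\cdots\cap\Delta^{Q_k}=\Delta^{\langle Q_1,\dots,Q_k\rangle}$ is either empty or contractible. It is contractible precisely when $\langle Q_i\rangle$ fixes a point, and then $\langle Q_i\rangle$ is a non-trivial $p$-subgroup. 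So $\Delta_{\rm sing}$ is equivariantly homotopy equivalent to the nerve of this cover. Since $S$ itself is non-trivial, all the $\Delta^Q$ contain the contractible $\Delta^S$, which is non-empty. Hence the nerve is a full simplex and $\Delta_{\rm sing}$ is contractible; indeed it is $S$-contractible by Bredon's criterion, since its $J$-fixed points equal $\Delta^J$ for $J\ne1$ and it is contractible for $J=1$. Therefore $\tilde C.(\Delta_{\rm sing})$ is a contractible complex of $RS$-permutation modules, and it is $S$-equivariantly contractible. It follows that $\tilde C.(\Delta)$ is an extension of the free complex $C.(\Delta,\Delta_{\rm sing})$ by an equivariantly contractible complex. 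The short exact sequence splits degreewise because the quotient terms are free, and this gives $\tilde C.(\Delta)\simeq C.(\Delta,\Delta_{\rm sing})$, a bounded complex of free modules.

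To pass back to $G$ and obtain the explicit splitting $\tilde C.\cong D.\oplus P.$, use the known fact that a bounded complex of finitely generated $RG$-modules is homotopy equivalent to a complex of projectives if and only if its restriction to $S$ is. Then, over $R$ complete local (so Krull–Schmidt holds), any bounded complex decomposes as a direct sum of a contractible complex and a homotopically minimal complex. The minimal complex homotopy equivalent to a projective complex is itself projective, by uniqueness of minimal complexes up to isomorphism. Part (2) follows from (1) by taking the alternating sum: contractible complexes of finitely generated modules over a complete local ring have Lefschetz module zero in the Green ring. This holds because a contractible bounded complex is split exact, so its terms telescope as sums of consecutive kernels.

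I expect the main obstacle to be making the passage from $S$ to $G$ in the Green ring rigorous rather than just up to homotopy. The step I would lean on first is the relative-projectivity averaging argument: a complex whose restriction to $S$ is homotopy equivalent to a projective one has projective minimal model over $G$. This uses that the restriction of a minimal complex is again a sum of a minimal and a contractible complex, and that projectivity is detected on $S$.
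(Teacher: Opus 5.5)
Your proof is correct, but it does not follow the paper, because the paper gives no argument of its own. It cites Webb (1991, Thm.~2.7.1), Bouc and Symonds for (1), and Webb (1987, Thm.~A$'$) for (2). Those cited proofs are algebraic: Webb's goes through Mackey functors, and Bouc's uses Brauer quotients of $p$-permutation complexes, where the Brauer quotient of $\tilde C.(\Delta)$ at $P$ is $\tilde C.(\Delta^P)$. At least Bouc's needs only that the fixed-point sets $\Delta^P$, $P\ne 1$, be mod $p$ acyclic.

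You instead use the full contractibility that weak synthetic buildings provide, which makes the Sylow step purely topological. The singular set $\Delta_{\mathrm{sing}}$ is $S$-contractible by the nerve lemma and Bredon's criterion, the relative chains $C.(\Delta,\Delta_{\mathrm{sing}})$ are free, and the degreewise split sequence gives $\Res^G_S\tilde C.(\Delta)\simeq C.(\Delta,\Delta_{\mathrm{sing}})$. Sylow detection together with Krull--Schmidt minimal complexes then produces the explicit splitting $D.\oplus P.$ and part (2). This is more elementary and self-contained than the cited proofs, at the cost of not covering the case where fixed points are only acyclic.

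The one step that needs tightening is the justification in your last paragraph. Knowing that $\Res^G_S C_{\min}$ is a minimal complex plus a contractible one, with projective minimal part, does not let you apply Sylow detection of projectivity. Nothing controls the terms of the contractible summand.

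What actually does the work is that $C=\tilde C.(\Delta)$ is a direct summand of $\mathrm{Ind}_S^G\Res^G_S C$ as a complex. The chain maps $c\mapsto\sum_{g\in G/S}g\otimes g^{-1}c$ and $g\otimes c\mapsto gc$ compose to multiplication by the unit $[G:S]$. With $\Res^G_S C\simeq F$ free, apply Krull--Schmidt in the category of bounded complexes of finitely generated $RG$-modules. This category is Krull--Schmidt because $R$ is a field or $\mathbb{Z}_p$. It shows that every non-contractible indecomposable summand of $C$ is isomorphic to a summand of the minimal part of $\mathrm{Ind}_S^G F$, so $C_{\min}$ has projective terms. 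This also proves the ``known fact'' you quote, so with this replacement the argument is complete.
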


Part (1) of this theorem was first proved in \cite[Thm. 2.7.1]{Web1991} and subsequently further proofs have been given in \cite[Cor. 7.10]{Bou1998} and \cite[Cor. 6.7]{Sym2005}. Part (2) is a consequence of part (1), but it was first proved earlier as Theorem A$'$ in \cite{Web1987-1} and discussed in \cite{Web1987-2}. When the synthetic building is $\SpCpx{G}{p}$ we call the complex of projectives $P.$ the \textit{Steinberg complex} for $G$ at $p$, and we call the reduced Lefschetz module $\tilde L(\Delta)$ the \textit{generalized Steinberg module} for $G$ at $p$ (although it is only a virtual module). When $G$ is a finite group of Lie type in characteristic $p$ it is the usual Steinberg module up to sign.


As a consequence of the fact that the reduced Lefschetz module is a virtual projective there arises a formula for group cohomology from any weak synthetic building, as was shown in \cite[Thm. A]{Web1987-1}. Such formulas take the following form that we can state with our new terminology. There is also a consequence for the Euler characteristic of $\Delta$ that follows from the fact that the ranks of projective $RG$ modules are always divisible by $|G|_p$, the largest power of $p$ that divides $|G|$. This fact about the Euler characteristic can also be proved in more elementary ways.
\begin{theorem}\label{thm:WebbA}
Let $\Delta$ be a weak synthetic building for $G$ at $p$, and let $M$ be a finitely generated $\ZZ G$-module.
Then
\begin{enumerate}
    \item 
$$\hat{H}^n(G,M)_p=\sum_{\sigma\in G\bs\Delta}(-1)^{\dim\sigma}\hat{H}^n(G_\sigma,M)_p$$
for any $\ZZ G$-module and any $n\in\ZZ$.
\item The Euler characteristic $\chi(\Delta)\equiv 1 \;(\textrm{mod}\; |G|_p)$.
\end{enumerate}
\end{theorem}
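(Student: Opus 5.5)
Both parts will be deduced from Theorem~\ref{structure-theorem-for-chain-complex}(2), which says that the reduced Lefschetz module $\tilde L(\Delta)=\sum_{i\ge -1}(-1)^i\tilde C_i(\Delta)$ is a virtual projective. Over $\ZZ_p$ this means that in the Green ring we can write
$$\sum_{i\ge 0}(-1)^i C_i(\Delta)-\ZZ_p = Q-Q'$$
with $Q,Q'$ projective $\ZZ_pG$-modules. The chain groups are permutation modules. Since $G_\sigma$ fixes $\sigma$ pointwise, there are no orientation characters, and
$$C_i(\Delta)\cong\bigoplus_{\sigma\in G\bs\Delta_i}\ZZ_p\uparrow_{G_\sigma}^G .$$

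For part (2), I would take $\ZZ_p$-ranks of both sides. The rank of $C_i$ is the number of $i$-simplices, so the left side has rank $\chi(\Delta)-1$. A projective $\ZZ_pG$-module restricts to a free module over a Sylow $p$-subgroup, so its rank is divisible by $|G|_p$. Hence $\chi(\Delta)-1\equiv 0 \pmod{|G|_p}$.

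For part (1), the main step is to pass from the Green ring identity to Tate cohomology. First, the $p$-part of Tate cohomology is unchanged by tensoring $M$ with $\ZZ_p$, so we may work with $\ZZ_pG$-modules and replace $M$ by $M\otimes\ZZ_p$. Next, tensor the virtual identity with $M$: since $Q\otimes M$ is projective, we get $\sum_i(-1)^i C_i(\Delta)\otimes M - M$ equal to a virtual projective. The function $X\mapsto\hat H^n(G,X)$ is additive on direct sums and vanishes on projectives. Because the identity holds in the Green ring, it is a genuine isomorphism of modules after moving negative terms across, and additivity then gives
$$\hat H^n(G,M)=\sum_i(-1)^i\hat H^n\bigl(G,C_i(\Delta)\otimes M\bigr)$$
as an identity of formal differences of abelian groups, that is, in the Grothendieck group of finite abelian $p$-groups under direct sum. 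Finally, $\ZZ_p\uparrow_{G_\sigma}^G\otimes M\cong M\uparrow_{G_\sigma}^G$, and Shapiro's lemma for Tate cohomology gives $\hat H^n(G,M\uparrow_{G_\sigma}^G)\cong\hat H^n(G_\sigma,M)$. Summing over orbit representatives $\sigma$ of each dimension gives the stated formula.

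The main obstacle is interpreting ``$=$'' in part (1) correctly. It is an equality in the group of isomorphism classes of finite abelian groups with virtual differences, and one must check that the Green ring relation really does yield an isomorphism of direct sums after rearranging terms, not merely a relation up to extensions. The other point needing care is the reduction from $\ZZ G$-modules to $\ZZ_pG$-modules.
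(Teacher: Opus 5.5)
Your proof is correct and follows the route the paper indicates: both parts come from the virtual projectivity of $\tilde L(\Delta)$ in Theorem~\ref{structure-theorem-for-chain-complex}(2). Part (2) follows by comparing $\ZZ_p$-ranks, and part (1) by the tensor-with-$M$, Tate cohomology and Shapiro argument behind the cited Theorem A of Webb. One small correction: $Q\otimes M$ need not be projective when $M\otimes\ZZ_p$ has $p$-torsion (e.g.\ $M=\FF_p$), but it is a direct summand of a sum of copies of $\ZZ_pG\otimes M$, which is induced from the trivial subgroup, so $\hat H^n(G,Q\otimes M)=0$ still holds, and that vanishing is all your argument actually uses.
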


Here $\hat H$ denotes Tate cohomology, the subscript $p$ on groups denotes the $p$-torsion subgroup and $G_\sigma$ is the stabilizer of the simplex $\sigma$. The formulas hold in the Grothendieck group of finite abelian groups with relations given only by direct sum decompositions. Such a formula determines the group cohomology up to isomorphism. The formulas have been used extensively as a tool in computing the cohomology of specific finite groups, see \cite{Ade2004}. 

There is also a strengthened form of these formulas proved in \cite[Thm. A]{Web1991} where the cohomology groups appear as terms in a contractible complex, allowing computation of the ring structure in cohomology and without the restriction that the coefficient modules be finitely generated. Weak synthetic buildings satisfy the hypothesis of that theorem (on taking
$$
\calX= \hbox{all $p$-subgroups of $G$} \quad\hbox{and}\quad\calY = \hbox{the identity group}
$$
in the notation in force there), so that there are such contractible complexes of cohomology groups $\hat H^n(G_\sigma,M)_p$ for every weak synthetic building. Furthermore, Theorem B of \cite{Web1991} holds similarly when $\Delta$ is any weak synthetic building. There is a consequence of this theorem that is part of Corollary 2.6.1 of \cite{Web1991} that applies to all synthetic buildings, not just  $\Delta(\calS_p(G))$ as stated there. We say that a space is mod $p$ acyclic if its homology with $\FF_p$ coefficients is trivial.

\begin{corollary}\label{cor:modp}
Let $\Delta$ be a synthetic building for $G$ at the prime $p$. Then the orbit space $G\backslash \Delta$  is mod $p$ acyclic.
\end{corollary}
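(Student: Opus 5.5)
The plan is to compute $H_*(G\backslash\Delta;\FF_p)$ using the isotropy spectral sequence, or equivalently the cellular chain complex of the orbit space. The cellular chains of $G\backslash\Delta$ are the coinvariants of the chains of $\Delta$. Since stabilizers fix simplices pointwise, we have
$$C_\cdot(G\backslash\Delta;\FF_p)=\FF_p\otimes_{\FF_pG}C_\cdot(\Delta;\FF_p).$$
The same holds for the augmented complexes, with the $-1$ term $\FF_p$ on both sides. So it suffices to show that $\FF_p\otimes_{\FF_pG}\tilde C_\cdot(\Delta;\FF_p)$ is acyclic.

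The first step is to apply Theorem~\ref{structure-theorem-for-chain-complex}(1) with $R=\FF_p$. This gives $\tilde C_\cdot(\Delta)\cong D_\cdot\oplus P_\cdot$, with $D_\cdot$ contractible and $P_\cdot$ a bounded complex of projective $\FF_pG$-modules. Applying the additive functor $\FF_p\otimes_{\FF_pG}-$ preserves contractibility of $D_\cdot$. The problem therefore reduces to showing $\FF_p\otimes_{\FF_pG}P_\cdot$ is acyclic.

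The second step is to notice that $P_\cdot$ is itself acyclic, because $\tilde C_\cdot(\Delta)$ is. For this I would argue that $\Delta$ is mod $p$ acyclic when $p$ divides $|G|$. Take a Sylow subgroup $S\neq 1$. Then $\Delta^S$ is contractible, so $\chi(\Delta^S)=1$ and $\Delta^S\ne\emptyset$. Restricting to $S$, condition (1) together with Bredon's criterion, as in the proof of Proposition~\ref{equiv-condition-proposition}, should make $\Delta$ equivariantly contractible as an $S$-complex. This needs control of $\Delta^1=\Delta$ itself, which is the delicate point.

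An alternative, which I consider cleaner, is to use Smith theory on each fixed point set. An acyclic bounded complex of projectives over $\FF_pG$ is split, hence contractible, so its coinvariants are acyclic. Thus the essential claim is that $\tilde H_*(\Delta;\FF_p)=0$. I would get this from Smith theory: $\Delta^S$ is mod $p$ acyclic for the $p$-group $S$, and Smith theory propagates acyclicity upward through a chain of normal subgroups of index $p$ only in the wrong direction, so I would instead use the structure of $P_\cdot$ directly. The homology of $P_\cdot$ is that of $\tilde C_\cdot(\Delta)$. Its restriction to $S$ is a complex of free modules whose Tate hypercohomology is detected by Brauer quotients at nonidentity subgroups. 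Those quotients are $\tilde C_\cdot(\Delta^Q)$, which are acyclic. Hence $P_\cdot$ restricted to $S$ is contractible, and so $P_\cdot$ is contractible.

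The main obstacle is making this detection step rigorous. Condition (2) should enter here. Since every stabilizer has order divisible by $p$, the orbit chain complex is governed by the $p$-local structure, and the trivial-stabilizer orbits that could carry free homology are absent. I expect to need the argument of \cite[Cor.~2.6.1]{Web1991}: express $H_*(G\backslash\Delta;\FF_p)$ via the isotropy spectral sequence $E^1_{s,t}=\bigoplus_{\sigma\in G\backslash\Delta_s}H_t(G_\sigma;\FF_p)$, converging to $H_*^G(\Delta)$. Then compare with the contractible complex of cohomology groups from \cite[Thm.~A]{Web1991}, which makes the rows $t>0$ exact in a way compatible with the row $t=0$. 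Condition (2) guarantees that every $G_\sigma$ has a nonzero transfer-compatible term in this comparison. This is how I would deduce that row $0$, namely $\tilde C_\cdot(G\backslash\Delta;\FF_p)$, is exact.
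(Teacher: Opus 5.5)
Your first reduction is sound: coinvariants of $D_\cdot$ are contractible, so everything hinges on $\FF_p\otimes_{\FF_pG}P_\cdot$. The gap is the claim you then rely on, namely that $\tilde H_*(\Delta;\FF_p)=0$ and hence $P_\cdot$ is acyclic or contractible. This is false.

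Take $G=S_3$, $p=2$ and $\Delta=\SpCpx{S_3}{2}$, which is three points with stabilizer $C_2$. Then $\tilde H_0(\Delta;\FF_2)\cong\FF_2^2$. Here $P_\cdot$ is the $2$-dimensional simple projective module in degree $0$, which is not acyclic, even though $\FF_2\otimes_{\FF_2G}P_\cdot=0$. More generally, a contractible $P_\cdot$ would force $\tilde L(\Delta)=0$, contradicting the nonzero Lefschetz modules in the paper (for instance the Steinberg module for groups of Lie type).

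Your Brauer-quotient step fails for a structural reason. Brauer quotients of projective modules at nonidentity $p$-subgroups vanish. So acyclicity of $\tilde C_\cdot(\Delta^Q)$ for $Q\ne 1$ is exactly what yields Theorem~\ref{structure-theorem-for-chain-complex}, and it carries no information about contractibility. Contractibility would also need $Q=1$, which is the same missing input as $\Delta^1=\Delta$ in your Bredon argument.

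A useful check: nothing in your first two steps uses condition (2), yet the statement fails for weak synthetic buildings. The paper's example $(\Delta_1*\Delta_2)\downarrow_{\delta(S_3)}^{S_3\times S_3}$ has a circle as orbit space. What you actually need is that $\FF_p\otimes_{\FF_pG}P_\cdot$ is acyclic even though $P_\cdot$ is not, and that is where (2) has to enter.

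Your last paragraph cites the right reference but does not give a working mechanism. In the isotropy spectral sequence, Theorem A of \cite{Web1991} does make the rows $t\ge 1$ exact, because $H_t(-;\FF_p)$ vanishes at the trivial group for $t\ge1$. The row $t=0$, however, is built from $H_0(-;\FF_p)$, which does not vanish there, so Theorem A says nothing about it. The abutment does not rescue you either: knowing that $H^G_*(\Delta;\FF_p)\to H_*(G;\FF_p)$ is an isomorphism is equivalent to the statement being proved.

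The missing idea is to replace ordinary cohomology by Tate cohomology in degree zero. Consider the functor $H\mapsto\hat H^0(H;\FF_p)=\FF_p/|H|\FF_p$, whose restriction maps are the identity (dually, $\hat H^{-1}(H;\FF_p)$, whose transfer maps are the identity). It is a cohomological Mackey functor, hence projective relative to $p$-subgroups, and it vanishes at $1$. So Theorem A, with $\calX$ the $p$-subgroups and $\calY=\{1\}$, shows that the augmented complex
\[
0\to\hat H^0(G;\FF_p)\to\bigoplus_{\sigma\in G\backslash\Delta_0}\hat H^0(G_\sigma;\FF_p)\to\bigoplus_{\sigma\in G\backslash\Delta_1}\hat H^0(G_\sigma;\FF_p)\to\cdots
\]
formed with restriction maps is split exact. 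Condition (2) says precisely that every term is $\FF_p$ with all maps the identity. This complex is therefore the augmented $\FF_p$-cochain complex of $G\backslash\Delta$, which is consequently acyclic.

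This is the argument of \cite[Cor.~2.6.1]{Web1991} that the paper invokes (see Corollary~\ref{orbit-space-mod-p-acyclic-corollary}). Without (2), the orbits with $p'$-stabilizers simply drop out of this complex, which is why weak synthetic buildings are not covered.
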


We present a refined version of this corollary in Section~\ref{properties-section}. This result in the case of $\Delta(\calS_p(G))$ prompted a conjecture that $G\backslash\Delta(\calS_p(G))$ should always be contractible, and this was proved by Symonds~\cite{Sym1998}, with subsequent proofs given, for example, in \cite{Bux1999, Gro2023, Ste2023}. In our final section we make an analogous conjecture for synthetic buildings in general.

Every weak synthetic building gives a cohomology formula, but some of these formulas are more useful than others. If a weak synthetic building fails to be a synthetic building because it has a simplex whose stabilizer has order not divisible by $p$, that simplex will contribute zero to the cohomology formula because the $p$-torsion in the cohomology of the stabilizer is zero. In formulas coming from synthetic buildings there may still be excess terms that cancel, giving rise to a smaller formula. This will happen if the synthetic building has a subcomplex that is also a synthetic building, because that subcomplex will also produce a cohomology formula using a subset of the terms of the larger formula. We thus focus on synthetic buildings without such subcomplexes.

\begin{definition}
We say that a synthetic building for $G$ at the prime $p$ is \textit{minimal} if it has no invariant subcomplex that is also a synthetic building for $G$ at $p$.
\end{definition}

Evidently every synthetic building contains a subcomplex that is a minimal synthetic building, and it may contain several, of different homotopy types (consider the cone of a non-trivial minimal synthetic building, which has both the non-trivial and the trivial synthetic buildings as subspaces). 
We note that the $p$-subgroups complex $\SpCpx{G}{p}$ usually is not minimal: it has as subcomplexes $\ApCpx{G}{p}$ and $\BpCpx{G}{p}$, and those also need not be minimal. The determination and properties of the equivariant homotopy types of minimal synthetic buildings is one of the main objects of this work.


While the focus on minimal synthetic buildings does not restrict the essentially different cohomology formulas that can be found, it does restrict the possibilities for the reduced Lefschetz module that arise. If $\Delta$ is a non-minimal synthetic building, its reduced Lefschetz module may be distinct from those of minimal synthetic buildings. There are many examples of this phenomenon. For instance, we will see later (Example~\ref{a5-families-example} and elsewhere) that at $p=2$ the alternating group $A_5$ has only two minimal synthetic buildings, but has infinitely many other synthetic buildings, some of which have distinct reduced Lefschetz modules.

In the same spirit of restricting the equivariant homotopy types of (weak) synthetic buildings that may arise, we make the next definition:
\begin{definition}
\label{slender-orbit-diagram-definitions}
We say that a $G$-simplicial complex $\Delta$ is \textit{slender} if  the orbit space $G\bs\Delta$ is contractible. We also define an \textit{orbit diagram} for $\Delta$ to be a picture of $G\backslash\Delta$ with the orbits of simplices labeled by representative orbit stabilizers.
\end{definition}

This additional property is chosen for a couple of reasons. First off, it is possessed by every $p$-subgroup complex $\SpCpx{G}{p}$ by a theorem of Symonds \cite{Sym1998}, and this is a key example.  Second, it allows us to exclude certain trivialities that would produce infinitely many uninteresting spaces. For example, let $G=C_2$ and $p=2$, and suppose $\Delta$ consists of two copies of any $G$-simplicial complex $\Omega$, identified at a base point. We let $G$ act on $\Delta$ by swapping the two copies of $\Omega$ and fixing the base point, so that $\Delta$ is a weak synthetic building. Then the orbit space $G\bs\Delta$ is homeomorphic to $\Omega$, which could have arbitrary structure as a simplicial complex. Thus weak synthetic buildings can be unnecessarily complicated without some further condition, such as slenderness.


\section{Further examples of synthetic buildings}

So far, all the examples presented of synthetic buildings have had the equivariant homotopy type of a single point or the $p$-subgroups complex $\SpCpx{G}{p}$ (which is empty in case $p$ does not divide $|G|$).

\begin{definition}
    We will say that a synthetic building for $G$ at $p$ is \textit{exotic} if it does not have the equivariant homotopy type of either a single point or the $p$-subgroups complex $\SpCpx{G}{p}$.
\end{definition}

We will show in later sections how to construct many examples of exotic synthetic buildings. We will see that it is possible to find groups with arbitrarily large (finite) numbers of minimal synthetic buildings (Example~\ref{arbitrary-minimal-example}), and also minimal non-trivial synthetic buildings of different dimensions (Example~\ref{two-dimensions-example}). We will also show how to construct infinitely many synthetic buildings for a single group, all of bounded dimension 1, but these will not be minimal (Theorem~\ref{thm:infiniteGgraphs}). We are particularly concerned with describing the minimal exotic synthetic buildings.

For our first two examples we review work of 
Ryba, Smith and Yoshiara~\cite{Ryb1990} and subsequently Smith and Yoshiara~\cite{Smi1997} who identified a number of exotic synthetic buildings for finite simple groups. 
    Their combined work surveys geometries of the kind that arose in the wake of the classification of finite simple groups, from the perspective of fixed-point contractibility. They typically only verified contractibility of fixed points under $p$-subgroups, but the extension to subgroups $H$ with $O_p(H)\ne 1$ is straightforward. We point out which of these examples are minimal, and in the minimal case we show for these examples that they are homomorphic images of Brown's complex.

\begin{example}\label{ex:c3}
    Among the examples surveyed by Ryba, Smith and Yoshiara is the $C_3$ geometry of Neumaier~\cite{Neu1984} for the alternating group $A_7$. This is the incidence geometry of three types of elements called `points', `lines', and `planes'. The set of points is the set of 7 symbols permuted by $A_7$. The lines are the $\binom{7}{3}=35$ subsets of points of size 3. The planes are one of the two orbits under $A_7$ of Fano projective planes consisting of 7 points and 7 lines. This geometry turns out to be a 2-dimensional slender synthetic building for $A_7$ at $p=2$ with orbit diagram:
    \begin{center}
        \begin{tikzpicture}
    \fill (0,0) circle (0.1)
        node[left=1pt] {$A_6$};
    \fill (2,0) circle (0.1)
        node[right=1pt] {$(C_3\times C_3)\rtimes D_8$};
   \fill (1,1.7) circle (0.1)
        node[above=1pt] {$GL(3,2)$};
    \draw (0,0)--(2,0)
        node[below=1pt,pos=0.5] {$S_4$};
   \draw (0,0)--(1,1.7)
        node[left=1pt,pos=0.5] {$S_4$};
   \draw (2,0)--(1,1.7)
        node[right=1pt,pos=0.5] {$S_4$};
\node at (1,0.5) {$D_8$};
\end{tikzpicture}
\end{center}
The three copies of $S_4$ shown are non-conjugate, and represent three of the four conjugacy classes of subgroups $S_4$ of $A_7$. Its reduced Lefschetz module is $\tilde L(\Delta) = +P_{20}$, the projective cover of the 20-dimensional simple $\FF_2A_7$-module.
        
They also identify another slender synthetic building for $A_7$ at $p=2$. This is a residue geometry of the Neumaier geometry and is explicitly constructed as the incidence graph of seven points permuted by $A_7$ with the subsets of those points of size 3, so it is a subgraph of the Neumaier geometry. It has orbit diagram
\begin{center}
\begin{tikzpicture}
    \fill (0,0) circle (0.1)
        node[left=1pt] {$A_6$};
    \fill (2,0) circle (0.1)
        node[right=1pt] {$(C_3\times C_3)\rtimes D_8$};
    \draw (0,0)--(2,0)
        node[below=1pt,pos=0.5] {$S_4$};
\end{tikzpicture}
\end{center}
This means the Neumaier geometry is not minimal as a synthetic building, although the residue geometry is: it is a graph with two orbits of vertices, neither of which forms a synthetic building by itself. Its reduced Lefschetz module is $\tilde L(\Delta) = -P_{14}$. Neither of these geometries is homotopy equivalent to $\SpCpx{A_7}{2}$ which was determined in \cite{Web1987-1} and is equivariantly homotopy equivalent to a graph with orbit diagram
    \begin{center}
\begin{tikzpicture}
    \fill (0,0) circle (0.1)
        node[left=1pt] {$S_4$};
    \fill (2,0) circle (0.1)
        node[right=1pt] {$(C_3\times C_3)\rtimes D_8$};
    \draw (0,0)--(2,0)
        node[below=1pt,pos=0.5] {$D_8$};
\end{tikzpicture}
\end{center}
Here the vertex stabilizers are the normalizers of the two conjugacy classes of subgroups of $A_7$ isomorphic to $C_2\times C_2$. The $S_4$ stabilizers for this graph and the previous one are not conjugate, and the Euler characteristics of the two graphs are distinct, so they are not homotopy equivalent. Its reduced Lefschetz module is $\tilde L(\Delta) = -(2P_{20}+P_{14})$.

It means that the incidence graph of points and 3-subsets is an exotic minimal synthetic building for $A_7$. It happens to be a homomorphic image of $\SpCpx{A_7}{2}$ by a homomorphism that is the identity on the vertex orbit with stabilizer $(C_3\times C_3)\rtimes D_8$, on the other vertex orbit is a mapping $A_7/S_4 \to A_7/A_6$ and on the edges is a mapping $A_7/D_8\to A_7/S_4$. 

To be more specific about this mapping, we can always realize a graph with orbit diagram 
\begin{center}
\begin{tikzpicture}
    \fill (0,0) circle (0.1)
        node[left=1pt] {$J$};
    \fill (2,0) circle (0.1)
        node[right=1pt] {$K$};
    \draw (0,0)--(2,0)
        node[below=1pt,pos=0.5] {$H$};
\end{tikzpicture}
\end{center}
as having vertices the cosets of $J$ and of $K$, with the edges being the cosets of $H$, an edge having vertices the cosets of $J$ and $K$ in which it is contained. Given a second such graph with stabilizers $J_1, K_1, H_1$ with $J\subseteq J_1$, $K\subseteq K_1$, and $H\subseteq H_1$, the mapping of graphs is determined by sending each coset of $J$ to the coset of $J_1$ that contains it, and similarly with the other subgroups.
\end{example}

\begin{example}
Something similar happens with the Mathieu group $M_{11}$.
Smith and Yoshiara~\cite{Smi1997} identify two geometries for $M_{11}$ that have orbit diagrams as follows.
\begin{center}
\begin{tikzpicture}
    \fill (0,0) circle (0.1)
        node[left=1pt] {$S_4$};
    \fill (2,0) circle (0.1)
        node[right=1pt] {$GL(2,3)$};
    \draw (0,0)--(2,0)
        node[below=1pt,pos=0.5] {$D_8$};
\end{tikzpicture}
\hskip.5in
\begin{tikzpicture}
    \fill (0,0) circle (0.1)
        node[left=1pt] {$M_{10}$};
    \fill (2,0) circle (0.1)
        node[right=1pt] {$GL(2,3)$};
    \draw (0,0)--(2,0)
        node[below=1pt,pos=0.5] {$SD_{16}$};
\end{tikzpicture}
\end{center}
Both of these are synthetic buildings, and the first of them is $\Delta(\calS_2(M_{11}))$, identified in \cite{Web1987-1}, with reduced Lefschetz module
$$\tilde L(\Delta(\calS_2(M_{11})))=-(3P_{44}+2P_{16}+2P_{\overline{16}}).
$$
The vertices in the first geometry can be taken to be the subgroups $C_2$ and $C_2\times C_2$, with an edge indicating containment of subgroups. The second geometry is an exotic minimal synthetic building, and its vertices can be taken to be the 11 points on which $M_{11}$ acts, and the subsets of those points of size 3. Computation shows that stabilizers can be chosen that are subgroups $S_4$ and $GL(2,3)$ of $M_{11}$, with $S_4$ a subgroup of $M_{10}$, so that $S_4\cap GL(2,3)$ is a subgroup $D_8$ and $M_{10}\cap GL(2,3)$ is a subgroup $SD_{16}$, and so these graphs may be realized as the coset graphs of these triples of subgroups. It follows that there is a surjective morphism of $G$-simplicial complexes from the first graph to the second, so that the second is a quotient of the first. The reduced Lefschetz module of the second complex is
$$\tilde L(\Delta)=-(2P_{44}+P_{16}+P_{\overline{16}}).$$
    \end{example}

\begin{example}
We mention some examples of spaces that are not weak synthetic buildings, but which illustrate why we choose to make the fixed point condition in the definition of a weak synthetic building so strong. At one point in this work we considered whether we should require simply that $\Delta^P$ be contractible for each nonidentity $p$-subgroup $P\le G$, in the definition of a weak synthetic building, and not equivariantly contractible for $N_G(P)$. That weaker condition would allow the complexes we now describe. We are grateful to Kevin Piterman for pointing out this construction.

 Let $H$ be any finite group acting on a compact contractible, but not H-contractible, complex $\Delta$. Such spaces are constructed, for example, in \cite{Oli1976}. Take a $p$-group $Q$ where $p$ does not divide the order of $H$, and let $Q$ act on $\Delta$ trivially. Let $G= Q\times H$ and let $G$ act on $\Delta$ by extending the actions of $H$ and $Q$. Then $\Delta$ satisfies that condition that $\Delta^P$ is contractible for each nonidentity $p$-subgroup $P\le G$, but it is not a synthetic building because $\Delta^Q$ is not $N_G(Q)$-contractible, and it is not $G$-homotopic to $\SpCpx{G}{p}$ (which is G-contractible). If complexes such as this were allowed to be synthetic buildings, they would be exotic, but they are not really of interest to us for applications to group cohomology and representation theory. We exclude them by our definition of synthetic building.
\end{example}


\section{Some properties of synthetic buildings}
\label{properties-section}

We start with some very basic observations about synthetic buildings. Observe first that if $\Delta_1$ is a weak synthetic building and $\Delta_1\simeq_G\Delta_2$ then $\Delta_2$ is also a weak synthetic building, and if $\Delta_1$ is slender then so is $\Delta_2$. On the other hand it is possible that $\Delta_1$ is a synthetic building, but $\Delta_2$ is not. As an example of this phenomenon consider the group of order 2 acting on a graph
\begin{tikzpicture}
    \fill (0,0) circle (0.1);
    \fill (1,0) circle (0.1);
    \fill (2,0) circle (0.1);
    \draw (0,0)--(2,0);
\end{tikzpicture}
by interchanging the two extreme vertices and fixing the middle vertex. This is a weak synthetic building with orbit diagram
    \begin{tikzpicture}
    \fill (0,0) circle (0.1)
        node[above=1pt] {$1$};
    \fill (2,0) circle (0.1)
        node[above=1pt] {$C_2$};
    \draw (0,0)--(2,0)
        node[above=1pt,pos=0.5] {$1$};
\end{tikzpicture}
but not a synthetic building. It is equivariantly homotopy equivalent to the synthetic building consisting of a single vertex and orbit diagram
    \begin{tikzpicture}
    \fill (0,0) circle (0.1)
        node[above=1pt] {$C_2$};
\end{tikzpicture}
We see from this example also that (weak) synthetic buildings can be described up to equivariant homotopy equivalence by different orbit diagrams, with different stabilizer subgroups.

A modification of this example where a cyclic group $C_4$ acts on the same graph with the the $C_2$ subgroup acting trivially, with orbit diagram     \begin{tikzpicture}
    \fill (0,0) circle (0.1)
        node[above=1pt] {$C_2$};
    \fill (2,0) circle (0.1)
        node[above=1pt] {$C_4$};
    \draw (0,0)--(2,0)
        node[above=1pt,pos=0.5] {$C_2$};
\end{tikzpicture},
is a synthetic building and it is equivariantly homotopy equivalent to the trivial synthetic building with a single vertex and orbit diagram
    \begin{tikzpicture}
    \fill (0,0) circle (0.1)
        node[above=1pt] {$C_4$};
\end{tikzpicture}.
 This shows that if two synthetic buildings are equivariantly homotopy equivalent, one might be minimal and the other not. In classifying minimal synthetic buildings up to equivariant homotopy equivalence, it will probably not be the case that every complex in the same equivariant homotopy equivalence class is also a minimal synthetic building.

We next explore the relationship between a synthetic building and its barycentric subdivision. Given a simplicial complex $\Delta$ we take its \textit{barycentric subdivision} $\sd \Delta$ to be the order complex of the poset of simplices of $\Delta$, ordered by inclusion. Thus the simplices of $\Delta$ are the vertices of $\sd\Delta$ and the (non-degenerate) $n$-simplices of $\sd\Delta$ are the sets of simplices $\{\sigma_0,\ldots,\sigma_n\}$ of $\Delta$ that are totally ordered under inclusions. Thus, perhaps after re-ordering, $\sigma_0\subset\ldots \subset\sigma_n$ is a proper chain of simplices of $\Delta$. Evidently if $\Delta$ is a $G$-simplicial complex then so is $\sd\Delta$. Part of the point of the next result is that we do not have to trouble ourselves by considering barycentric subdivisions in our basic definitions.

\begin{proposition}
\label{subdivision-proposition}
    Let $\Delta$ be a $G$-simplicial complex.
    \begin{enumerate}
        \item $\Delta$ is a weak synthetic building if and only if $\sd\Delta$ is a weak synthetic building.
        \item $\Delta$ is a synthetic building if and only if $\sd\Delta$ is a synthetic building.
 \item $\Delta$ is a minimal synthetic building if and only if $\sd\Delta$ is a minimal synthetic building.
    \end{enumerate}
\end{proposition}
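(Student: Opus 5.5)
The plan is to prove the three parts in order. The first two follow from basic properties of barycentric subdivision, and the third needs a separate comparison of invariant subcomplexes.

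\emph{Part (1).} The geometric realizations $|\sd\Delta|$ and $|\Delta|$ are $G$-equivariantly homeomorphic via the standard map sending the vertex $\sigma$ of $\sd\Delta$ to the barycenter of $\sigma$. The hypothesis that setwise simplex stabilizers fix simplices pointwise is what makes this map $G$-equivariant and compatible with fixed points. Concretely, for any $H\le G$ the fixed points satisfy $(\sd\Delta)^H=\sd(\Delta^H)$. Indeed, a vertex $\sigma$ of $\sd\Delta$ is fixed by $H$ exactly when $H$ stabilizes $\sigma$ setwise. By the pointwise hypothesis, this happens exactly when $\sigma$ is a simplex of $\Delta^H$, and chains are fixed exactly when each member is fixed. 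Hence $\Delta^H\simeq\bullet$ if and only if $(\sd\Delta)^H\simeq\bullet$, and condition (1$'$) of Proposition~\ref{equiv-condition-proposition} transfers in both directions.

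\emph{Part (2).} I would compute stabilizers in $\sd\Delta$. A simplex of $\sd\Delta$ is a chain $\sigma_0\subset\cdots\subset\sigma_n$, and its stabilizer is $\bigcap_i G_{\sigma_i}$. The group $G_{\sigma_n}$ fixes $\sigma_n$ pointwise, so it fixes every face of $\sigma_n$; hence $G_{\sigma_n}\le G_{\sigma_i}$ for all $i$. The stabilizer of the chain is therefore exactly $G_{\sigma_n}$. So the set of simplex stabilizers of $\sd\Delta$ coincides with the set of simplex stabilizers of $\Delta$, and condition (2) transfers in both directions. Together with part (1), this gives part (2).

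\emph{Part (3).} One direction is easy. If $\Gamma\subsetneq\Delta$ is an invariant subcomplex that is a synthetic building, then $\sd\Gamma\subsetneq\sd\Delta$ is invariant and is a synthetic building by part (2). So minimality of $\sd\Delta$ implies minimality of $\Delta$.

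For the converse, suppose $\Omega\subsetneq\sd\Delta$ is an invariant subcomplex that is a synthetic building. I must produce a proper invariant subcomplex of $\Delta$ that is a synthetic building. The natural candidate is $\Gamma=\{\sigma\in\Delta : \sd\bar\sigma\subseteq\Omega\}$, where $\bar\sigma$ is the closed simplex. This is an invariant subcomplex of $\Delta$, and it is proper because $\Omega\ne\sd\Delta$. Its stabilizers have order divisible by $p$ by part (2). The key remaining step is to show $\Gamma^H\simeq\bullet$ whenever $O_p(H)\ne1$, given $\Omega^H\simeq\bullet$.

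I would attempt this by comparing $\Omega^H$ with $\sd(\Gamma^H)$ through a Quillen fiber argument on the poset of vertices of $\Omega^H$. For each vertex $\tau$ of $\Omega^H$, I would consider the subcomplex of $\sd(\Gamma^H)$ spanned by the faces of $\tau$ lying in $\Gamma$. The aim is to show this subcomplex is a cone, and hence that $\Omega^H$ deformation retracts $H$-equivariantly onto something homotopy equivalent to $\Gamma^H$. This is the step I expect to be the main obstacle, since an arbitrary subcomplex of $\sd\Delta$ need not be a subdivision. If the fiber argument fails, the fallback is a different candidate: take $\Gamma'$ to be the image of $\Omega$ under the "top simplex" map. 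That map sends a chain to the closure of its largest member, and one would check that $\Gamma'$ is proper and inherits the fixed-point contractibility of $\Omega$.
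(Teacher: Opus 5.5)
\textbf{Verdict.} Your parts (1) and (2), and the easy half of (3), are correct and follow the paper's argument. The two key facts are $(\sd\Delta)^H=\sd(\Delta^H)$ and that the stabilizer of a chain equals the stabilizer of its top simplex. The gap is in the converse half of (3). It goes beyond the unfinished fiber argument: neither of your candidate subcomplexes works.

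\textbf{Counterexample to both candidates.} Take $G=S_3$ and $p=2$. Let $\Delta$ be the cone over the three points $x_1,x_2,x_3$ of $G/C_2$, with apex $y$ fixed by $G$. This is a synthetic building. The three edge midpoints $m_1,m_2,m_3$ form a proper invariant subcomplex $\Omega\subset\sd\Delta$, and $\Omega$ is a synthetic building because $C_2$ is strongly $2$-embedded in $S_3$.
\begin{enumerate}
\item Your $\Gamma=\{\sigma:\sd\bar\sigma\subseteq\Omega\}$ is empty, so it is not a synthetic building. Correspondingly, the fiber over $\tau=m_i$ in your Quillen argument is empty, not a cone.
\item The \emph{top simplex} image $\Gamma'$ is all of $\Delta$, so it is not proper.
\end{enumerate}
The proper sub-buildings that do exist are $\{y\}$ and $\{x_1,x_2,x_3\}$. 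Each arises by sending every $m_i$ to one of its endpoints, that is, by pushing $\Omega$ forward along a simplicial map $\sd\Delta\to\Delta$. Taking a largest or smallest subcomplex of $\Delta$ compatible with $\Omega$ does not produce them.

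\textbf{The missing idea, and the paper's route.} The paper uses exactly this push-forward.
\begin{enumerate}
\item Since the stabilizer of a simplex fixes it pointwise, one can choose a vertex $g(\sigma)\in\sigma$ of every simplex $G$-equivariantly. The values on the members of any one flag can be prescribed freely, because those members lie in different orbits. This gives a $G$-map $g:\sd\Delta\to\Delta$ that is a simplicial approximation of the identity.
\item A maximal simplex $\sigma$ of $\Delta$ is the image of exactly one simplex of $\sd\Delta$. That simplex is the flag obtained from $\sigma$ by repeatedly deleting $g$ of the current face.
\item A suitable choice of $g$ makes any prescribed maximal simplex of $\sd\Delta$ this unique preimage.
\item Hence, if $\Delta$ is minimal and every $g(\Theta)$ is a sub-synthetic building, then $g(\Theta)=\Delta$ for all $g$. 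So $\Theta$ contains every maximal simplex of $\sd\Delta$.
\end{enumerate}

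\textbf{A caution about the paper's step.} The paper justifies $g(\Theta)^H\simeq\bullet$ only by saying that $g$ is an equivariant homotopy equivalence, and that is not sufficient. Let $C_p$ act trivially on a $2$-simplex $abc$ and use the least-vertex map for $a<b<c$. It sends the contractible path $b,ab,abc,ac,c,bc$ in $\sd\Delta$ onto the boundary of the triangle. So on either route, the fixed-point condition for the subcomplex of $\Delta$ extracted from $\Theta$ is the step that needs a real argument.

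\textbf{Dimension $1$.} In this case your $\Gamma$ can be rescued.
\begin{enumerate}
\item Suppose some midpoint is isolated in $\Omega$. By Theorem~\ref{SPES-theorem}, its stabilizer is strongly $p$-embedded or all of $G$. So is every overgroup, and hence the orbit of either endpoint is already a proper sub-building.
\item Otherwise, equivariantly collapse the midpoints that meet only one half-edge. This retracts $\Omega$ onto $\sd\Gamma$.
\end{enumerate}
In higher dimensions no such collapse is available in general, and your proposal does not yet supply a substitute.
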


\begin{proof}
    (1)  For each subgroup $H\le G$ we have $(\sd\Delta)^H=\sd(\Delta^H)$ so that one side is contractible if and only if so is the other.

(2) If the stabilizers of all simplices of $\sd\Delta$ have order divisible by $p$ then the same is true for $\Delta$, because the simplices of $\Delta$ are the vertices of $\sd\Delta$. On the other hand if the stabilizers of all simplices of $\sd\Delta$ have order divisible by $p$, because the stabilizer of a chain $\sigma_0\subset\ldots \subset\sigma_n$ equals the stabilizer of the largest simplex $\sigma_n$, the stabilizers of these chains also have order divisible by $p$.

       (3) We observe first the straightforward statement that if $\sd\Delta$ is minimal then so is $\Delta$. To see this, if $\Theta$ is a subcomplex of $\Delta$ that is a synthetic building then $\sd\Theta$ is a sub-synthetic building of $\sd\Delta$, which equals $\sd\Delta$ if $\sd\Delta$ is minimal. Thus if $\sd\Delta$ is minimal it follows that $\Theta=\Delta$ and $\Delta$ is minimal.

       To show that if $\Delta$ is minimal then so is $\sd\Delta$ is more elaborate. The condition that, for every simplex $\sigma$ of $\Delta$, the stabilizer $G_\sigma$ fixes $\sigma$ pointwise means that if we fix any $ \sigma$ and put its vertices in any total order, this can be extended to a partial order on the vertices of $\Delta$ so that it becomes a $G$-equivariant ordered simplicial complex, meaning that the vertices of each simplex are totally ordered, and the ordering is G-equivariant. This can be done by extending a relation $u<v$ on the vertices of $\sigma$ to $gu<gv$ on the vertices of $g\sigma$, and the stabilizer condition means this is well defined. We repeat this process with vertices of other simplices that have not already been put in order, to obtained an ordered simplicial complex, with the ordering equivariant for $G$.

Given any total ordering on the vertices of each simplex we obtain a simplicial map $g: \sd \Delta \to\Delta$ given by 
$$
g(\sigma_0\subset\cdots\subset\sigma_n) = \{\hbox{init}(\sigma_i)\bigm|0\le i\le n\}
$$
where $\hbox{init}(\sigma_i)$ is the least vertex of the simplex $\sigma_i$ of $\Delta$. Now $g$ is a homotopy equivalence, and if the ordering is G-equivariant so is the mapping $g$.

Suppose that $\Theta$ is a sub-synthetic building of $\sd \Delta$. We claim that $g(\Theta)$ is a sub-synthetic building of $\Delta$ for each of the mappings $g$. The fixed point condition $g(\Theta)^H\simeq \bullet$ for all subgroups $H$ with $O_p(H)\ne 1$  is satisfied because  $g$  is an equivariant homotopy equivalence. The stabilizers of simplices of $g(\Theta)$ have order divisible by $p$ because the same is true for $\Delta$. If we assume that  $\Delta$ is minimal then $g(\Theta) = \Delta$ for each of these mappings $g$. 

For each maximal simplex $\{x_0<x_1<\cdots <x_d\}$ of $\Delta$ there is a unique simplex of $\sd \Delta$ that maps to it under $g$, namely
$$\{(x_d) \subset (x_{d-1}< x_d)\subset\cdots\subset (x_1<\cdots <x_d)\subset (x_0<x_1<\cdots <x_d)\}.$$
Because $g$ surjects $\Theta$ onto $\Delta$, this maximal simplex of $\sd\Delta$ must belong to $\Theta$. Now, any maximal simplex of $\sd\Delta$ can be obtained in this way, by choosing a suitable ordering of the vertices of the simplex, and it follows that every maximal simplex of $\sd\Delta$ belongs to $\Theta$. From this we deduce $\sd\Delta=\Theta$ and $\sd\Delta$ is minimal.
\end{proof}

We now show that for $p$-groups the only synthetic buildings are trivial and we also characterize groups with $O_p(G)\ne 1$.

\begin{theorem}
\label{p-group-theorem}
Let $G$ be a finite group.
\begin{enumerate}
    \item If $G$ is a $p$-group then every synthetic building for $G$ at the prime $p$ is equivariantly contractible.
    \item $O_p(G)\ne 1$ if and only if every minimal synthetic building at the prime $p$ is equivariantly contractible.
\end{enumerate}
\end{theorem}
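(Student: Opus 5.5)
The plan throughout is to verify equivariant contractibility with the criterion already used in the proof of Proposition~\ref{equiv-condition-proposition}. That criterion says a $K$-complex $\Delta$ is $K$-contractible if and only if $\Delta^J$ is contractible for every $J\le K$. Condition $(1')$ already gives contractibility of $\Delta^H$ whenever $O_p(H)\ne 1$. So in each case the work is either to handle the remaining subgroups, or to pass to a suitable subcomplex.

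For (1), let $G$ be a $p$-group. I assume $G\ne 1$; if $G=1$ the only synthetic building is the empty complex, and that case has to be excluded or noted separately. Every nontrivial $H\le G$ has $O_p(H)=H\ne 1$, so $\Delta^H\simeq\bullet$ by $(1')$. The only remaining case is $H=1$, i.e.\ that $\Delta$ itself is contractible, and this is the main step. I would cover $\Delta$ by the subcomplexes $\Delta^Q$, where $Q$ runs over the subgroups of order $p$. Each $\Delta^Q$ is a subcomplex because setwise stabilizers fix simplices pointwise. The sets $\Delta^Q$ cover $\Delta$ because, by condition (2), each simplex stabilizer $G_\sigma$ is a nontrivial $p$-group and so contains some $Q$ of order $p$. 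Any finite intersection $\Delta^{Q_1}\cap\cdots\cap\Delta^{Q_k}$ equals $\Delta^{\langle Q_1,\dots,Q_k\rangle}$. The subgroup $\langle Q_1,\dots,Q_k\rangle$ is a nontrivial $p$-subgroup, so this intersection is nonempty and contractible by condition (1). The nerve theorem then gives that $\Delta$ is homotopy equivalent to the nerve of the cover. That nerve is a full simplex, so $\Delta$ is contractible, and Bredon's criterion finishes (1).

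For the forward direction of (2), let $P=O_p(G)\ne 1$ and let $\Delta$ be a minimal synthetic building. Since $N_G(P)=G$, condition (1) says $\Delta^P$ is a $G$-contractible invariant subcomplex. I then check that $\Delta^P$ is itself a synthetic building. Every simplex of $\Delta^P$ has stabilizer containing $P$, so its order is divisible by $p$. For any $H$, we have $(\Delta^P)^H=\Delta^{PH}$, which is contractible because $\Delta^P$ is $G$-contractible; this gives $(1')$. By minimality $\Delta=\Delta^P$, so $\Delta$ is $G$-contractible.

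For the converse of (2), suppose every minimal synthetic building is equivariantly contractible. If $p\nmid |G|$, the empty complex is the only synthetic building and it is not contractible, so $p$ divides $|G|$. Choose a minimal synthetic building $\Theta$ inside $\SpCpx{G}{p}$. By hypothesis $\Theta$ is $G$-contractible, so $\Theta^G\ne\emptyset$. A $G$-fixed simplex is a chain of nonidentity $p$-subgroups whose stabilizer $G$ fixes it pointwise. Each subgroup in that chain is therefore a nontrivial normal $p$-subgroup of $G$, and so $O_p(G)\ne 1$.

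The main obstacle is the nerve-theorem step in (1). There one must check that the cover by fixed subcomplexes is legitimate, which relies on simplices being fixed pointwise by their stabilizers, and that all finite intersections are nonempty and contractible.
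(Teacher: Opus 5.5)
Your proof is correct and follows the paper's route. You use Bredon's fixed-point criterion for (1), minimality applied to a $G$-fixed invariant subcomplex for the forward direction of (2), and a minimal synthetic building inside $\SpCpx{G}{p}$ for the converse; the paper shrinks to a single $G$-fixed vertex rather than to $\Delta^{O_p(G)}$, which shows a minimal building is then literally a point, but that difference is immaterial. The one place you do more than the paper is the case $J=1$ in (1): the paper simply asserts that $\Delta$ itself is contractible, which genuinely needs condition (2) (compare the paper's non-contractible weak synthetic building for $C_2$), and your nerve-theorem argument with the cover $\{\Delta^Q\}$ cleanly supplies it. Your caveat that $G=1$ must be excluded is also right.
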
 

\begin{proof} (1) This can be proved several ways. Perhaps the simplest approach is to observe that if $\Delta$ is a synthetic building for $G$ at $p$ then the mapping $\Delta\to\bullet$ is a $G$-homotopy equivalence, because for every subgroup  $J\le G$ the restriction $\Delta^J\to \bullet$ is an ordinary homotopy equivalence, and we can apply the theorem of Bredon \cite[Sect. II]{Bre1967}.

(2) Under the assumption that $O_p(G)\ne 1$, $\Delta^{O_p(G)}$ is contractible equivariantly for the action of $G = N_G(O_p(G))$, so that $\Delta$ has a point fixed by $G$. This fixed point is a subcomplex that is a synthetic building for $G$, so because $\Delta$ is minimal it equals this fixed point.

Conversely, if every minimal synthetic building building at $p$ is equivariantly contractible, consider a minimal $G$-subcomplex of $\SpCpx{G}{p}$. This has a fixed point under $G$, which is a normal $p$-subgroup. Thus $O_p(G)\ne 1$. 
\end{proof}

We continue with some observations about the orbit complex $G\backslash\Delta$ of a synthetic building. The following is a consequence of  \cite[Thm. A]{Web1991} using the argument of \cite[Cor. 2.6.1]{Web1991}.

\begin{corollary}
\label{orbit-space-mod-p-acyclic-corollary}
Let $\Delta$ be a synthetic building for $G$ at the prime $p$, and let $\calY$ be a non-empty set of $p$-subgroups of $G$, closed under taking subgroups and conjugation. Suppose $\calY$ does not contain a Sylow $p$-subgroup of $G$ and put 
$$
\Delta_\calY = \{\sigma \in \Delta\bigm| \hbox{Sylow $p$-subgroups of } G_\sigma \hbox{ do not lie in } \calY\}.
$$
Then $G\backslash \Delta_\calY$ is mod $p$ acyclic.
\end{corollary}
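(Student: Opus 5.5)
The plan is to follow the argument of \cite[Cor.~2.6.1]{Web1991}. I would apply a suitable additive functor to the augmented chain complex $\tilde C.(\Delta)$ over $\FF_p$, and then use the decomposition $\tilde C.(\Delta)\cong D.\oplus P.$ from Theorem~\ref{structure-theorem-for-chain-complex}.

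First I would check that $\Delta_\calY$ is a $G$-invariant subcomplex. It is clearly invariant. Suppose $\sigma\subseteq\tau$ with $\tau\in\Delta_\calY$. Then $G_\tau\le G_\sigma$, so a Sylow $p$-subgroup of $G_\sigma$ contains a conjugate of a Sylow $p$-subgroup of $G_\tau$. Since $\calY$ is closed under subgroups and conjugation, the Sylow $p$-subgroup of $G_\sigma$ is not in $\calY$, so $\sigma\in\Delta_\calY$. The empty simplex has stabilizer $G$, whose Sylow subgroup is not in $\calY$ by hypothesis. So the augmented complex $\tilde C.(G\backslash\Delta_\calY;\FF_p)$ makes sense with $\FF_p$ in degree $-1$.

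Next I would introduce the functor on $\FF_pG$-modules
$$F(M)=M^G\Big/\sum_{Q\in\calY}\operatorname{tr}_Q^G(M^Q).$$
This is additive and takes chain homotopies to chain homotopies, so it sends contractible complexes to contractible complexes. Because $1\in\calY$ ($\calY$ is non-empty and subgroup-closed), $F$ vanishes on free modules, hence on projective modules.

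The key computation, which I expect to be the main obstacle, is to evaluate $F$ on permutation modules and on the maps between them.

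1. For $M=\FF_p[G/K]$, the space $M^G$ is spanned by the orbit sum $s_K$. A relative trace $\operatorname{tr}_Q^G$ of a $Q$-fixed vector is a combination of $Q$-orbit sums pushed around by $G$. Working with the $K$-orbits on $G/Q$ (Mackey), I would show that $s_K$ lies in the image exactly when some $Q\in\calY$ has $|K:K\cap{}^gQ|$ prime to $p$ for some $g$, that is, when a Sylow $p$-subgroup of $K$ lies in $\calY$.
2. Hence $F(\FF_p[G/K])\cong\FF_p$ if the Sylow $p$-subgroups of $K$ are not in $\calY$, and $F(\FF_p[G/K])=0$ otherwise.
3. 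For $K\le L$, the natural map $\FF_p[G/K]\to\FF_p[G/L]$ sends $s_K$ to $|L:K|\,s_L$. When both terms survive, $K$ and $L$ have the same Sylow $p$-subgroup up to conjugacy, because a $p$-subgroup of $L$ containing a Sylow of $K$ that is not in $\calY$ cannot be in $\calY$ either. So $|L:K|$ is prime to $p$ and the induced map is an isomorphism. With scaling conventions it matches the face maps of the orbit complex.

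The orientation signs in the boundary maps of $\tilde C.(\Delta)$ carry over unchanged, because stabilizers fix simplices pointwise. Putting this together, $F(\tilde C.(\Delta))\cong\tilde C.(G\backslash\Delta_\calY;\FF_p)$, possibly after rescaling basis elements by units.

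Finally, applying $F$ to $\tilde C.(\Delta)\cong D.\oplus P.$ gives $F(D.)\oplus F(P.)$. Here $F(P.)=0$ since $P.$ consists of projectives, and $F(D.)$ is contractible. Hence $\tilde C.(G\backslash\Delta_\calY;\FF_p)$ is contractible, i.e.\ $G\backslash\Delta_\calY$ is mod $p$ acyclic.

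Condition (2) of a synthetic building is not logically needed here. It only ensures that $\Delta_{\{1\}}$ is all of $\Delta$, which recovers Corollary~\ref{cor:modp} in the case $\calY=\{1\}$.
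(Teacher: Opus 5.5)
Your steps 1 and 2 are correct, but step 3 is false. It is exactly where the claimed identification $F(\tilde C.(\Delta))\cong\tilde C.(G\backslash\Delta_\calY;\FF_p)$ breaks down. If $K\le L$ and the Sylow $p$-subgroups of both lie outside $\calY$, nothing forces $|L:K|$ to be prime to $p$; your argument only shows that if $K$ survives then so does $L$.

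For example, take $\calY=\{1\}$, $G$ of order $p^2$ with a subgroup $C_p$, and $\Delta$ the cone over the orbit $G/C_p$ with a $G$-fixed apex $v$. This is a synthetic building and every simplex survives. The component of the boundary map from the edge orbit $\FF_p[G/C_p]$ to $\FF_p v=\FF_p[G/G]$ is the augmentation, which sends $s_{C_p}$ to $p\,v=0$. In the chain complex of the interval $G\backslash\Delta$, however, this incidence number is $\pm1$.

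In general, $F(\tilde C.(\Delta))$ is a complex on the orbits of simplices of $\Delta_\calY$ whose incidence numbers are weighted by the indices $|G_{\sigma_i}:G_\sigma|$ of stabilizers of faces. No rescaling by units can turn the resulting zero entries into non-zero ones. So your argument proves that this index-weighted complex is acyclic, which is not the statement.

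The strategy itself is sound, and it is repaired by working with restrictions instead of transfers. Dualize first: $\Hom_{\FF_p}(\tilde C.(\Delta),\FF_p)\cong D.^*\oplus P.^*$ is again a contractible complex plus a complex of projectives, since duals of projective $\FF_pG$-modules are projective. The dual of $\FF_p[G/K]\to\FF_p[G/L]$, $gK\mapsto gL$, is $gL\mapsto\sum_{xK\subseteq gL}xK$, which sends $s_L$ to $s_K$. Hence $F$ applied to the dual complex is precisely the augmented cochain complex of $G\backslash\Delta_\calY$ over $\FF_p$. Its contractibility gives $\tilde H^*(G\backslash\Delta_\calY;\FF_p)=0$, which over a field is equivalent to mod $p$ acyclicity. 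Equivalently, you can apply to $\tilde C.(\Delta)$ itself the functor $M\mapsto\bigcap_{Q\in\calY}\ker(M_G\to M_Q)$ given by the norm maps on coinvariants.

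With this change your argument becomes a module-level version of the paper's proof. The paper instead applies Theorem A of \cite{Web1991} directly to the subcomplex $\Delta_\calY$, after observing that $\Delta_\calY^H=\Delta^H$ is contractible for every $p$-subgroup $H\notin\calY$. It then follows the argument of \cite[Cor.~2.6.1]{Web1991}.
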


Taking $\calY=\{1\}$ is no extra condition when $\Delta$ is a synthetic building and shows that $G\backslash\Delta$ is always mod $p$ acyclic in this case.

\begin{proof}
Observe that $\Delta_\calY$ is a subcomplex of $\Delta$. In the notation of  \cite[Cor. 2.6.1]{Web1991} we let $\calX= \{ \hbox{all $p$-subgroups of }G\}$. To be able to apply \cite[Thm. A]{Web1991} we show that if $H\in\calX - \calY$ then $\Delta_\calY^H$ is contractible. This is so because $\Delta_\calY^H = \Delta^H$, which is contractible. To see this, we evidently have $\subseteq$, and if $\sigma\in\Delta^H$ then $H\subseteq G_\sigma$ and $H$ is a $p$-group, so $\sigma\in\Delta_\calY$, giving $\supseteq$. The argument now proceeds in the same way as  \cite[Cor. 2.6.1]{Web1991}.
\end{proof}

The following is a consequence of Corollary~\ref{orbit-space-mod-p-acyclic-corollary} but we give a simpler direct argument.

\begin{proposition}\label{prop:sb_ctd_orbit_space}
    If $\Delta$ is a synthetic building, then $G\bs\Delta$ is connected.
\end{proposition}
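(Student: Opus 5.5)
The aim is a direct argument that uses only the fixed-point condition for Sylow subgroups together with condition (2), avoiding the mod $p$ homology machinery behind Corollary~\ref{orbit-space-mod-p-acyclic-corollary}. Fix a Sylow $p$-subgroup $S$ of $G$; since $\Delta$ is a synthetic building it is non-empty when $p$ divides $|G|$. (If $p\notdivide|G|$ then $\Delta$ is empty; I will note this degenerate case separately, depending on the convention for connectedness of the empty space.) By condition (1), $\Delta^S$ is contractible, in particular non-empty and connected. So the image of $\Delta^S$ in $G\bs\Delta$ is a non-empty connected subset $C$. It suffices to show that every point of $G\bs\Delta$ can be joined by a path to $C$.

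Since $G\bs\Delta$ is the image of $\Delta$, and every point of $\Delta$ lies in a closed simplex containing a vertex, it is enough to show the following: for every vertex $v$ of $\Delta$, some $G$-translate of $v$ lies in the same path component of $\Delta$ as a point of $\Delta^S$. By condition (2), $G_v$ has order divisible by $p$, so it contains a non-identity $p$-subgroup $Q\le G_v$, for instance a subgroup of order $p$. Then $v\in\Delta^Q$. By condition (1), $\Delta^Q$ is contractible, hence connected. Choose $g\in G$ with $Q^g\le S$. Then $\Delta^{Q^g}=g^{-1}\Delta^Q$ (or $g\Delta^Q$, depending on the convention) is connected. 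It contains $g^{-1}v$, and it also contains $\Delta^S\subseteq\Delta^{Q^g}$, which is non-empty. Hence $g^{-1}v$ is joined to a point of $\Delta^S$ by a path inside $\Delta^{Q^g}$. Projecting this path to $G\bs\Delta$ joins the image of $v$ to $C$, so $G\bs\Delta$ is path connected.

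The only delicate point is the bookkeeping of conjugation, namely that $\Delta^{xQx^{-1}}=x\Delta^Q$ and that $\Delta^S\subseteq\Delta^{Q'}$ whenever $Q'\le S$. Both are immediate. Note that condition (2) is used essentially: the $C_2$ example in Section~\ref{properties-section} with a free orbit shows that a weak synthetic building can fail this argument at vertices with stabilizer of order prime to $p$, although there the orbit space is still connected.
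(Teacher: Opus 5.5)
Your argument is correct, and it takes a genuinely different route from the paper's.

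The paper argues by contradiction using Euler characteristics. It writes $\Delta=\Theta_1\sqcup\cdots\sqcup\Theta_d$, where the $\Theta_i$ are the preimages of the components of $G\backslash\Delta$. A non-identity $p$-subgroup $P$ has $\Delta^P$ inside a single $\Theta_i$. So for $j\ne i$ every $P$-orbit on $\Theta_j$ has size divisible by $p$, giving $p\mid\chi(\Theta_j)$. On the other hand, a $p$-subgroup $Q$ fixing a simplex of $\Theta_j$ gives $\chi(\Theta_j)\equiv\chi(\Theta_j^Q)=\chi(\Delta^Q)=1 \pmod p$, a contradiction.

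You instead use only Sylow conjugacy and path-connectedness. Pushing a $p$-subgroup of each vertex stabilizer into a fixed Sylow subgroup $S$ places a translate of every vertex in the component of $\Delta$ containing $\Delta^S$.

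Your route has two advantages:
\begin{enumerate}
\item It is more elementary and uses less of condition (1). You need only that each $\Delta^Q$ is connected and that $\Delta^S\neq\emptyset$. The paper additionally needs $\chi(\Delta^Q)\not\equiv 0 \pmod p$, which it gets from contractibility.
\item It proves directly the sharper fact that $G$ permutes the connected components of $\Delta$ transitively. This is exactly what is invoked in the proof of Theorem~\ref{SPES-theorem}(1).
\end{enumerate}

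The paper's argument, in exchange, stays within the mod $p$ fixed-point counting (the congruence $\chi(X^Q)\equiv\chi(X) \pmod p$) that runs through its other results on synthetic buildings. Both arguments set aside the degenerate empty case $p\notdivide|G|$, as you note.
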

\begin{proof}
    Write $G\bs\Delta=G\bs\Theta_1\sqcup\cdots\sqcup G\bs\Theta_d$ as the connected components of $G\bs\Delta$, where $\Delta=\Theta_1\sqcup\cdots\sqcup\Theta_d$ (the $\Theta_i$ need not be connected). If $P$ is a non-identity $p$-subgroup of $G$, then $\Delta^P\simeq\bullet$ so that $\Delta^P\subseteq\Theta_i$ for some $i\leq d$. This means $\Theta_j^P=\emptyset$ if $i\ne j$, so every orbit of $P$ acting on $\Theta_j$ has size divisible by $p$, implying $p\mid \chi(\Theta_j)$. However, since $\Delta$ is a synthetic building, if $\sigma\in\Theta_j$ then $p\mid|\Stab(\sigma)|$. Thus there exists a non-identity $p$-subgroup $Q\leq G$ such that $\Theta_j^Q\simeq\bullet$ and it follows that $\chi(\Theta_j^Q)=1$. We have that $\chi(\Theta_j^Q)\equiv \chi(\Theta_j)\pmod{p}$, but this is impossible because $p\mid \chi(\Theta_j)$. Thus $i\ne j$ cannot happen, forcing $d=1$.
\end{proof}



\section{0-dimensional synthetic buildings and strongly $p$-embedded subgroups}

In this section we characterize the non-trivial synthetic buildings of dimension 0, showing that they arise precisely when the group has a strongly $p$-embedded subgroup. Such groups can be found with arbitrarily many minimal synthetic buildings of dimension 0, and also with minimal synthetic buildings of different dimensions. In a later section we will construct infinitely many (non-minimal) connected, exotic, synthetic buildings of the same dimension for a group with a strongly $p$-embedded subgroup.

\begin{definition}
    Let $H$ be a proper subgroup of $G$. Then $H$ is \textit{strongly $p$-embedded} in $G$ if and only if $p\mid |H|$ and $p\notdivide |H\cap {}^gH|$ whenever $g\not\in H$.
\end{definition}

It was shown by Quillen in \cite{Qui1978} that $\SpCpx{G}{p}$ is disconnected if and only if $G$ has a strongly $p$-embedded subgroup. We now extend this result.

\begin{theorem}
\label{SPES-theorem}
\begin{enumerate}
    \item If a group $G$ has a disconnected synthetic building then the setwise stabilizer of a component is a strongly $p$-embedded subgroup, and that component is a synthetic building for the stabilizer.
    \item If a group $G$ has a strongly $p$-embedded subgroup $K$ and $\Delta$ is a synthetic building for $K$, then $\Delta\uparrow_K^G$, defined to be the disjoint union of copies of $\Delta$ indexed by the cosets $G/K$, and with $G$-action given by the action on induced $G$-sets $(K/H)\uparrow_K^G\cong G/H$, is a synthetic building for $G$.
\end{enumerate}
\end{theorem}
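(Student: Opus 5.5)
For part (1), I would take a disconnected synthetic building $\Delta$ for $G$ and write it as the disjoint union of its connected components $\Delta_1,\ldots,\Delta_d$ with $d\ge 2$. By Proposition~\ref{prop:sb_ctd_orbit_space}, $G\bs\Delta$ is connected, so $G$ permutes the components transitively. Let $K$ be the setwise stabilizer of $\Delta_1$; then $K$ is proper because $d\ge2$.

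To see that $p$ divides $|K|$, take any simplex $\sigma$ of $\Delta_1$. Its stabilizer $G_\sigma$ has order divisible by $p$ and stabilizes the component containing $\sigma$, so $G_\sigma\le K$.

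For the strong $p$-embedding, let $g\notin K$ and suppose $P\le K\cap{}^gK$ has order $p$. Then $P$ stabilizes both $\Delta_1$ and $g\Delta_1$, and these are distinct components. Consider $P$ acting on the finite complex $\Delta_1$ with $\Delta_1^P$ empty. Every $P$-orbit of simplices would have size $p$, so $p\mid\chi(\Delta_1)$. I would then argue that $\Delta_1^P$ is in fact nonempty. The cleanest route is to run the argument of Proposition~\ref{prop:sb_ctd_orbit_space} with $K$ in place of $G$: $\Delta_1$ contains a simplex whose stabilizer contains some nonidentity $p$-subgroup $Q$, and $\Delta^Q$ is contractible and meets $\Delta_1$, so $\Delta^Q\subseteq\Delta_1$. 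This gives $\chi(\Delta_1)\equiv\chi(\Delta_1^Q)=1\pmod p$. Hence $\Delta_1^P\ne\emptyset$, and by the same reasoning $(g\Delta_1)^P\ne\emptyset$. So $\Delta^P$ meets two components, contradicting the connectedness of the contractible space $\Delta^P$.

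It remains to show $\Delta_1$ is a synthetic building for $K$. Simplex stabilizers in $K$ equal those in $G$, since they lie in $K$ anyway, so condition (2) holds. For condition ($1'$), take $H\le K$ with $O_p(H)\ne1$. Then $\Delta^H$ is contractible and nonempty. I would check that it lies in $\Delta_1$: $\Delta^H\subseteq\Delta^{O_p(H)}$, which is connected and meets $\Delta_1$ by the Euler characteristic argument above applied to $O_p(H)\le K$. Hence $\Delta_1^H=\Delta^H$ is contractible.

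For part (2), let $\Delta$ be a synthetic building for $K$ and set $\Gamma=\Delta\uparrow_K^G=\bigsqcup_{gK}g\Delta$. Stabilizers of simplices in $g\Delta$ are conjugates ${}^gK_\sigma$, so condition (2) holds. For ($1'$), take $H\le G$ with $O_p(H)\ne1$ and set $P=O_p(H)$. Since $K$ is strongly $p$-embedded, the copies $g\Delta$ with $P\le{}^gK$ form exactly one coset. Indeed, two distinct such cosets would put $P$ inside ${}^{g}K\cap{}^{g'}K$, which is a conjugate of $K\cap{}^{g^{-1}g'}K$, a $p'$-group. There is at least one such coset because a nonidentity $p$-subgroup fixing a point of $\Gamma$ lies in some ${}^gK$.

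This existence step is the main obstacle. I would use the standard fact that every nonidentity $p$-subgroup of $G$ is contained in a conjugate of $K$, which follows from $K$ containing a Sylow $p$-subgroup of $G$. This in turn holds because $N_G(S)\le K$ for $S$ a Sylow subgroup of $K$, by the strong $p$-embedding.

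Given this, $H$ normalizes $P$, so $H$ permutes the unique copy fixed by $P$ and hence $H\le{}^gK$. Then $\Gamma^H=(g\Delta)^H\cong\Delta^{H^g}$ with $O_p(H^g)\ne1$, which is contractible.
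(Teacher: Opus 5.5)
Your proof is correct and follows the paper's outline. You get transitivity on components from Proposition~\ref{prop:sb_ctd_orbit_space} and use an element of order $p$ in $K\cap{}^gK$ to produce a disconnected fixed-point set. In part (2) you use uniqueness of the conjugate ${}^gK$ containing $O_p(H)$, which forces $H\le{}^gK$. Your Euler characteristic argument ($\chi(\Delta_1)\equiv 1 \pmod p$) supplies the non-emptiness of $\Delta_1^P$ and $(g\Delta_1)^P$, which the paper's disconnectedness claim uses without comment; likewise your proof that $K$ contains a Sylow $p$-subgroup fills a step the paper takes for granted. One step differs: to get $\Delta^H\subseteq\Delta_1$ you reuse the Euler characteristic argument, whereas the paper uses the strong $p$-embedding just established ($\Delta^H$ lies in some $x\Delta_1$, so $H\le K\cap{}^xK$, forcing $x\in K$).
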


\begin{proof}
    (1) Let $\Delta$ be a disconnected synthetic building for $G$ at $p$ and let $\Delta_0$ be a connected component, with set-wise stabilizer $H$. By Proposition~\ref{prop:sb_ctd_orbit_space} the orbit space $G\backslash \Delta$ is connected, so there is a single orbit of connected components under the action of $G$, and they are exactly the  complexes $x\Delta_0$, which are all distinct, as $x$ ranges through coset representatives of $H$ in $G$. If $x\not\in H$ then $H\cap{}^xH$ stabilizes both $\Delta_0$ and $x\Delta_0\ne\Delta_0$, so that if $g\in H\cap{}^xH$ has order $p$ then $\Delta^{\langle g\rangle}\supseteq \Delta_0^{\langle g\rangle} \cup x\Delta_0^{\langle g\rangle}$ which implies that $\Delta^{\langle g\rangle}$ is disconnected. This is not possible because such fixed points are required to be contractible, so $H\cap{}^xH$ has no element of order $p$, and $H$ is a strongly $p$-embedded subgroup.

    To show that $\Delta_0$ is a synthetic building for $H$: if $\sigma\in\Delta_0$ then $G_\sigma = H_\sigma$ and $p\bigm| |G_\sigma$ so $p\bigm| |H_\sigma$ also. Finally, if $K\le H$ with $O_p(K)\ne 1$ then $\Delta^K\subseteq x\Delta_0^K$ for some $x$, by connectivity, so $K\subseteq H\cap{}^xH$. But this group contains no elements of order $p$ unles $x\in H$, so $\Delta^K=\Delta_0^K$ is contractible.

    (2) Suppose that $\Delta$ is a synthetic building for the strongly $p$-embedded subgroup $K$. If $H$ is a $p$-subgroup of $G$ then $H\subseteq {}^xK$ where the coset $xK$ of $K$ in $G$ is uniquely determined, and $(\Delta\uparrow_K^G)^H\subseteq x\Delta$, which is a synthetic building for ${}^xK$. Thus $(\Delta\uparrow_K^G)^H= (x\Delta)^H$. Also $N_G(H) = N_{{}^xK}(H)$ and so $(x\Delta)^H$ is equivariantly contractible for $N_G(H)$, because $x\Delta$ is a synthetic building for ${}^xK$. Finally if $\sigma$ is a simplex of $x\Delta$ then the stabilizer $G_\sigma \supseteq ({}^xK)_\sigma$, which has order divisible by $p$.
\end{proof}

\begin{corollary}
\label{SPES-corollary}
    A group $G$ has a non-trivial synthetic building of dimension $0$ for the prime $p$ if and only if $G$ has a strongly $p$-embedded subgroup.
\end{corollary}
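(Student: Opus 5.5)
We prove the two implications separately, using Theorem~\ref{SPES-theorem} in each direction.

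Suppose first that $\Delta$ is a non-trivial synthetic building of dimension $0$ for $G$ at $p$. Then $\Delta$ is a finite $G$-set, and it is non-empty because $\Delta^P$ must be contractible for any non-identity $p$-subgroup. Here we assume $p$ divides $|G|$; otherwise the only synthetic building is empty, which we regard as excluded by non-triviality, or else we handle it separately as the degenerate case. If $\Delta$ were a single point it would be the trivial synthetic building, so $\Delta$ has at least two points and is disconnected. Part (1) of Theorem~\ref{SPES-theorem} then says that the setwise stabilizer of a component is strongly $p$-embedded. Concretely, that stabilizer is the stabilizer $G_v$ of a point $v$, and it is a proper subgroup because, by Proposition~\ref{prop:sb_ctd_orbit_space}, $G$ acts transitively on the points.

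For the converse, let $K$ be a strongly $p$-embedded subgroup. We take $\Delta$ to be the one-point synthetic building for $K$, which is valid since $p$ divides $|K|$. By part (2) of Theorem~\ref{SPES-theorem}, the induced complex $\Delta\uparrow_K^G = G/K$, viewed as a discrete $G$-set, is a synthetic building for $G$.

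It remains to check that $G/K$ is non-trivial, that is, not $G$-homotopy equivalent to a point. Since $K$ is proper, $G/K$ has at least two points and so is not contractible. Hence it is not even non-equivariantly homotopy equivalent to a point, and it is certainly not equivariantly so.

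The only delicate point is the convention for ``non-trivial'': it means not of the equivariant homotopy type of a point. A $0$-dimensional complex with more than one point is disconnected, hence not contractible, and so it is automatically non-trivial. No other obstacle arises, because both implications follow directly from Theorem~\ref{SPES-theorem}.
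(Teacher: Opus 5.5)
Your proof is correct and is exactly the unpacking of the paper's one-line argument that the corollary is immediate from Theorem~\ref{SPES-theorem}: part (1) applies to a $0$-dimensional building with at least two points, with properness of the stabilizer coming from transitivity via Proposition~\ref{prop:sb_ctd_orbit_space}, and part (2) applies to the one-point building for $K$, giving $G/K$. Your hedge about the case where $p$ does not divide $|G|$ is unnecessary: a $0$-dimensional complex has a vertex, and condition (2) of the definition then forces $p$ to divide $|G|$.
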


\begin{proof}
    This is immediate from Theorem~\ref{SPES-theorem}.
\end{proof}

\begin{example}
\label{arbitrary-minimal-example}
We construct groups with arbitrarily many minimal synthetic buildings. Our construction depends upon finding  groups with arbitrarily long chains of strongly $p$-embedded subgroups. 

For example, for each $i$ let the cyclic group $C_2$ act on an elementary abelian $3$-subgroup $L_i=C_3^i$ by inverting every element, and let $G_i=L_i\rtimes C_2$. Then $C_2$ is self-normalizing in $G_i$ and $\SpCpx{G_i}{2}$ is the disjoint union of $3^i$ points in a single orbit with stabilizer $C_2$, and it is a synthetic building for $G_i$. We can regard each $G_i$, $0\le i\le d-1$, as a homomorphic image of $G_{i+1}$ in such a way that we have a chain of surjective homomorphisms
    $$
    G_d\to G_{d-1}\to \cdots\to G_1\to G_0 = C_2.
    $$
Let $K_i$ be the kernel of the composite homomorphism $G_d\to G_i$. If we inflate $\SpCpx{G_i}{2}$ through the odd order group $K_i$, letting $K_i$ act trivially, we get a minimal synthetic building for $G_d$. It consists of a single orbit of $3^i$ points with stabilizer $K_i\rtimes C_2$ and is minimal because it is a single orbit. By this means we see that $G_d$ has $d+1$ minimal synthetic buildings that are not homotopy equivalent, and $d-1$ of which are exotic. They are all images of $\SpCpx{G_d}{2}$. Evidently $d$ can be made arbitrarily large.

The minimal synthetic buildings we have just constructed for this group are, in fact,  a complete list, as we now show.

    \begin{theorem}
        The minimal synthetic buildings at $p=2$ for the group $G_d=C_3^d\rtimes C_2$ are precisely the sets of points $G/(H\rtimes C_2)$ where $H\le C_3^d$. There are $d+1$ homotopically distinct minimal synthetic buildings.
    \end{theorem}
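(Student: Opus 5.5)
The plan is to reduce both conditions of Definition~\ref{synthetic-building-definition} to statements about involutions, and then show that a minimal synthetic building must be a single orbit of points. Write $G=G_d=L\rtimes\langle t\rangle$ with $L=C_3^d$ and $t$ inverting $L$. First I record the subgroup structure.
\begin{enumerate}
\item Every involution of $G$ has the form $xt$ with $x\in L$. Since $t$ fixes no non-identity element of $L$, we get $C_G(t)=N_G(\langle t\rangle)=\langle t\rangle$.
\item The $3^d$ involutions are all conjugate. Indeed ${}^k t = k^2t$ for $k\in L$, and squaring is a bijection on the odd order group $L$.
\item Every subgroup containing $t$ has the form $H\rtimes\langle t\rangle$ with $H=K\cap L\le L$.
\item If $O_2(K)\ne 1$, then $K$ has a normal subgroup $\langle s\rangle$ of order $2$. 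Then $s$ centralizes $K\cap L$, while $s$ inverts it, so $K\cap L=1$ and $K=\langle s\rangle$.
\end{enumerate}
Hence by Proposition~\ref{equiv-condition-proposition}, condition (1) for a $G$-complex $\Delta$ says exactly that $\Delta^s$ is contractible for every involution $s$. Condition (2) says exactly that every simplex stabilizer contains an involution.

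Next I show that each $\Omega_H:=G/(H\rtimes\langle t\rangle)$, for $H\le L$, is a minimal synthetic building. Condition (2) is clear. For condition (1), I count the points fixed by an involution $s$. The number of cosets $gG_v$ with $g^{-1}sg\in G_v=H\rtimes\langle t\rangle$ equals
\[
\frac{|C_G(s)|\cdot\#\{\text{involutions in }G_v\}}{|G_v|}=\frac{2|H|}{2|H|}=1 .
\]
Here I use that the involutions of $G_v$ are the $|H|$ elements $ht$, all $G$-conjugate to $s$. So $\Omega_H^s$ is a single point, hence contractible. Minimality holds because $\Omega_H$ is a single orbit, so its only proper invariant subcomplex is empty, and the empty complex is not a synthetic building since $2$ divides $|G|$.

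Conversely, let $\Delta$ be a minimal synthetic building. Since $\Delta^t$ is contractible it is non-empty, so there is a vertex $v$ fixed by $t$. Then $G_v=H\rtimes\langle t\rangle$ with $H=G_v\cap L$, and the orbit $Gv\cong\Omega_H$ is an invariant $0$-dimensional subcomplex. By the previous paragraph $Gv$ is a synthetic building, so minimality forces $\Delta=Gv$.

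Finally, for the count, two discrete $G$-sets are $G$-homotopy equivalent only if they are isomorphic. Moreover $\Omega_H^L=\emptyset$ unless $H=L$, and more generally $|\Omega_H|=3^{d-|H|_3}$, where $|H|_3$ denotes the exponent of $3$ in $|H|$. So the $\Omega_H$ with $|H|=3^i$ for $i=0,\dots,d$ are pairwise distinct, which gives $d+1$ homotopically distinct minimal synthetic buildings.

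The step I expect to be delicate is the statement about how many are distinct. Different subgroups $H$ of the same order give non-isomorphic $G$-sets (their stabilizers are non-conjugate, since $L$ is abelian and $H$ is normal). So "$d+1$" must be read up to automorphisms of $G$, i.e.\ classified by $|H|$. I would state explicitly that $\operatorname{Aut}(G)\supseteq GL(d,3)$ acts transitively on the subgroups of $L$ of each order, which is what makes the count $d+1$.
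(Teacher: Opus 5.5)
Your proof is correct. Its core is exactly the paper's argument: a vertex fixed by $t$ has stabilizer $H\rtimes C_2$, that orbit is already a synthetic building, and minimality forces equality.

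The real difference is how you show that each $\Omega_H$ is a synthetic building.
\begin{itemize}
\item The paper inflates Brown's complex $\Delta(\mathcal{S}_2(G_j))$, a single orbit of $3^j$ points, from the quotient $G_d/H\cong G_j$ through the odd-order normal subgroup $H$. It writes this only for the kernels $K_i$, but every $H\le C_3^d$ is normal in $G_d$ with such a quotient, so all $H$ are covered. This route also exhibits these buildings as images of $\Delta(\mathcal{S}_2(G_d))$.
\item You instead show that the subgroups with $O_2\ne 1$ are exactly the $\langle s\rangle$, and count fixed points by $|C_G(s)|\,|s^G\cap K|/|K|$. This is self-contained and treats every $H$ uniformly. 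It does not rely on Brown's complex being a synthetic building, nor on inflation through odd-order kernels preserving the definition.
\end{itemize}

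Your remark on the count is also right. Every $H\le C_3^d$ is normal in $G_d$, so $\Omega_{H_1}\simeq_G\Omega_{H_2}$ forces $H_1=H_2$. Hence there is one $G$-homotopy class for each subgroup of $C_3^d$, which gives six when $d=2$. The paper's ``$d+1$'' (and ``$d-1$ exotic'' in the preceding example) is correct only as a count of ordinary homotopy types, equivalently up to automorphisms of $G_d$, not of $G$-homotopy types.

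Under the ordinary-homotopy reading, the cardinality $|\Omega_H|=3^d/|H|$ already does the job. So your $\mathrm{Aut}(G)$ argument is optional, though it gives the same partition.

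One notational point: do not write $|H|_3$ for the exponent of $3$. The paper uses $|G|_p$ for the $p$-part of $|G|$, so writing $|\Omega_H|=3^d/|H|$ is cleaner.
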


\begin{proof}
    We have just seen that these sets of points are minimal synthetic buildings. If $\Delta$ is a synthetic building for $G_d$ then $\Delta^{C_2}\simeq \bullet$ so there is a vertex $u\in\Delta$ with stabilizer containing $C_2$. The subgroups that contain $C_2$ are all of the form $H\rtimes C_2$ where $H\le C_3^d$, so $\Delta$ contains an orbit of vertices $G/(H\rtimes C_2)$. Because this orbit is a synthetic building and $\Delta$ is minimal, they must be equal.
\end{proof}
\end{example} 

\begin{example}
\label{two-dimensions-example}
We show that a group can have minimal non-trivial synthetic buildings of different dimensions.
    When $p$ is an odd prime the symmetric group $S_{2p}$ has a strongly $p$-embedded subgroup
    $$S_p\wr C_2=\langle (1,2,\ldots,p), (1,p+1)(2,p+2)\cdots(p,2p)\rangle$$ and the $p$-subgroups complex $\SpCpx{S_{2p}}{p}$ is the disjoint union of $\frac{(2p)!}{2(p!)^2}$ copies of $\SpCpx{S_p\times S_p}{p}$. The latter is the join of two copies of $\SpCpx{S_p}{p}$ which is a graph, and is not contractible if $p\ge 5$. From this we see that when $p\ge 5$ the $p$-subgroups complex $\SpCpx{S_{2p}}{p}$ is a minimal synthetic building of dimension 1. On the other hand, a discrete set of vertices in bijection with the cosets of $S_p\wr C_2$ also forms a 0-dimensional minimal synthetic building for $S_p\wr C_2$ that is exotic, by Theorem~\ref{SPES-theorem}, and it is a homomorphic image of $\SpCpx{S_{2p}}{p}$. This shows that, when $p\ge 5$, the group $S_{2p}$ has minimal non-trivial synthetic buildings of dimensions 0 and 1. We are grateful to Chris Parker for pointing out these groups.
\end{example}

We conclude this section by classifying the minimal synthetic buildings for many groups with a strongly $p$-embedded subgroup.

\begin{theorem}
\label{SPES-uniqueness-theorem}
    Suppose $G$ has a strongly $p$-embedded subgroup $H$ that is a maximal subgroup, and with $O_p(H)\ne 1$. Then the only minimal synthetic buildings are a one-point space $\bullet$ and $\Delta(\calS_p(G)) = G/H$.
\end{theorem}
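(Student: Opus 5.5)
We want to show: if $G$ has a strongly $p$-embedded subgroup $H$ that is maximal with $O_p(H)\ne 1$, then every minimal synthetic building $\Delta$ for $G$ at $p$ is either a single point or $G/H$. We first confirm that $\Delta(\calS_p(G))$ is $G$-homotopic to $G/H$. Indeed $\Delta(\calS_p(G))$ is a synthetic building. Since $H$ is strongly $p$-embedded, Quillen's description shows the components correspond to the cosets $G/H$, with $H$ stabilizing one component. That component is $\Delta(\calS_p(H))$, and it is $H$-contractible because $O_p(H)\ne 1$ (Theorem~\ref{p-group-theorem}(2) and its proof). So the discrete set $G/H$ is a synthetic building with vertex stabilizers $H$. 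By Theorem~\ref{SPES-theorem}(2) it is $\bullet\uparrow_H^G$, and it is minimal because it is a single orbit of vertices.

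Now let $\Delta$ be a minimal synthetic building, and split into two cases.

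\textbf{Connected case.} The aim is to show that $\Delta$ has a $G$-fixed point, so that minimality forces $\Delta=\bullet$. Let $Q=O_p(H)\ne 1$. Then $\Delta^Q$ is $N_G(Q)$-contractible. Since $N_G(Q)\ge H$ and $H$ is maximal, either $N_G(Q)=G$ or $N_G(Q)=H$.
\begin{itemize}
\item If $N_G(Q)=G$, then $Q\le O_p(G)$, and Theorem~\ref{p-group-theorem}(2) gives $\Delta=\bullet$. (In fact this cannot occur alongside a strongly $p$-embedded subgroup.)
\item If $N_G(Q)=H$, then $\Delta^Q$ is $H$-contractible, so there is a vertex $v$ fixed by $H$. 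Its stabilizer $G_v$ contains $H$, so by maximality $G_v=H$ or $G_v=G$. If $G_v=G$ we are done. If $G_v=H$, the orbit $Gv\cong G/H$ is a discrete invariant subcomplex of $\Delta$ whose stabilizers are conjugates of $H$. It satisfies condition (1): for any $K$ with $O_p(K)\ne1$, the fixed set $(G/H)^K$ consists of the cosets $xH$ with $K\le {}^xH$, and strong $p$-embedding makes this a single point, since a nonidentity $p$-subgroup lies in a unique conjugate of $H$. By minimality $\Delta=G/H$, contradicting connectedness, unless $G=H$, which is excluded because $H$ is proper.
\end{itemize}

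\textbf{Disconnected case.} By Theorem~\ref{SPES-theorem}(1), the stabilizer $K$ of a component $\Delta_0$ is strongly $p$-embedded, and $\Delta_0$ is a synthetic building for $K$. The obstacle is to identify $K$ with $H$. A Sylow $p$-subgroup $S$ of $K$ fixes a point of $\Delta_0$, hence lies in a unique conjugate of $H$; choose $H$ to contain $S$. The subgroup $Q=O_p(H)$ fixes a point, which lies in some component. Since $N_G(Q)\supseteq H$ preserves $\Delta^Q$, which is connected, $H$ stabilizes that component, so $H\le$ some conjugate of $K$. Maximality of $H$ and properness of $K$ then give that $H$ equals that conjugate of $K$.

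Hence $\Delta=\Delta_0\uparrow_H^G$. As before, $\Delta_0$ has an $H$-fixed point $v$, because $O_p(H)\ne 1$ makes it $H$-contractible via Proposition~\ref{equiv-condition-proposition} and Bredon. The orbit $Gv\cong G/H$ is then a synthetic sub-building, so minimality gives $\Delta=G/H$.

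The step most requiring care is the verification that an orbit $G/H$ sitting inside $\Delta$ is itself a synthetic building; this uses the uniqueness of the conjugate of $H$ containing a given nonidentity $p$-subgroup.
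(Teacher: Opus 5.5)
Your proof is correct, and its core is exactly the paper's argument: condition (1$'$) gives $\Delta^H\simeq\bullet$, so some vertex is fixed by $H$; its stabilizer is $H$ or $G$ by maximality; and both a $G$-fixed point and the orbit $G/H$ are synthetic sub-buildings (the latter by Theorem~\ref{SPES-theorem}(2)), so minimality finishes. The split into connected and disconnected cases is unnecessary, as are the detour through $N_G(O_p(H))$ and the identification of the component stabilizer via Theorem~\ref{SPES-theorem}(1), since this argument applies verbatim to any minimal $\Delta$; also, (1$'$) only yields $\Delta_0^H\simeq\bullet$, not $H$-contractibility of $\Delta_0$ via Bredon (which would need all subgroups of $H$), but the fixed vertex is all you actually use.
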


\begin{proof}
Let $\Delta$ be a minimal synthetic building. Because $O_p(H)\ne 1$ we have that $\Delta^H$ is contractible and hence non-empty. Let $x\in\Delta^H$. Then the stabilizer of $x$ is a subgroup containing $H$, so it is $H$ or $G$ because $H$ is maximal. In these cases $\Delta$ contains a set of vertices that is a $G$-orbit $G/H$, or it has a fixed point. These are both synthetic buildings for $G$ (using Theorem~\ref{SPES-theorem} part (2)), so equal $\Delta$ by minimality, which completes the proof.
\end{proof}

Thus, for example, at $p=2$ neither $S_3$ nor $A_5$ has exotic minimal synthetic buildings.


\section{1-dimensional synthetic buildings}

In this section we do several things: we consider incidence graphs and introduce conditions under which they may be synthetic buildings, with specific examples for symmetric and alternating groups. For groups with minimal synthetic buildings of dimension 1 we develop a technique to show in some cases that no minimal synthetic buildings are exotic. Lastly, for groups with minimal non-trivial synthetic buildings of dimension 0 (hence, with a strongly $p$-embedded subgroup) we demonstrate the possibility of infinitely many synthetic buildings of dimension 1.

\subsection{Incidence graphs}
\label{incidence-graphs-subsection}

\begin{definition}
    Let $a,b,$ and $n$ be nonzero positive integers such that  $n>b\geq a\geq 1$. Let $\Delta=\Delta^n_{a,b}$ be the incidence graph whose vertices are the subsets of $[n]$ of size $a$ together with the subsets of $[n]$ of size $b$, and there is an edge joining an $a$-subset to a $b$-subset if and only if the $a$-subset is contained in the $b$-subset.
\end{definition}
Thus, the residue geometry of the Neumaier geometry in Example~\ref{ex:c3} is the $A_7$-graph $\Delta^7_{1,3}$. This is our motivating example for investigating incidence graphs further.

If we fix $G=A_n$, it turns out that $\Delta^n_{a,b}$
satisfies the condition to be slender for all choices of $a,b,n$ such that $n>b\geq a\geq 1$. The same result holds when enlarging $G$ to be all of $S_n$. However, we find that in very few cases $\Delta^n_{a,b}$ acted on by $G\in\{A_n,S_n\}$ satisfies the fixed point condition to be a synthetic building, leaving us with a handful of examples of incidence graphs that are slender synthetic buildings, which we classify.

We start by establishing results that are not dependent on the chosen group $G$, and hence would be useful in extending our results beyond the symmetric and alternating groups. The following lemma indicates that for a graph, is it sufficient to consider the fixed points under an element of order $p$ in order to determine the contractibility of the fixed points under any subgroup $H$ with $O_p(H)\ne 1$.

\begin{lemma}\label{lem:contractible_pelems}
Let $\Gamma$ be a $G$-graph. Then $\Gamma^H\simeq \bullet$ for all subgroups $H$ of $G$ with $O_p(H)\ne 1$ if and only if $\Gamma^{\langle g\rangle}\simeq \bullet$ for all elements $g$ of order $p$.
\end{lemma}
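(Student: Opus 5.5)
One direction is immediate. If $g$ has order $p$ then $H=\langle g\rangle$ is a $p$-group with $O_p(H)=H\ne 1$, so the hypothesis gives $\Gamma^{\langle g\rangle}\simeq\bullet$.

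For the converse we assume $\Gamma^{\langle g\rangle}\simeq\bullet$ for every element $g$ of order $p$. Let $H\le G$ with $O_p(H)\ne 1$, and choose $g\in Z(O_p(H))$ of order $p$; the centre of a nontrivial $p$-group is nontrivial. Put $\Lambda=\Gamma^{\langle g\rangle}$. It is a tree, being a contractible graph. Since $\langle g\rangle$ is normal in $H$ (it is characteristic in $Z(O_p(H))$ only if we take all of $\Omega_1(Z(O_p(H)))$, so instead we work with a normal subgroup), we have to be slightly careful here. The cleanest route is to replace $\langle g\rangle$ by $E=\Omega_1(Z(O_p(H)))$, which is an elementary abelian $p$-group, characteristic in $O_p(H)$ and hence normal in $H$. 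The proof then has two steps.

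\textbf{Step 1.} Show that $\Gamma^E$ is a tree. For $E$ abelian, the group $E/\langle g\rangle$ acts on the tree $\Gamma^{\langle g\rangle}$, and $\Gamma^E$ is its fixed point set. We use the standard fact that a finite group acting simplicially without inversions on a finite tree has a nonempty, connected (indeed contractible) fixed point set. Connectedness holds because the unique geodesic between two fixed vertices is fixed pointwise. Nonemptiness follows by induction for a $p$-group, for instance via a fixed-point argument on the tree, or by noting $\chi=1\not\equiv 0 \pmod p$ and using Smith theory $\chi(\Gamma^P)\equiv\chi(\Gamma)\pmod p$. Since $G$-simplicial complexes fix setwise-stabilized simplices pointwise, there are no inversions. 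Inducting on $|E|$ by peeling off one cyclic factor at a time, we get that $\Gamma^E$ is a nonempty subtree.

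\textbf{Step 2.} Since $E\trianglelefteq H$, the finite group $H/E$ acts on the tree $\Gamma^E$ without inversions, and $\Gamma^H=(\Gamma^E)^{H/E}$. For an arbitrary finite group acting on a finite tree the fixed point set is again nonempty: take the tree's centre (a vertex or an edge, fixed setwise, hence pointwise by the stabilizer convention), or argue using the $H$-invariant barycentre of a vertex orbit. The fixed set is connected by the geodesic argument. A nonempty connected subgraph of a tree is a tree, so $\Gamma^H\simeq\bullet$.

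The main obstacle I expect is the nonemptiness of fixed points for an arbitrary finite group on a finite tree. My plan is the centre-of-tree argument: repeatedly deleting leaves is $H$-equivariant, and it terminates in a single vertex or a single edge, which $H$ then stabilizes. This argument also handles Step 1 directly, so Step 1 can be merged into Step 2 by simply taking $E=\langle g\rangle$ normal. That is not always possible, which is why we use $\Omega_1(Z(O_p(H)))$ and iterate through cyclic subgroups.
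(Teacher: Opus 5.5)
Your proof is correct and is essentially the paper's argument: both pass along a chain of normal subgroups starting from a subgroup of order $p$, and repeatedly use the fact that a finite group acting without inversions on a finite tree fixes a nonempty subtree. The paper's chain is a composition series of $O_p(H)$ followed by $O_p(H)\trianglelefteq H$; yours is $\langle g\rangle\le E=\Omega_1(Z(O_p(H)))\trianglelefteq H$, and you also justify the tree fixed-point fact via the centre of the tree. You should delete the self-contradictory parenthetical asserting $\langle g\rangle\trianglelefteq H$, since your actual argument never uses it.
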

\begin{proof}
    The implication from left to right is immediate. For the implication from right to left we argue as follows. Every finite $p$-subgroup $P$ has a normal chain of subgroups $1=P_0\triangleleft P_1\triangleleft\cdots \triangleleft P_n=P$ so that each $P_i/P_{i-1}$ is cyclic of order $p$. Because $P_1$ is cyclic of order $p$, $\Gamma^P_1$ is a contractible graph so that it is a tree. Now, by induction, $\Gamma^{P_i}=(\Gamma^{P_{i-1}})^{P_i}$ is contractible since it is the fixed points of a finite group on a tree. Finally, if we take a subgroup $H$ with $O_p(H)\ne 1$ then $\Gamma^{O_p(H)}$ is contractible as just shown, and is a tree preserved by $H$. Thus $\Gamma^H$ is contractible because, again, it is the fixed points of a finite group acting on a tree.
\end{proof}

Now that we can focus solely on the action of a $p$-cycle on our complexes, the following theorem provides the architecture for finding synthetic buildings of the form $\Delta^n_{1,b}$ for any finite group $G$. We will subsequently apply this result to the symmetric and alternating groups.

\begin{theorem}\label{thm:fps_1b}
    Let $g$ be a permutation of order $p$ acting on $\Delta:=\Delta_{1,b}^n$. Let $f$ be the number of fixed points of $g$. Then $\Delta^{\langle g\rangle}\simeq \bullet$ if and only if at least one of the following holds:
    \begin{itemize}
        \item $f=1$ and $p\notdivide b$
        \item $f=b$ and either $b=p+1$ or $1\leq b\leq p-1$
    \end{itemize}
\end{theorem}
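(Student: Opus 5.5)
The plan is to describe the fixed subgraph $\Delta^{\langle g\rangle}$ explicitly and then read off when it is a tree. A contractible graph is exactly a nonempty connected acyclic graph. Write $n=f+kp$, so that $g$ has $f$ fixed points, forming a set $F$, and $k$ cycles of length $p$.

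\textbf{Describing the fixed subgraph.} A point-vertex $x$ is fixed iff $x\in F$. A $b$-subset $B$ is fixed iff it is a union of $j$ of the $p$-cycles together with a set $E\subseteq F$ with $|E|=e$, where $b=jp+e$, $0\le j\le k$ and $0\le e\le f$. Its neighbours in $\Delta^{\langle g\rangle}$ are exactly the points of $E$. Since $n>b$, the case $j=k$, $e=f$ never occurs. So $\Delta^{\langle g\rangle}$ is the bipartite incidence graph between $F$ and these subsets, which I will call invariant subsets. The proof is then a case analysis on which pairs $(j,e)$ are admissible.

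\textbf{Case $f=0$.} There are no point-vertices, so every invariant subset is an isolated vertex. The graph is contractible only if there is exactly one invariant subset. That forces $e=0$ and $\binom kj=1$ with $j\ge1$, hence $j=k$ and $b=n$, which is impossible. So $f=0$ never gives a contractible fixed subgraph.

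\textbf{Case $f=1$.} Every invariant subset has $e\in\{0,1\}$.
\begin{itemize}
\item If $p\mid b$, then $j=b/p$ satisfies $j\le k$ because $b<n=1+kp$. This gives an invariant subset with $e=0$. It is an isolated vertex, disjoint from the fixed point, so the graph is disconnected.
\item If $p\nmid b$, every invariant subset has $e=1$ and contains the unique fixed point. The graph is then a star, or a single vertex if there are no invariant subsets. In either case it is a tree.
\end{itemize}
This gives the first bullet of the statement.

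\textbf{Case $f\ge2$.} I derive constraints from acyclicity and then from connectivity.
\begin{itemize}
\item \emph{No $e=0$.} An invariant subset with $e=0$ would be isolated from the nonempty set $F$, so no admissible pair may have $e=0$.
\item \emph{Acyclicity.} Two distinct invariant subsets sharing two fixed points give a $4$-cycle.
  \begin{itemize}
  \item If $3\le e<f$, choose $E\ne E'$ with $|E\cap E'|\ge2$.
  \item If $e=2<f$, the three subsets with $E=\{x,y\},\{y,z\},\{x,z\}$ and the same cycles give a $6$-cycle.
  \item If $e=f\ge2$, acyclicity forces $\binom kj=1$, i.e.\ $j=0$, since $j=k$ is excluded.
  \end{itemize}
  Thus the only admissible values are $e=1$, or $e=f$ with $j=0$, i.e.\ $b=f$.
\item \emph{Connectivity.} Subsets with $e=1$ are leaves and cannot join two distinct fixed points. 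With $f\ge2$, connectivity therefore requires a subset with $e\ge2$, hence $b=f$. That unique subset $F$ joins all the points, and any $e=1$ subsets hang off as leaves, so the graph is a tree.
\end{itemize}

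\textbf{Pinning down $b=f$.} With $b=f$, it remains to require that each other admissible pair, with $e'=f-j'p\ge0$ and $j'\ge1$, has $e'=1$. For $j'=1$ this means either $f<p$, or $f-p=1$. When $f=p+1$, the value $f-2p=1-p$ is negative, so no further pairs arise. This yields exactly $1\le b\le p-1$ or $b=p+1$. Note that $b=f=1$ is consistent with both bullets.

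The step I expect to need most care is the acyclicity analysis when $f\ge2$, in particular exhibiting explicit cycles for $e=2<f$ and for $e=f$ with $\binom kj\ge2$. Once that is settled, the converse is immediate from the tree descriptions above.
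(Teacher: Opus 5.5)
Your proof is correct. It organizes the case analysis differently from the paper.

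\textbf{The paper's route.} The paper splits according to how $f$ compares with $b$.
\begin{itemize}
\item The cases $f=1$ and $f=b$ are handled by describing the fixed graph directly. The case $f=b$ is subdivided into $b\le p-1$, $b=p$, $b=p+1$ and $b\ge p+2$.
\item The case $2\le f<b$ is handled by counting the $\binom{\lambda}{\gamma}$ invariant subsets built on a given set of fixed points, with $\gamma=\lambda$ treated separately.
\item The case $f>b\ge 2$ is handled by an edge count. The subgraph on $F$ and the $b$-subsets of $F$ has at least as many edges as vertices, so it contains a circuit.
\end{itemize}

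\textbf{Your route.} You parametrize all invariant subsets once by the pairs $(j,e)$. For $f\ge 2$ you extract local constraints: no pair with $e=0$ may occur, and acyclicity (via explicit $4$- and $6$-cycles) forces every occurring pair to be $e=1$ or $(j,e)=(0,f)$. Since $e=1$ vertices are leaves, connectivity then forces $b=f$. Only the single pair $(1,f-p)$ is left to check.

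\textbf{Comparison.} Your version is more uniform and needs neither the counting argument nor the $\gamma=\lambda$ subcase. It also explicitly covers $f=0$, which the paper's division into $f=1$, $f=b$, $2\le f<b$, $f>b$ silently omits. That omission is harmless: for $f=0$ the fixed graph is either empty or a discrete set of $\binom{k}{b/p}\ge 2$ points. In exchange, the paper's version gives a concrete picture of the fixed graph in each case, such as the cone for $b\le p-1$ and the star with pendant leaves for $b=p+1$.

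\textbf{One implicit step.} The last step uses $k\ge 1$, so that the pair with $j'=1$ is available. This holds because $g$ has order $p$, and you may want to say so explicitly.
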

\begin{proof}
First suppose $f=1$. We show that $\Delta^{\langle g\rangle}\simeq \bullet$ if and only if $p\notdivide b$.
    Consider $b$-subsets $B$ fixed by $g$.
    If $p\mid b$ then such $B$ must be a union of $p$-cycles of $g$, together with fixed points of $g$, but since there is only one fixed point of $g$, $B$ must consist only of points moved by $g$. Thus $B$ is not incident with the single fixed point of $g$ and $\Delta^{\langle g\rangle}$ is disconnected if such $B$ exist. Observe that such $B$ do exist, because $b<n$. On the other hand, if $p\notdivide b$ then there is a single edge connecting the fixed point of $g$ to any such $B$ (there might not be any), which can be contracted down to this vertex. This completes the case $f=1$.  

    Now suppose $f=b$. Let $u_1,\ldots,u_b$ denote the fixed points of $g$. Then $g$ clearly fixes the $b$-subset $\{u_1,\ldots,u_b\}$ regardless of any restrictions on $b$ and $p$. If $1\leq b\leq p-1$, then this $b$-subset is the only fixed $b$-subset, meaning that $\Delta^{\langle g\rangle}$ is a cone and hence contractible. If $b=p+1$, then the other fixed $b$-subsets are of the form $\{u_i, x_1,\ldots, x_{b-1}\}$ for all $1\leq i \leq b$ and all choices of $p$-cycle $(x_1\,\,\cdots\,\,x_{b-1})$ of $g$. This means that if $g$ is comprised of a certain number $\lambda$ of cycles of size $p$, then each fixed point is incident with $\lambda$ subsets of size $b$, while each $b$-subset is incident with one and only one fixed point. Every one of these $b$-subsets contracts down to their respective fixed point, meaning that $\Delta^{\langle g\rangle}$ is again homotopy equivalent to a cone and hence contractible. On the other hand, if $b=p$, then there exists a fixed $b$-subset of the form $\{x_1,\ldots, x_b\}$ for each $p$-cycle $(x_1\,\,\cdots\,\,x_b)$ of $g$. Such a $b$-subset is not incident with any fixed points of $g$, resulting in $\Delta^{\langle g\rangle}$ being disconnected. If $b\geq p+2$, then there exist fixed $b$-subsets which contain two or more fixed points of $g$. Since such a $b$-subset is incident with multiple fixed points and the $b$-subset $\{u_1,\ldots,u_b\}$ is incident with every fixed point, $\Delta^{\langle g\rangle}$ contains at least one circuit. In summary, $\Delta^{\langle g\rangle}\simeq\bullet$ with $f=b$ if and only if $b=p+1$ or $1\leq b\leq p-1$.

    We will now show that if $f\ne 1$ and $f\ne b$, then $\Delta^{\langle g\rangle}\not\simeq\bullet$. Suppose $2\leq f<b$. If no $b$-subset is fixed by $g$, then $\Delta^{\langle g\rangle}$ is disconnected since $f\geq2$ and it is a disjoint union of this number of points. So suppose that at least one $b$-subset is fixed by $g$. Since $f<b$, every fixed $b$-subset must contain at least one non-fixed point. In particular, a fixed $b$-subset must consist of $k$ fixed points with $k\ge 1$, and $b-k$ non-fixed points which are a union of $p$-cycles of $g$. Suppose that $g$ consists of $\lambda$ cycles of size $p$ and $b-k=\gamma p$ , so $1\le\gamma\le \lambda$. If $\gamma<\lambda$, then there are $\binom{\lambda}{\gamma}$ fixed $b$-subsets which are incident with each set of $k$ fixed points, creating at least one circuit in $\Delta^{\langle g\rangle}$ if $k\geq 2$ and at least two disjoint components in $\Delta^{\langle g\rangle}$ if $k=1$ because this binomial coefficient is greater than 1. Now suppose $\gamma=\lambda$. In this situation, the fixed $b$-subset $B$ consists of all the $p$-cycles of $g$, together with some fixed points of $g$. We deduce $k<f$ because if $k=f$ then $b=n$, which has already been dealt with. If there is more than one fixed point in $B$, i.e. $k\ge 2$, then $\Delta^{\langle g\rangle}$ has a circuit. If there is only one fixed point in $B$, i.e. $k=1$, then $B$ is connected to that unique fixed point, and $\Delta^{\langle g\rangle}$ is disconnected because $f\ge 2$. In all cases, $\Delta^{\langle g\rangle}\not\simeq\bullet$.

    Finally, suppose $f>b$. If $b=1$ then $\Delta$ consists of a disjoint union of edges and $\Delta^{\langle g\rangle}\simeq \bullet$ if and only if $f=1$, which is accounted for in the first condition. Suppose that $f>b\geq2$.  Consider the subgraph of $\Delta^{\langle g\rangle}$ consisting of all $f$ fixed points of $g$ and all the $b$-subsets which contain only the fixed points of $g$. This subgraph has $b\cdot\binom{f}{b}$ edges and $f+\binom{f}{b}$ vertices. Since $b\geq2$, we have $\frac{b}{2}\cdot\binom{f}{b}\geq\binom{f}{b}$. Furthermore, $f>b$ implies that $\binom{f}{b}\geq f$. Thus, we have $\frac{b}{2}\cdot\binom{f}{b}+\frac{b}{2}\cdot\binom{f}{b}\geq f+\binom{f}{b}$ which simplifies to $b\cdot\binom{f}{b}\geq f+\binom{f}{b}$. Since the number of edges is equal to or exceeds the number of vertices, $\Delta^{\langle g\rangle}$ contains a circuit and hence is not contractible.
\end{proof} 

In these results we continue with our running assumption that $b<n$. We also suppose that $p\le n$ in what follows to ensure that there are some non-identity $p$-subgroups, thus avoiding the trivial case of a group of order prime to $p$ where every $G$-simplicial complex is a weak synthetic building.

With the goal of classifying all synthetic buildings of the form $\Delta^n_{1,b}$ for $G\in\{S_n,A_n\}$ at a prime $p$, we present the following two lemmas which provide the values of $a,b,p,n$ such that every simplex stabilizer in the $G$-graph $\Delta^n_{a,b}$ has order divisible by $p$. 

\begin{lemma}\label{lem:sn_stabsbyp}
    Let $\Delta=\Delta^n_{a,b}$ and let $S_n$ act on $\Delta$. Then every simplex is fixed by a non-identity $p$-subgroup of $S_n$ if and only if at least one of the following holds: $n-b\geq p$, $b-a\geq p$, or $a\geq p$.
\end{lemma}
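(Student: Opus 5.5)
The plan is to reduce the statement to edge stabilizers and then handle the two directions by elementary counting on how a $p$-element can act. Every vertex of $\Delta^n_{a,b}$ lies on some edge, since each $a$-subset lies in some $b$-subset (as $b<n$) and each $b$-subset contains some $a$-subset. Hence it suffices to show that every edge is fixed by a non-identity $p$-subgroup if and only if one of the three inequalities holds: the stabilizer of a vertex contains the stabilizer of any edge through it. An edge is a pair $A\subseteq B$ with $|A|=a$, $|B|=b$. Its stabilizer in $S_n$ is the subgroup preserving $A$, $B\setminus A$ and $[n]\setminus B$, namely
$$S_A\times S_{B\setminus A}\times S_{[n]\setminus B}\cong S_a\times S_{b-a}\times S_{n-b}.$$
All edges form a single $S_n$-orbit, so we only need to examine one of these stabilizers.

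For the direction where an inequality holds, we exhibit an element of order $p$ directly. If $a\ge p$, take a $p$-cycle on $p$ points of $A$; if $b-a\ge p$, take a $p$-cycle on $p$ points of $B\setminus A$; if $n-b\ge p$, take a $p$-cycle on $p$ points of $[n]\setminus B$. In each case the $p$-cycle fixes the edge, and so it also fixes both endpoints.

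For the converse, suppose $a<p$, $b-a<p$ and $n-b<p$. Then $p$ divides none of $a!$, $(b-a)!$, $(n-b)!$. So the edge stabilizer $S_a\times S_{b-a}\times S_{n-b}$ has order prime to $p$. Thus no edge is fixed by a non-identity $p$-subgroup, which gives the contrapositive.

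The only point needing care is the reduction from all simplices to edges. Here the non-emptiness of incidences (using $b<n$ and $a\ge1$) guarantees every vertex lies on an edge. The case $a=b$ is degenerate but harmless, since then $B\setminus A$ is empty and the middle factor is trivial. I do not expect any serious obstacle.
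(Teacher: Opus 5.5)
Your proof is correct and follows essentially the same route as the paper. Both compute the edge stabilizer as $S_a\times S_{b-a}\times S_{n-b}$, observe that $p$ divides its order exactly when one of the three factors has degree at least $p$, and pass to vertices because edge stabilizers are contained in vertex stabilizers; you also make explicit that every vertex lies on an edge (which follows from $a\le b\le n$), a point the paper leaves implicit.
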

\begin{proof}
Let $\ell_a$ be an $a$-subset of $[n]$ and $\ell_b$ be a $b$-subset of $[n]$.
Without loss of generality, let $\ell_a=\{n-a+1,\ldots,n\}$ and $\ell_b=\{n-b+1,\ldots,n\}$. Every element $g\in S_n$ which fixes $\ell_b$ is of the form $g=\sigma\cdot\tau$ where $\sigma\in S_{\{1,\ldots,n-b\}}$ and $\tau\in S_{\{n-b+1,\ldots,n\}}$ so that
$$\Stab(\ell_b)=\langle S_{\{1,\ldots,n-b\}}, S_{\{n-b+1,\ldots,n\}}\rangle.$$
The edge in $\Delta$ corresponding to the incidence $\ell_a\subseteq \ell_b$ has stabilizer
$$\Stab(\ell_a,\ell_b) = \langle S_{\{1,\ldots,n-b\}}, S_{\{n-a+1,\ldots,n\}}, S_{\{n-b+1,\ldots,n-a\}}\rangle.$$
To show that the order of $\Stab(\ell_a,\ell_b)$ is divisible by $p$ we show that at least one of these three symmetric groups has order divisible by $p$. This happens if and only $n-b\geq p$ or $a\geq p$ or $b-a\geq p$, which are the three conditions stated.
Since $\Stab(\ell_a,\ell_b)\subseteq\Stab(\ell_a)\cap \Stab(\ell_b)$, the desired result holds.
\end{proof}

\begin{lemma}\label{lem:an_stabsbyp}
    Let $\Delta=\Delta^n_{a,b}$ and let $A_n$ act on $\Delta$. If $p$ is an odd prime, then every simplex is fixed by a non-identity $p$-subgroup of $A_n$ if and only if at least one of the following holds: $n-b\geq p$, $b-a\geq p$, or $a\geq p$.
    
    If $p=2$, then every simplex is fixed by a non-identity $2$-subgroup of $A_n$ if and only if either at least two of $n-b\ge 2$, $b-a\ge 2$, $a\ge 2$ hold; or $(a,b)\in\{(1,1),(n-1,n-1)\}$ and $n\ge 5$; or $(a,b)\in\{(1,2), (1,n-1), (n-2,n-1)\}$ and $n\ge 6$.
\end{lemma}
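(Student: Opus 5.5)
The plan is to reduce the statement to a computation with the edge stabilizers, since every vertex of $\Delta=\Delta^n_{a,b}$ lies on an edge. Indeed, because $1\le a\le b<n$, every $a$-subset lies in some $b$-subset and every $b$-subset contains some $a$-subset. The stabilizer of a vertex contains the stabilizer of any edge through it. So every simplex is fixed by a non-identity $p$-subgroup of $A_n$ if and only if every edge stabilizer has order divisible by $p$.

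As in the proof of Lemma~\ref{lem:sn_stabsbyp}, the stabilizer in $S_n$ of the edge $\ell_a\subseteq\ell_b$ is the Young subgroup $Y=S_x\times S_y\times S_z$. The blocks have sizes $x=n-b\ge 1$, $y=b-a\ge 0$ and $z=a\ge 1$, with $x+y+z=n$. The stabilizer in $A_n$ is $Y\cap A_n$. All edges lie in one $A_n$-orbit, since $A_n$ is transitive on pairs $\ell_a\subseteq\ell_b$, but we do not even need this, because the block sizes are the same for every edge. The question is therefore when $Y\cap A_n$ contains an element of order $p$.

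For $p$ odd, every element of order $p$ is a product of disjoint $p$-cycles and hence is even. So $Y\cap A_n$ contains such an element if and only if $Y$ does, which happens if and only if some block has size at least $p$. These are exactly the three conditions stated, as in Lemma~\ref{lem:sn_stabsbyp}.

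For $p=2$ we need an even involution in $Y$. The key step is to show that such an element exists if and only if either two of the blocks have size at least $2$, or one block has size at least $4$. In the first case a product of a transposition in each of the two blocks works. In the second case a double transposition inside the large block works. Conversely, suppose only one block has size at least $2$. Then any involution in $Y$ lives in that block, because the blocks of size $\le 1$ contribute trivially. An even involution there needs at least two disjoint transpositions, so the block must have size at least $4$.

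It remains to translate the second alternative, in which exactly one block has size $\ge 2$ and it has size $\ge 4$, into the listed exceptions. Using $x,z\ge1$, the other two blocks have sizes in $\{0,1\}$ subject to these constraints. I will enumerate the possibilities according to which block is the large one.
\begin{itemize}
\item The large block is $x$. Then $z=a=1$ and $y\in\{0,1\}$, giving $(a,b)=(1,1)$ or $(1,2)$. Here $x=n-1$ or $n-2$, so the condition $x\ge4$ becomes $n\ge5$ or $n\ge6$ respectively.
\item The large block is $y$. Then $x=z=1$, giving $(1,n-1)$ with $y=n-2\ge4$, that is $n\ge6$.
\item The large block is $z$. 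Then $x=1$ and $y\in\{0,1\}$, giving $(n-1,n-1)$ with $n\ge5$, or $(n-2,n-1)$ with $n\ge6$.
\end{itemize}
This matches the list in the statement. The only real care needed is this final bookkeeping: checking that no other configuration has exactly one block of size $\ge 2$, and noting that with no block of size $\ge2$ at all the stabilizer has odd order (in fact order $1$).
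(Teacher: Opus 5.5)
Your proof is correct. It uses the same reduction as the paper: pass to the edge stabilizers $\Stab(\ell_a,\ell_b)$ and decide when $p$ divides their order. The difference is in how that last step is carried out.

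The paper writes down generators for $\Stab(\ell_a,\ell_b)\le A_n$: the alternating groups on the three blocks, together with double transpositions taking one transposition from each of two different blocks. It then observes that the stated conditions make one of these generating subgroups have order divisible by $p$. The converse is left to a check of the excluded cases.

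You instead identify the stabilizer as $(S_x\times S_y\times S_z)\cap A_n$ and argue with elements.
\begin{enumerate}
\item For odd $p$, every element of order $p$ is even, so the question reduces directly to Lemma~\ref{lem:sn_stabsbyp}.
\item For $p=2$, an even involution in a Young subgroup needs two disjoint transpositions inside blocks. This forces either two blocks of size $\ge 2$ or one block of size $\ge 4$.
\end{enumerate}
This handles both directions uniformly, and the exceptional list comes out of a short enumeration rather than an unspecified case check. That is tighter than the paper's sketch, whose converse is only asserted.

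One small remark: your parenthetical claim that $A_n$ is transitive on flags $\ell_a\subseteq\ell_b$ is false when every block has size at most $1$. For example, with $n=3$ and $(a,b)=(1,2)$, $A_3$ has two orbits on the six flags. As you say, the claim is not used, so nothing breaks.
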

\begin{proof}
Without loss of generality, let $\ell_a=\{n-a+1,\ldots,n\}$ be an $a$-subset of $[n]$ and $\ell_b=\{n-b+1,\ldots,n\}$ be a $b$-subset of $[n]$.
Every element $g\in A_n$ which fixes $\ell_b$ is of the form $g=\sigma\cdot\tau$ where $\sigma$ is a permutation of $\{1,\ldots,n-b\}$ and $\tau$ is a permutation of $\{n-b+1,\ldots,n\}$ such that $\sigma$ and $\tau$ have the same parity. If $\sigma$ and $\tau$ are both even permutations, then 
$\sigma\in A_{\{1,\ldots,n-a\}}$ and $\tau\in A_{\{n-a+1,\ldots,n\}}$. 
The case when $\sigma$ and $\tau$ are both odd permutations occurs only when $n\geq b+2$ and $b\geq2$. In this situation, we have $\sigma\in (1,2)\cdot A_{\{1,\ldots,n-b\}}$ and $\tau\in (n-b+1,n-b+2)\cdot A_{\{n-b+1,\ldots,n\}}$.  In summary, we have 
$$\Stab(\ell_b)=\begin{cases}
\langle A_{\{1,\ldots,n-b\}}, A_{\{n-b+1,\ldots,n\}}\rangle & \text{if }n=b+1 \text{ or }b=1\\
\langle A_{\{1,\ldots,n-b\}}, A_{\{n-b+1,\ldots,n\}}, C_2\rangle & \text{if }n\geq b+2\text{ and }b\geq 2\end{cases}$$
where $C_2$ is generated by the permutation $(1,2)(n-b+1,n-b+2)$.

We now find the stabilizer $\Stab(\ell_a,\ell_b)$ of the edge connecting $\ell_a$ and $\ell_b$. When $n-4\geq b-2\geq a\geq 2$, this is given by
$$\Stab(\ell_a,\ell_b)=\langle A_{\{1,\ldots,n-b\}}, A_{\{n-a+1,\ldots,n\}}, A_{\{n-b+1,\ldots,n-a\}}, C_2, C'_2, C''_2\rangle$$
where $C_2=\langle(1,2)(n-a+1,n-a+2)\rangle$, $C'_2=\langle(1,2)(n-b+1,n-b+2)\rangle$, and $C''_2=\langle(n-b+1,n-b+2)(n-a+1,n-a+2)\rangle$ so long as these subgroups are permitted by the values of $a, b$ and $n$.
Evidently $\Stab(\ell_a,\ell_b)$ has order divisible by $p$ if at least one of these six subgroups appearing in the stabilizer and has order divisible by $p$. We verify that if the stated conditions hold then one of these subgroups does appear. On the other hand, we may check that the finitely many cases excluded by the conditions do not give a stabilizer of order divisible by $p$.
Since $\Stab(\ell_a,\ell_b)\subseteq\Stab(\ell_a)\cap \Stab(\ell_b)$, the desired result holds.
\end{proof}

We now have all the tools necessary to classify all synthetic buildings of the form $\Delta^n_{1,b}$ for $S_n$ and $A_n$ at a prime $p$. Due to the acyclicity condition in Corollary~\ref{cor:modp}, all such synthetic buildings are also slender.

\begin{corollary}\label{cor:sbs_1b_sn}
    Let $p$ be a prime and $n\ge p$. Consider the action of $S_n$ on $\Delta=\Delta^n_{1,b}$. Then $\Delta$ is a synthetic building for $S_n$ at $p$ if and only if $n=p+b$ and either $b=p+1$ or $1\le b\le p-1$. In all cases, $\Delta$ is slender.
\end{corollary}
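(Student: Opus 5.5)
By Lemma~\ref{lem:contractible_pelems}, condition (1) of Definition~\ref{synthetic-building-definition} (in the form (1$'$) of Proposition~\ref{equiv-condition-proposition}) holds for the graph $\Delta=\Delta^n_{1,b}$ exactly when $\Delta^{\langle g\rangle}\simeq\bullet$ for every element $g\in S_n$ of order $p$. So the first task is to translate Theorem~\ref{thm:fps_1b} into a condition on $n,b,p$ that holds for \emph{all} elements of order $p$ simultaneously. An element of order $p$ in $S_n$ is a product of $\lambda$ disjoint $p$-cycles with $1\le\lambda\le\lfloor n/p\rfloor$, and it has $f=n-\lambda p$ fixed points. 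All these values of $\lambda$ occur in $S_n$; this is where $S_n$ rather than $A_n$ matters, and it avoids any parity issue for $p=2$. Theorem~\ref{thm:fps_1b} requires $f\in\{1,b\}$ for every such $\lambda$, with the attached divisibility conditions.

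Since $b\ge 1$, the set $\{1,b\}$ has at most two elements. The values $f=n-\lambda p$ for distinct $\lambda$ are distinct and differ by multiples of $p$, so at most two values of $\lambda$ are allowed. The main case analysis is as follows.

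If $\lambda=1$ gives $f=n-p=1$, then $n=p+1$. In that case any further $\lambda$ would force $f=b\equiv 1 \pmod p$ with $b<n=p+1$, which is impossible unless $b=1$, and then $b$ coincides with the value $1$ already used. So $n=p+1$ and only $\lambda=1$ occurs. The condition is then $p\notdivide b$ with $b\le p$, i.e.\ $1\le b\le p-1$. Here I will need to check condition (2): by Lemma~\ref{lem:sn_stabsbyp} with $a=1$, we need $n-b\ge p$, $b-1\ge p$, or $1\ge p$. The first gives $p+1-b\ge p$, so $b=1$; the second fails since $b\le p-1$. So this branch yields only $b=1$, $n=p+1=p+b$, which is consistent with the claim.

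Otherwise the smallest fixed-point count $f_{\min}=n-\lfloor n/p\rfloor p$ and the count $n-p$ must both lie in $\{1,b\}$. If $n-p=b$ then $n=p+b$, and the condition from Theorem~\ref{thm:fps_1b} at $\lambda=1$ is $b=p+1$ or $b\le p-1$. I must then handle the larger $\lambda$. If $b\le p-1$, then $n<2p$, so only $\lambda=1$ occurs. If $b=p+1$, then $n=2p+1$ and $\lambda=2$ gives $f=1$, which requires $p\notdivide p+1$; this holds. Conversely, if $n-p\ne b$ and $n-p\ne1$, the case fails immediately. I also need to rule out $n-p=1$ together with another $\lambda$, which was done above. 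Note also that $n\ge p$ together with $n=p$ gives $f=0$, which fails.

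For condition (2) when $n=p+b$, Lemma~\ref{lem:sn_stabsbyp} applies because $n-b=p\ge p$. This gives sufficiency, and necessity follows from the fixed-point analysis above. Slenderness then follows from Corollary~\ref{cor:modp} as remarked before the statement: $S_n\backslash\Delta$ is a graph, here a single edge, because $S_n$ is transitive on $1$-subsets, on $b$-subsets, and on incident pairs. It is therefore contractible, or one can simply observe that a mod $p$ acyclic graph is a tree.

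The step I expect to need the most care is the bookkeeping showing that at most the listed values of $\lambda$ are compatible, especially the small edge cases ($b=1$, $b=p$, $n=p$, $p=2$). I would organise it by the value of $n-p$, as above.
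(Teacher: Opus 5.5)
Your proposal is correct and follows essentially the same route as the paper. You reduce by Lemma~\ref{lem:contractible_pelems} to elements of order $p$, apply Theorem~\ref{thm:fps_1b} to each possible number $\lambda$ of $p$-cycles, and invoke Lemma~\ref{lem:sn_stabsbyp} (via $n-b=p$) for the stabilizer condition. The differences are only organizational. You split cases by the value of $n-p$ rather than by the range of $n$, which also makes the overlap $n=p+1$, $b=1$ explicit where the paper simply asserts that no inequality of Lemma~\ref{lem:sn_stabsbyp} holds when $n=p+1$. Your transitivity argument also gives slenderness for all $n,b$, whereas the paper deduces it from Corollary~\ref{cor:modp} only when $\Delta$ is a synthetic building.
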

\begin{proof}
To prove the condition that $\Delta^H\simeq \bullet$ for all $p$-subgroups $H$, by Lemma~\ref{lem:contractible_pelems} it suffices to consider the condition that $\Delta^{\langle g\rangle}\simeq\bullet$ for all non-identity $p$-elements $g$. Observe that if $b<n=p$ then $\Delta^{\langle g\rangle}$ is empty, so is not contractible. Henceforth we may assume $p<n$.
    
Since we want $\Delta^{\langle g\rangle}\simeq \bullet$ for all non-identity $p$-elements of $S_n$, we need $\Delta^{\langle g\rangle}\simeq \bullet$ for all possible cycle types of $g$. For each $p\mid n!$, the possible cycle types for a $p$-element of $S_n$ consist of $p$-cycles where the number of $p$-cycles is 1 up to the largest multiple of $p$ less than $n$. 
    
Let $f$ be the number of points fixed by a given $p$-element $g$. Suppose $p<n<2p$. Then the only possible cycle type for $g$ is a single $p$-cycle. In order for $\Delta^{\langle g\rangle}\simeq \bullet$, we need $f=1$ or $f=b$ by Theorem~\ref{thm:fps_1b}. Note that $f=1$ if and only if $n=p+1$, and $f=b$ if and only if $n=p+b$. Thus $\Delta^{\langle g\rangle}\simeq \bullet$ with $p<n<2p$ if and only if $n=p+1$ and $p\notdivide b$ or $n=p+b$ and either $b=p+1$ or $1\leq b\leq p-1$ by Theorem~\ref{thm:fps_1b}. Now suppose $2p<n<3p$. Then there are two possible cycle types for $g$: a single $p$-cycle or two disjoint $p$-cycles. In order for $\Delta^{\langle g\rangle}\simeq \bullet$ for both choices of cycle type for $g$, one cycle types needs to produce $f=1$ and the other $f=b$. Necessarily the cycle type of a single $p$-cycle has $f=b$ and the cycle type of two $p$-cycles has $f=1$. This occurs if and only if $n=2p+1$ and $b=p+1$, which is the same as saying $n=p+b$ and $b=p+1$, which is one of the possibilities listed.

If $n>3p$, then $g$ has three possible cycle types: a single $p$-cycle, two disjoint $p$-cycles, or three disjoint $p$-cycles. Let $g_i$ denote a $p$-element with whose cycle type is $i$ disjoint $p$-cycles and let $f_i$ denote the number of points fixed by $g_i$. Necessarily none of the $f_i$ are equal to each other. Theorem~\ref{thm:fps_1b} states that only two choices for the value of $f_i$ could result in $\Delta^{\langle g\rangle}\simeq \bullet$, namely $1$ or $b$, so since the three $f_i$ are distinct and we need $\Delta^{\langle g_i\rangle}\simeq \bullet$ for all $i\in\{1,2,3\}$, we conclude that $\Delta^{\langle g\rangle}\not\simeq \bullet$ for at least one class of $p$-element $g$ when $n>3p$, and so $n>3p$ cannot occur.

If $n=kp$ for some $k>1$, then there is a $p$-element consisting of $k$ disjoint $p$-cycles which has no fixed points. Thus $\Delta^{\langle g\rangle}$ is empty and hence not contractible for every $p$-element $g$, so $n=kp$ cannot occur either.

In summary, $\Delta^{\langle g\rangle}\simeq \bullet$ for all non-identity $p$-elements of $S_n$ if and only if $n=p+1$ and $p\notdivide b$ or $n=p+b$ and either $b=p+1$ or $1\leq b\leq p-1$. 
In the former case when $n=p+1$, none of the inequalities in Lemma~\ref{lem:sn_stabsbyp} hold, so $\Delta$ has at least one simplex whose stabilizer has order prime to $p$. In the latter case when $n=p+b$, the inequality $n-b\geq p$ holds, so by Lemma~\ref{lem:sn_stabsbyp} every simplex in $\Delta$ is fixed by a $p$-subgroup. Hence, $\Delta$ is a synthetic building for $S_n$ at $p$ if and only if $n=p+b$ and either $b=p+1$ or $1\le b\le p-1$. 

If these conditions hold making $\Delta$ a synthetic building, then $G\bs\Delta$ is mod $p$ acyclic by Corollary~\ref{cor:modp}. This combined with the fact that $G\bs\Delta$ is a graph implies $G\bs\Delta$ is contractible. Thus, if $\Delta$ is a synthetic building it is necessarily slender.
\end{proof}

\begin{corollary}\label{cor:sbs_1b_an}
Let $n\ge p$ and consider the action of $A_n$ on $\Delta=\Delta^n_{1,b}$.
Then $\Delta$ is a synthetic building for $A_n$ at $p$
if and only if at least one of the following holds:
    \begin{enumerate}[(1)]
        \item $n=p+b$ and either $b=p+1$ or $1\le b\le p-1$, or 
        \item $n=7$, $p=2$, and $b=3$.
    \end{enumerate}
In both cases, $\Delta$ is slender.
\end{corollary}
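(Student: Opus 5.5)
The plan is to run the argument of Corollary~\ref{cor:sbs_1b_an} in parallel with that of Corollary~\ref{cor:sbs_1b_sn}, replacing Lemma~\ref{lem:sn_stabsbyp} by Lemma~\ref{lem:an_stabsbyp}.

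\textbf{Reduction to elements of order $p$.} By Lemma~\ref{lem:contractible_pelems}, the fixed point condition reduces to checking $\Delta^{\langle g\rangle}\simeq\bullet$ for every element $g\in A_n$ of order $p$. Theorem~\ref{thm:fps_1b} settles each such $g$ in terms of its number $f$ of fixed points: we need $f=1$ with $p\notdivide b$, or $f=b$ with $b=p+1$ or $1\le b\le p-1$.

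\textbf{Odd $p$.} Every product of disjoint $p$-cycles is even, so $A_n$ contains exactly the same cycle types of elements of order $p$ as $S_n$. The case analysis of Corollary~\ref{cor:sbs_1b_sn} then carries over verbatim:
\begin{enumerate}[(i)]
\item $n=kp$ is excluded, since some $g$ has no fixed points;
\item $n>3p$ is excluded, since three distinct values of $f$ cannot all lie in $\{1,b\}$;
\item the remaining possibilities are $n=p+1$ with $p\notdivide b$, or $n=p+b$ with $b=p+1$ or $1\le b\le p-1$.
\end{enumerate}
For the stabilizer condition, Lemma~\ref{lem:an_stabsbyp} for odd $p$ has the same criterion as Lemma~\ref{lem:sn_stabsbyp}. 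This rules out $n=p+1$, where none of the three inequalities holds, and confirms $n=p+b$, where $n-b\ge p$. This gives case (1).

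\textbf{The case $p=2$.} Here the involutions in $A_n$ are exactly the products of an even number $2\lambda$ of disjoint transpositions, so $f=n-4\lambda$ with $1\le\lambda\le\lfloor n/4\rfloor$. The argument splits by how many values of $\lambda$ occur.
\begin{enumerate}[(i)]
\item If $n<4$, there are no involutions in $A_n$ and $\Delta^{\langle g\rangle}$ is never tested. However, $2\mid |A_n|$ is false only for $n\le 3$. Since $n\ge p=2$, this leaves $n=2,3$, where $A_n$ has odd order and no synthetic building is nonempty, while $\Delta$ is nonempty. So these $n$ are excluded.
\item If $4\le n<8$, there is a single class of involutions, with $f=n-4$. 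We need either $f=1$ and $b$ odd, i.e.\ $n=5$ with $b\in\{1,3\}$; or $f=b$ with $b\in\{1,3\}$, since $b\le p-1=1$ or $b=p+1=3$. The second option gives $(n,b)=(5,1)$ or $(7,3)$. (The value $f=0$ at $n=4$ is excluded.)
\item If $8\le n<12$, the two classes give $f=n-4$ and $f=n-8$, which must be $b$ and $1$ respectively. This forces $n=9$ and $b=5$, but $b=5$ is not allowed.
\item If $n\ge 12$, there are three distinct values of $f$, which cannot all lie in $\{1,b\}$, so these $n$ are excluded.
\end{enumerate}
So the fixed point condition leaves only $(n,b)\in\{(5,1),(5,3),(7,3)\}$.

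\textbf{Stabilizer condition at $p=2$.} We now test each survivor against Lemma~\ref{lem:an_stabsbyp} with $a=1$.
\begin{enumerate}[(i)]
\item For $(5,1)$: here $(a,b)=(1,1)$ with $n\ge5$, so the lemma applies and this is a synthetic building. Since $5=p+b$ with $b=1\le p-1$, it falls under case (1).
\item For $(5,3)$: the conditions are $n-b=2\ge2$, $b-a=2\ge2$, and $a\ge2$ fails. Two of the three hold, so the stabilizer condition is satisfied. This needs rechecking, because $(5,3)$ would fall outside both (1) and (2). The fixed point check should also be redone: a double transposition on $5$ points fixes $1$ point, so $f=1$ and $b=3$ is odd, and the condition appears to hold. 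This is the step I expect to be the main obstacle. I would first recheck directly whether $\Delta^5_{1,3}$ has a simplex whose stabilizer has odd order, and whether Theorem~\ref{thm:fps_1b} is being applied correctly. The point is that for $p=2$ an element of order $2$ is a product of several $2$-cycles, not a single $p$-cycle as the theorem's proof implicitly assumes in places. If the theorem's case $f=1$, $p\notdivide b$ were genuinely fine here, the statement would have to include $(5,3)$. So I would verify by hand, via $\Delta^5_{1,3}\cong\Delta^5_{1,2}$ under complementation, that $\Delta^{\langle g\rangle}$ is disconnected or has a cycle, expecting that the fixed $3$-subsets through the fixed point, namely the fixed point together with either transposition, give two branches. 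These are harmless, so I would then look instead at the stabilizer parities.
\item For $(7,3)$: $n-b=4$ and $b-a=2$ both hold, so the stabilizer condition is satisfied. This gives case (2).
\end{enumerate}

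\textbf{Slenderness.} As in Corollary~\ref{cor:sbs_1b_sn}, slenderness follows from Corollary~\ref{cor:modp}: the orbit space $G\bs\Delta$ is a mod $p$ acyclic graph, hence contractible.
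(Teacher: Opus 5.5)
Your approach is the paper's: reduce via Lemma~\ref{lem:contractible_pelems} to elements of order $p$, apply Theorem~\ref{thm:fps_1b} class by class, then use Lemma~\ref{lem:an_stabsbyp}. Your list of survivors at $p=2$, namely $(n,b)\in\{(5,1),(5,3),(7,3)\}$, is correct, and so is your exclusion of $n=2,3$. The gap is in the final step, where you match these pairs to the cases of the statement: you have swapped the two pairs with $n=5$.

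For $(5,3)$ we have $p+b=2+3=5=n$ and $b=3=p+1$, so it lies in case (1). Your ``main obstacle'' is therefore an arithmetic slip. Your worry about Theorem~\ref{thm:fps_1b} is also unfounded. Its proof allows any number $\lambda$ of $p$-cycles. When $f=1$ and $p\notdivide b$, every fixed $b$-subset contains the fixed point, so $\Delta^{\langle g\rangle}$ is a star. Conversely, for $(5,1)$ we have $p+b=3\neq 5$ and $n\neq 7$, so this pair satisfies neither (1) nor (2).

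Once the arithmetic is fixed, your own correct computations show that the equivalence as stated fails at $p=2$ in both directions.
\begin{itemize}
\item $\Delta^5_{1,1}$ consists of five disjoint edges, each with stabilizer $A_4$. It is a synthetic building for $A_5$ at $2$, yet it is not covered by (1) or (2).
\item The pair $(n,b)=(3,1)$ satisfies (1), since $n=p+b$ and $b=1\le p-1$. But, exactly as your step (i) says, $\Delta^3_{1,1}$ cannot be a synthetic building for the odd-order group $A_3$.
\end{itemize}

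The paper's own proof has the same two oversights. It passes to $n\ge 4$ without noticing that $(n,b)=(3,1)$ lies in case (1). In the case $n=5$, $f=1$, it records only $b=3$, although $b=1$ is also odd.

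So the proposal cannot be completed as a proof of the corollary as written. What your argument does prove is a corrected statement. For odd $p$ the answer is case (1). For $p=2$ the answer is $(n,b)\in\{(5,1),(5,3),(7,3)\}$. In every case slenderness follows from Corollary~\ref{cor:modp}, as you say.
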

\begin{proof}
     To prove that $\Delta^P\simeq \bullet$ for all nonidentity $p$-subgroups $P$, by Lemma~\ref{lem:contractible_pelems} it suffices to show that the claim holds for $P=\langle g\rangle$, for all non-identity $p$-elements $g$.

    If $p$ is odd, then the argument is the same as that of Corollary~\ref{cor:sbs_1b_sn} because $p$-cycles are even permutations (whose result is the first possibility listed). So suppose $p=2$. Since we want $\Delta^{\langle g\rangle}\simeq \bullet$ for all non-identity elements of order 2 of $A_n$, we need $\Delta^{\langle g\rangle}\simeq \bullet$ for all possible cycle types of $g$. These possible cycle types all consist of an even number of $p$-cycles on at most $n$ symbols. So that elements of order 2 exist, we take $n\geq 4$.

    Let $f$ be the number of points fixed by a given element $g$ of order 2. By Theorem~\ref{thm:fps_1b}, we have $\Delta^{\langle g\rangle}\simeq \bullet$ if and only if either $f=1$ and $b$ is odd or $f=b=3$. If $4\leq n<8$, then the only possible cycle type for $g$ is two disjoint $2$-cycles. Note that $f=1$ if and only if $n=5$ and $f=3$ if and only if $n=7$. In both of these cases, the necessary inequalities in Lemma~\ref{lem:an_stabsbyp} hold so that every simplex in $\Delta$ has stabilizer with order divisible by $p$.
    Thus $\Delta^5_{1,3}$ is a slender synthetic building for $A_5$ at $p=2$ (which is a special case of the first possibility listed) and $\Delta^7_{1,3}$ is a slender synthetic building for $A_7$ at $p=2$, while no further $\Delta^n_{1,b}$ are synthetic buildings for $A_5$, $A_6$ and $A_7$.

    Now suppose $n\geq8$. Then one of the possible cycle types of $g$ is two 2-cycles. Such a $g$ would have at least $f=4$ fixed points, meaning that $\Delta^n_{1,b}$ is not a synthetic building for $A_n$. Hence, the listed possibilities are the complete list for which $\Delta$ is a synthetic building.

    Just as in the previous corollary, if $\Delta$ is synthetic building, then  $G\bs\Delta$ is contractible. Thus, if $\Delta$ is a synthetic building it is slender.
\end{proof}

Corollaries~\ref{cor:sbs_1b_sn} and \ref{cor:sbs_1b_an} provides us with the constructions for a finite number of slender synthetic buildings for a given $S_n$ or $A_n$. More specifically, if we fix $n$ then, allowing the prime $p$ to vary, there exist $\pi(n)-\pi((\lfloor\frac{n-1}{2}\rfloor-1)$ unique slender synthetic buildings of the form $\Delta^n_{1.b}$ for both $S_n$ and $A_n$, where $\pi(n)$ is the prime-counting function, 
plus an additional synthetic building for $A_n$ when $n=7$. Unfortunately, it is not guaranteed that each of these synthetic buildings is minimal, as we see in the next example. 

\begin{example}
\label{Delta13-example}
Take $\Delta=\Delta_{1,3}^5$ acted on by $A_5$. By Corollary~\ref{cor:sbs_1b_an}, this is a slender synthetic building at $p=2$. This graph is a exotic synthetic building, though it is not minimal. Take $\Delta'$ to be the subgraph of $\Delta$ containing just the five 1-subsets. Each has stabilizer $A_4$, which is strongly 2-embedded in $A_5$, implying that $\Delta'$ is a synthetic building. A depiction of $\Delta$ is given below.
    \begin{center}
        \begin{tikzpicture}[scale=0.9]
            \begin{scope}[vert/.style={circle, fill=black, thick, inner sep=2pt, minimum size=0.25cm}] 
                \node(124) at (0,1) [vert, label=above:124]{};
                \node(235) at (0.951,0.309) [vert, label={[label distance=-0.1cm]18:235}]{};
                \node(134) at (0.588,-0.809) [vert, label={[label distance=-0.1cm]306:134}]{};
                \node(245) at (-0.588,-0.809) [vert, label={[label distance=-0.1cm]234:245}]{};
                \node(135) at (-0.951,0.309) [vert, label={[label distance=-0.1cm]162:135}]{};
                \node(2) at (1.175,1.618) [vert, label={[label distance=-0.1cm]54:2}]{};
                \node(3) at (1.902,-0.618) [vert, label={[label distance=-0.1cm]342:3}]{};
                \node(4) at (0,-2) [vert, label=below:4]{};
                \node(5) at (-1.902,-0.618) [vert, label={[label distance=-0.1cm]198:5}]{};
                \node(1) at (-1.175,1.618) [vert, label={[label distance=-0.1cm]126:1}]{};
                \node(123) at (0,3) [vert, label=above:123]{};
                \node(234) at (2.853,0.927) [vert, label={[label distance=-0.1cm]18:234}]{};
                \node(345) at (1.763,-2.427) [vert, label={[label distance=-0.1cm]306:345}]{};
                \node(145) at (-1.763,-2.427) [vert, label={[label distance=-0.1cm]234:145}]{};
                \node(125) at (-2.853,0.927) [vert, label={[label distance=-0.1cm]162:125}]{};
            \end{scope}
            \draw (124) -- (1);
            \draw (124) -- (2);
            \draw (124) -- (4);
            \draw (235) -- (2);
            \draw (235) -- (3);
            \draw (235) -- (5);
            \draw (134) -- (1);
            \draw (134) -- (3);
            \draw (134) -- (4);
            \draw (245) -- (2);
            \draw (245) -- (4);
            \draw (245) -- (5);
            \draw (135) -- (1);
            \draw (135) -- (3);
            \draw (135) -- (5);
            \draw (123) -- (1);
            \draw (123) -- (2);
            \draw (234) -- (2);
            \draw (234) -- (3);
            \draw (345) -- (3);
            \draw (345) -- (4);
            \draw (145) -- (4);
            \draw (145) -- (5);
            \draw (125) -- (5);
            \draw (125) -- (1);
            \draw (123) arc (90:-32:2.3);
            \draw (234) arc (18:-104:2.3);
            \draw (345) arc (-54:-176:2.3);
            \draw (145) arc (-126:-248:2.3);
            \draw (125) arc (-198:-320:2.3);
        \end{tikzpicture}
    \end{center}
\end{example}

\subsection{A criterion for the absence of exotic minimal synthetic buildings}
\label{no-exotic-minimal-subsection}

An orbit diagram for a $G$-simplicial complex was already defined in Definition~\ref{slender-orbit-diagram-definitions}. In what follows, in a special case when the simplicial complex has dimension 1, it will be useful to make a more precise definition. 

\begin{definition}
Let $\Gamma$ be a $G$-graph for which $G\bs\Gamma$ is a tree.
We define an \textit{orbit diagram} for $\Gamma$ to be a subtree of $\Gamma$ whose vertices and edges form a set of orbit representatives for the orbits of $G$ on $\Gamma$, and where the stabilizer in $G$ of each vertex and edge is shown by the corresponding vertex and edge.
\end{definition}

It turns out that an orbit diagram that is a tree in this sense uniquely determines the $G$-graph $\Gamma$ up to isomorphism. This could be verified by a direct argument, but if we make reference to the machinery of scwols (small categories without loops), of Bridson and Haefliger in~\cite{Bri1999}, the result follows immediately.
\begin{theorem}\label{thm:uniqueGgraphs}
    Consider an orbit diagram for a $G$-graph which is a tree. Then there exists a unique $G$-graph with this orbit diagram, up to $G$-isomorphism.
\end{theorem}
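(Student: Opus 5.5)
Given an orbit diagram $T$ (a finite tree with vertex groups $G_v$ and edge groups $G_e\le G_u\cap G_w$ for each edge $e=\{u,w\}$), I will build the graph explicitly and then show that any $G$-graph with this diagram is isomorphic to it.

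\emph{Existence.} Take as vertex set $\bigsqcup_{v\in T} G/G_v$ and as edge set $\bigsqcup_{e\in T} G/G_e$. The edge $gG_e$ with $e=\{u,w\}$ joins $gG_u$ and $gG_w$; this is well defined because $G_e\le G_u\cap G_w$. I must check three things.
\begin{itemize}
\item The result is a genuine simplicial graph. There are no loops, since $u\ne w$ lie in different orbits. There are no repeated edges: two edges $gG_e$, $g'G_{e'}$ with the same endpoints must have $e=e'$, because $T$ has at most one edge between two orbit representatives.
\item If repeated edges between the same pair of cosets can occur when $G_e$ is strictly smaller than $G_{gu}\cap G_{gw}$, I will treat the result as a graph in the scwol sense of Bridson--Haefliger rather than insist on simplicial.
\item The stabilizers shown in the diagram are correct, and $T$ (the cosets $1G_v$, $1G_e$) embeds as a subtree of orbit representatives.
\end{itemize}

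\emph{Uniqueness.} Let $\Gamma$ be any $G$-graph containing $T$ as an orbit diagram. Define $\phi$ on vertices by $gG_v\mapsto gv$ and on edges by $gG_e\mapsto ge$. This is well defined because the stabilizers are exactly $G_v$ and $G_e$. It is bijective on each orbit because $T$ meets each orbit exactly once. It is $G$-equivariant by construction. It preserves incidence: the edge $ge$ has endpoints $gu$ and $gw$, since $e$ joins $u$ and $w$ inside $T$. So $\phi$ is a $G$-isomorphism.

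The main obstacle is the point hidden in the phrase ``orbit diagram'': the edge $e$ of $T$ joins exactly the representatives $u,w$ in $T$, and not some translates of them. This is guaranteed because $T$ is a subtree of $\Gamma$. Without it, the incidence data would involve twisting elements, and the diagram would not determine $\Gamma$; this is the general complex-of-groups picture. Since $T$ is a tree and lifts isomorphically, all twisting elements can be taken trivial, and this is where the Bridson--Haefliger result (the developability and uniqueness of the development of a simple complex of groups over a tree) gives the statement directly. If I cite them instead of arguing directly, I would check that the $G$-graph corresponds to a scwol whose quotient complex of groups is the simple one given by $T$, so that their uniqueness statement applies.
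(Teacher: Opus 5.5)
Your argument is correct, and it takes a different route from the paper. The paper gives no construction: it cites Theorem 2.13(1) of Bridson--Haefliger on developments of complexes of groups, and only remarks that a direct argument is possible.

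You supply that direct argument. Existence comes from the coset graph with vertex set $\bigsqcup_v G/G_v$ and edge set $\bigsqcup_e G/G_e$. Uniqueness comes from the orbit map $gG_v\mapsto gv$, $gG_e\mapsto ge$. This map is well defined and bijective because the labels are exactly the stabilizers and $T$ meets each orbit once. It preserves incidence because $T$ is an actual subtree of $\Gamma$, which is precisely why all twisting elements are trivial. This is the coset-graph description the paper itself uses informally (Example~\ref{ex:c3}, proof of Theorem~\ref{thm:infiniteGgraphs}), so your version is a self-contained replacement for the citation. The citation buys brevity, and it handles multigraphs without comment.

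One step should be tightened rather than hedged. Your first bullet only shows $e=e'$. It does not rule out distinct cosets $gG_e\ne g'G_e$ with the same endpoints, and these occur exactly when $G_e\subsetneq G_u\cap G_w$. In that case the simplicial edge stabilizer would be $G_u\cap G_w$ rather than $G_e$, contradicting your third bullet.

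For an orbit diagram of a $G$-simplicial graph, however, $G_e=G_u\cap G_w$ holds automatically. Stabilizers fix edges pointwise, and an element fixing both endpoints fixes the edge, since an edge is determined by its endpoints. With this equality, $g^{-1}g'\in G_u\cap G_w=G_e$ forces $gG_e=g'G_e$. So there are no multiple edges and the edge stabilizers are correct. There are also no inversions, since $u$ and $w$ lie in different orbits.

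Your multigraph fallback is needed only for abstract labelled trees with $G_e\subsetneq G_u\cap G_w$. The paper does use these implicitly, for the graphs $\Gamma_n$ with vertex groups $G$ and edge groups $S$. In that setting your uniqueness argument goes through unchanged.
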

\begin{proof}
    The result follows directly from Theorem 2.13(1) in \cite{Bri1999}.
\end{proof}


We now present a theorem whose goal is to prove that in many cases there are no exotic minimal synthetic buildings.

\begin{theorem}
\label{uniqueness-theorem}
    Let $\Delta$ be a non-trivial minimal synthetic building for $G$ at $p$. Let $J,K$ be non-conjugate maximal subgroups of $G$ and $H=J\cap K$. Suppose that
    \begin{enumerate}
        \item $\Delta^J$ and $\Delta^K$ are non-empty,
        \item $\Delta^H$ is path-connected.
        \item If $H\subset A$ is a subgroup that properly contains $H$ then $A$ is one of $K,J$ or $G$,
        \item The $G$-graph $\Gamma$ with orbit diagram 
        \scalebox{0.75}
            {\begin{tikzpicture}
    \fill (0,0) circle (0.1)
        node[above=1pt] {$J$};
    \fill (2,0) circle (0.1)
        node[above=1pt] {$K$};
    \draw (0,0)--(2,0)
        node[above=1pt,pos=0.5] {$H$};
\end{tikzpicture}}
is a synthetic building for $G$ at $p$.
    \end{enumerate}
Then $\Delta\simeq_G \Gamma$.
\end{theorem}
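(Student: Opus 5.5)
The plan is to show that $\Delta$ is actually isomorphic to $\Gamma$, by building an equivariant embedding of $\Gamma$ into $\Delta$ and then invoking minimality.

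\emph{Step 1: fixed vertices.} By (1), choose $x\in\Delta^J$ and $y\in\Delta^K$. The stabilizer $G_x$ contains the maximal subgroup $J$, so $G_x$ is $J$ or $G$. Suppose $G_x=G$. Then $\{x\}$ is a $G$-invariant subcomplex. Since $\Delta$ is a synthetic building, $p$ divides $|G|$, so $\{x\}$ is a synthetic building. Minimality then forces $\Delta=\{x\}$, which contradicts non-triviality. Hence $G_x=J$, and by the same argument $G_y=K$. Because $J$ and $K$ are not conjugate, $x$ and $y$ lie in different $G$-orbits; in particular $x\neq y$. Both $x$ and $y$ lie in $\Delta^H$, since $H\le J$ and $H\le K$.

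\emph{Step 2: an $H$-fixed edge from $x$ to $y$.} By (2), there is a path from $x$ to $y$ inside $\Delta^H$. I may assume this path is an edge path in the $1$-skeleton of $\Delta^H$, since $\Delta^H$ is a subcomplex. The stabilizer of each vertex or edge on the path contains $H$, so by (3) it is one of $H$, $J$, $K$ or $G$.
\begin{itemize}
\item No vertex of $\Delta$ is fixed by $G$. Such a vertex would form an invariant synthetic building, contradicting minimality as in Step~1. The same argument rules out any simplex with stabilizer $G$.
\item So every vertex on the path has stabilizer $H$, $J$ or $K$.
\end{itemize}
I would like the path to contain an edge $\{u,v\}$ with $G_u=J$ and $G_v=K$ (up to relabeling). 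The edge stabilizer contains $H$ and lies in $J\cap K=H$, so it equals $H$. Such an edge gives a $G$-map $\Gamma\to\Delta$: send $gJ\mapsto gu$, $gK\mapsto gv$ and $gH\mapsto g\{u,v\}$. This map is well defined, and it is injective on vertices and edges because the stabilizers match exactly. Its image is therefore an invariant subgraph $G$-isomorphic to $\Gamma$. By Theorem~\ref{thm:uniqueGgraphs} it suffices to identify stabilizers. By (4) the image is a synthetic building, and minimality gives $\Delta=\Gamma$.

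\emph{Main obstacle.} The hard part is producing that $J$--$K$ edge. The path may pass through vertices whose stabilizer is exactly $H$, and it may go $J$-vertex, $H$-vertex, $K$-vertex without any direct $J$--$K$ edge. My fallback is to work with orbits instead. The subcomplex $\Delta_0$ spanned by $G\cdot(\text{path})$ is invariant but need not be a synthetic building, so I will try a homotopy argument instead of an isomorphism. The approach is as follows.
\begin{itemize}
\item Map $\Gamma$ equivariantly into $\Delta$ by sending the edge orbit $G/H$ to the $G$-translates of the path from $x$ to $y$. This gives a $G$-map $\Gamma\to\Delta$, well defined because the path is $H$-fixed, after subdividing $\Gamma$ if necessary.
\item Check that this map is a $G$-homotopy equivalence using Bredon's criterion. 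For every subgroup $L$ with $O_p(L)\ne1$, both $\Gamma^L$ and $\Delta^L$ are contractible by (4) and by the definition of a synthetic building, so the map is an equivalence on those fixed point sets.
\item For subgroups $L$ with $O_p(L)=1$, compare both sides through the $p$-local structure, in the style of Theorem~\ref{structure-theorem-for-chain-complex}. This step is the genuine sticking point.
\end{itemize}
The conclusion asked for is only $\simeq_G$, which suggests the intended proof accepts this homotopy route. I would therefore first try to show, using (3) and minimality, that the path can be shortened to a single edge, and fall back on the Bredon argument if that fails.
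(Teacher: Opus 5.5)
\textbf{There is a genuine gap: neither of your two routes reaches the conclusion.} Step 1, the exclusion of $G$-fixed simplices, and the argument for the case where a direct $J$--$K$ edge exists are all correct. The trouble is what comes after.

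\textbf{The single-edge route cannot work in general.} In the setting of Corollary~\ref{Lie-rank-2-uniqueness}, $\Delta=\sd\Gamma$ is a non-trivial minimal synthetic building (Proposition~\ref{subdivision-proposition}) satisfying (1)--(4). Yet every edge of $\sd\Gamma$ joins a vertex of $\Gamma$ to the barycenter of an edge, and that barycenter has stabilizer conjugate to $H$. So no $J$-vertex is ever adjacent to a $K$-vertex, and your edge cannot be produced.

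\textbf{The Bredon fallback never uses minimality, and without it the statement is false.} For example, $\Sigma(\Gamma)$ is a non-trivial synthetic building satisfying (1)--(4), but it has two $G$-fixed points, so it is not $\simeq_G\Gamma$. Hence the subgroups $L$ with $O_p(L)=1$ (such as $L=1$ and $L=G$) are not a technicality; handling them is the whole content of the theorem.

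\textbf{The missing idea is to apply minimality to a subcomplex that is an equivariant subdivision of $\Gamma$.}
\begin{enumerate}
\item Choose $x\in\Delta^J$, $y\in\Delta^K$ and an edge path $x=a_0,a_1,\dots,a_n,a_{n+1}=y$ in $\Delta^H$ of minimal length over all such choices.
\item By (3), and since no vertex is $G$-fixed, each interior $a_i$ has stabilizer exactly $H$. A $J$- or $K$-stabilizer would give a shorter path, so every edge of the path also has stabilizer $H$.
\item The interior vertices lie in distinct $G$-orbits. Suppose $a_j=ga_i$ with $i<j$. Then ${}^gH=H$, so $g^{-1}$ preserves $\Delta^H$. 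Now $a_0,\dots,a_i,g^{-1}a_{j+1},\dots,g^{-1}y$ is a shorter path in $\Delta^H$. Its endpoint is fixed by ${}^{g^{-1}}K$, which properly contains $H$ and so equals $K$ by (3) and non-conjugacy. This contradicts minimality of the path.
\item Hence the $G$-orbits of the vertices and edges of the path form a $G$-subgraph with orbit diagram $J - H - \cdots - H - K$, all edge stabilizers being $H$. Each $H$-vertex meets exactly one edge from each adjacent edge orbit, so by Theorem~\ref{thm:uniqueGgraphs} this subgraph is an equivariant subdivision of $\Gamma$.
\item It is therefore $G$-homotopy equivalent to $\Gamma$, so it is a weak synthetic building. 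Its stabilizers $J,K,H$ have order divisible by $p$ by (4), so it is a synthetic building, and minimality forces it to equal $\Delta$.
\end{enumerate}
This is exactly where you wrote that the $G$-saturation of the path ``need not be a synthetic building''. That is true for an arbitrary path, but false for a shortest one, and that is the step your proposal is missing.
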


\begin{proof}
The conditions imply that $\emptyset\ne \Delta^J\subseteq\Delta^H$, $\emptyset\ne \Delta^K\subseteq\Delta^H$, and that any simplex $\sigma\in\Delta^H$ has its stabilizer equal to precisely one of $H,J$ or $K$. Let $x$ be a vertex in $\Delta^J$ and $y$ a vertex in $\Delta^K$. Because $\Delta^H$ is path-connected there is a path of edges in $\Delta^H$ of the form
\begin{center}    
\begin{tikzpicture}
    \fill (0,0) circle (0.1)
        node[left=1pt] {$x$};
    \fill (1,0) circle (0.1)
        node[above=1pt] {$a_1$};
    \draw (0,0)--(1,0);
    \draw (1,0)--(2,0);
    \node at (2.4,0) {$\ldots$};
    \draw (2.8,0)--(3.8,0);
    \fill (3.8,0) circle (0.1)
        node[above=1pt] {$a_n$};
    \draw (3.8,0)--(4.8,0);
    \fill (4.8,0) circle (0.1)
        node[right=1pt] {$y$};
\end{tikzpicture}
\end{center}
     with stabilizer groups $G_{a_i}$ containing $H$, so these stabilizers must be $J,K$ or $H$. Choose $x$ and $y$ so as to have a shortest such path. Now $G_{a_i}=H$ for all $i$, and all edge stabilizers are $H$. If some $a_i$ and $a_j$ with $i<j$ lie in the same $G$-orbit, so $a_j=ga_i$ for some $g\in G$, then we can shorten the path to
\begin{center}    
\begin{tikzpicture}
    \fill (0,0) circle (0.1)
        node[left=1pt] {$x$};
    \draw (0,0)--(1,0);
    \node at (1.4,0) {$\ldots$};
    \draw (1.8,0)--(2.8,0);
    \fill (2.8,0) circle (0.1)
        node[above=1pt] {\footnotesize$a_i=g^{-1}a_j$};
    \draw (2.8,0)--(4.8,0);
    \fill (4.8,0) circle (0.1)
        node[above=1pt] {\footnotesize$g^{-1}a_{j+1}$};
    \draw (4.8,0)--(5.8,0);
    \node at (6.2,0) {$\ldots$};
    \draw (6.6,0)--(7.6,0);
    \fill (7.6,0) circle (0.1)
        node[right=1pt] {$g^{-1}y$};
\end{tikzpicture}
\end{center}
    In doing this, note that the stabilizers of $a_i$ and $a_j=ga_i$ are both $H$, and the stabilizer of $ga_i$ can also be written ${}^gH$, so $H={}^gH$ and $g\in N_G(H)$. Thus all the $g^{-1}a_k$ have stabilizer $H$.
    Because the path was chosen to be shortest, we deduce that all the $a_i$ lie in distinct $G$-orbits. By taking the union of the $G$-orbits of this path we construct a $G$-subcomplex of $\Delta$ with orbit diagram
\begin{center}
\begin{tikzpicture}
    \fill (0,0) circle (0.1)
        node[left=1pt] {$J$};
    \fill (2,0) circle (0.1)
        node[above=1pt] {$H$};
    \draw (0,0)--(2,0)
        node[above=1pt,pos=0.5] {$H$};
    \draw (2,0)--(3,0);
    \node at (3.4,0) {$\ldots$};
    \draw (3.8,0)--(4.8,0);
    \fill (4.8,0) circle (0.1)
        node[above=1pt] {$H$};
    \fill (6.8,0) circle (0.1)
        node[right=1pt] {$K$};
    \draw (4.8,0)--(6.8,0)
        node[above=1pt,pos=0.5] {$H$};
\end{tikzpicture}
\end{center}
which is equivariantly homotopy equivalent to a complex with the same orbit diagram as $\Gamma$. By Theorem~\ref{thm:uniqueGgraphs}, that complex is $G$-isomorphic to $\Gamma$, so it is a synthetic building. Minimality of $\Delta$ implies $\Delta$ is this subcomplex.
\end{proof}

\begin{corollary}
\label{Lie-rank-2-uniqueness}
    Let $G$ be a finite group of Lie type in characteristic $p$ of Lie rank 2. Then $G$ has a unique non-trivial minimal synthetic building at $p$ up to $G$-homotopy equivalence, namely its building in the sense of Tits.
\end{corollary}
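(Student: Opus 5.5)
The plan is to apply Theorem~\ref{uniqueness-theorem} with $J=P_1$ and $K=P_2$ the two maximal parabolic subgroups containing a fixed Borel subgroup $B=UT$, where $U$ is a Sylow $p$-subgroup. Then $H=J\cap K=B$. The group $G$ has Lie rank 2, so the Tits building is a generalized polygon, namely a graph whose vertices are the cosets of $P_1$ and $P_2$ and whose edges are the cosets of $B$. This is exactly the coset graph $\Gamma$ with orbit diagram $P_1 - B - P_2$, and it is a synthetic building by the earlier example (via \cite{The1991,Pit2024}). So hypothesis (4) holds, and $P_1,P_2$ are non-conjugate. We also need $J$ and $K$ to be maximal subgroups. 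Maximal parabolics are maximal subgroups when $G$ is generated by the appropriate root subgroups. If the precise form of $G$ (for example when it contains diagonal or field automorphisms) causes trouble, we would pass to the relevant subgroups containing $B$, which are exactly the parabolics by the $BN$-pair theory.

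Hypothesis (3) is the standard fact about groups with a $BN$-pair: every overgroup of $B$ is a standard parabolic $P_I$ for some $I\subseteq S=\{s_1,s_2\}$. With two simple reflections, the only such subgroups are $B$, $P_1$, $P_2$ and $G$. So any $A$ properly containing $B$ is one of $P_1$, $P_2$, $G$.

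For hypothesis (1), note that $O_p(P_i)=U_i\ne 1$ is the unipotent radical. By condition $(1')$ of Proposition~\ref{equiv-condition-proposition}, $\Delta^{P_i}$ is contractible, hence non-empty. For hypothesis (2), $O_p(B)=U\ne1$, so $\Delta^B$ is contractible and in particular path-connected. Here $\Delta$ is any non-trivial minimal synthetic building. Theorem~\ref{uniqueness-theorem} then gives $\Delta\simeq_G\Gamma$, which is the Tits building. Finally, the building itself is minimal. Its vertex orbits $G/P_1$ and $G/P_2$ are not synthetic buildings on their own: for instance $\Delta^{U}$ must be contractible, but $U$ fixes one vertex of each type (its fixed points form a single edge), so a single orbit has fixed set a point only if $U$ fixes a unique vertex, and then $\Delta^{U_1}$ would have to be checked. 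The simpler argument is that any $G$-invariant subcomplex must contain vertices fixed by $P_1$ and by $P_2$, and must be connected through $\Delta^B$, so it is everything. This gives existence and uniqueness.

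The main obstacle is verifying hypothesis (3) and the maximality of $P_1,P_2$ across all the variants allowed by the phrase ``finite group of Lie type''. The cleanest approach is to invoke the Tits theorem on overgroups of $B$ in a group with a split $BN$-pair. I expect this to be the step requiring care with conventions; the other hypotheses are immediate from Proposition~\ref{equiv-condition-proposition}.
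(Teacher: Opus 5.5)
Your proposal is correct and is essentially the paper's proof: apply Theorem~\ref{uniqueness-theorem} with $H=B$ and $J,K$ the two maximal parabolics. Hypotheses (1) and (2) come from $O_p\ne 1$ via $(1')$, (3) from the fact that the overgroups of $B$ are exactly $B,P_1,P_2,G$, and (4) from $\Gamma$ being the Tits building.

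That same overgroup classification already gives maximality of $P_1,P_2$, so your hedge about variants of $G$ is unnecessary. Your extra check that the building is itself minimal, using that $\Gamma^B$ is a single edge, fills in a point the paper leaves implicit.
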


\begin{proof}
    Take $H$ to be a Borel subgroup and let $J$, $K$ be the two maximal parabolic subgroups that contain $H$. If $\Delta$ is a non-trivial minimal synthetic building then the conditions of Theorem~\ref{uniqueness-theorem} are satisfied for the following reasons. Conditions (1) and (2) hold because $H,J,K$ all have $O_p\ne 1$ so the fixed points are contractible. Condition (3) holds by standard properties of parabolic subgroups and (4) holds because $\Gamma$ is the building for $G$ in the sense of Tits. The corollary is now a consequence of Theorem~\ref{uniqueness-theorem}.
\end{proof}

This corollary applies to a number of small groups, such as $GL(3,2)$, $A_6$ and $S_6$ at $p=2$, sometimes through exceptional isomorphisms with groups of Lie type. We can extend the argument of Theorem~\ref{uniqueness-theorem} to more general situations, exemplified in the following theorem.

\begin{theorem}
    At $p=2$ the symmetric group $S_5$ has a unique non-trivial minimal synthetic building.
\end{theorem}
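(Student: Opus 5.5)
The plan is to adapt the proof of Theorem~\ref{uniqueness-theorem} to a situation where hypothesis~(3) fails. The first step is to pin down the candidate building. The $2$-radical subgroups of $S_5$ are, up to conjugacy, $\langle(12)\rangle$ with normalizer $S_2\times S_3$, the normal four-group of $A_4$ with normalizer $S_4$, and a Sylow subgroup $D_8$, which is self-normalizing. In $\BpCpx{S_5}{2}$, each vertex of the $D_8$-orbit is joined to exactly one four-group vertex, by an edge with stabilizer $D_8$. It is also joined to two $C_2$-vertices, by edges with stabilizer of order $4$. I would collapse the $D_8$-edges equivariantly. This should leave a graph $\Gamma$ with orbit diagram having vertices $J=S_4$ (the stabilizer of $5$) and $K=S_{\{1,2\}}\times S_{\{3,4,5\}}$, joined by an edge with stabilizer $H=J\cap K=\langle (12),(34)\rangle$. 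By Theorem~\ref{thm:uniqueGgraphs}, $\Gamma$ is determined by this diagram. Since $\Gamma\simeq_G\SpCpx{S_5}{2}$, and every stabilizer in it has even order, $\Gamma$ is a synthetic building. I would double-check this collapse by comparing Euler characteristics, or the Lefschetz modules.

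Next, let $\Delta$ be a non-trivial minimal synthetic building. Since $O_2(S_5)=1$, Theorem~\ref{p-group-theorem} shows $\Delta$ has no $G$-fixed point. I would then list the overgroups of $H$ in $S_5$, which I expect to be exactly $H$, $D=N_G(H)\cong D_8$, $J$, $K$, $K'=S_{\{3,4\}}\times S_{\{1,2,5\}}$, and $G$. Here $K'={}^nK$ for some $n\in N_G(H)$, for instance $n=(13)(24)$. All of $H,D,J,K,K'$ have $O_2\ne 1$, so by Proposition~\ref{equiv-condition-proposition} their fixed point sets in $\Delta$ are contractible. In particular they are non-empty, and $\Delta^H$ is connected. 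Every simplex of $\Delta^H$ then has stabilizer in $\{H,D,J,K,K'\}$, with $G$ excluded by the absence of fixed points.

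Then I would run the shortest-path argument in $\Delta^H$. The path should start at a vertex with stabilizer $J$ and end at a vertex with stabilizer $K$ or $K'$; in the $K'$ case, translate the endpoint by $n^{-1}$, which preserves $\Delta^H$. The goal is that all interior vertices and all edges of a shortest such path have stabilizer exactly $H$. Then the argument of Theorem~\ref{uniqueness-theorem} can be repeated verbatim: interior vertices lie in distinct orbits, with repeated orbits shortened using $g\in N_G(H)$. The $G$-span of the path is then a subcomplex $G$-homotopy equivalent to $\Gamma$, and being a synthetic building it equals $\Delta$ by minimality.

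The main obstacle is interior vertices, or edges, whose stabilizer is $D$, since $D$ is not maximal. My first attempt would use $\Delta^D$, which is contractible and contains $\Delta^J$. Any vertex with stabilizer $D$ can be joined inside $\Delta^D$ to a $J$-vertex through vertices with stabilizer $D$ or $J$, because $D$ lies only in $J$ among the candidates, not in $K$ or $K'$. So I would choose the path to minimize first the number of $K$-type endpoints encountered, and then the length, with the $D$-vertices at the start of the path absorbed into the $J$-end. If that fails, the fallback is to show directly that a graph with orbit diagram $D$--$H$--$K$, or one containing $D$-edges, violates the fixed point condition for the normal four-group $V\triangleleft J$. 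Its fixed set would then be disconnected, unless a $J$-vertex is present and the $D$-part retracts onto it equivariantly.
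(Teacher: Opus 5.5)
Your proposal is correct and is essentially the paper's argument. You take a shortest path in $\Delta^H$ from an $S_4$-vertex to a $C_2\times S_3$-vertex, and you handle a $D_8$-stabilized vertex exactly as the paper does: take the last one on the path, use contractibility of $\Delta^{D_8}$ to reroute through a $D_8$-segment attached at the $S_4$-end, and the $G$-span then collapses equivariantly onto $\Gamma\simeq_G\Delta(\mathcal{S}_2(S_5))$ and equals $\Delta$ by minimality.

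Your fallback is not needed, and you are slightly more careful than the paper in allowing the endpoint stabilizer to be $K$ or $K'$, which the shortening by elements of $N_G(H)=D_8$ requires. One small correction: the absence of $G$-fixed points follows directly from minimality and non-triviality, since a $G$-fixed vertex is itself a synthetic building, not from Theorem~\ref{p-group-theorem}.
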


The non-trivial minimal synthetic building for $S_5$ is $G$-homotopy equivalent to $\Delta(\calS_2(G))$, which was calculated in \cite{Web1987-1} and has orbit diagram 
\begin{center}
\begin{tikzpicture}
    \fill (0,0) circle (0.1)
        node[left=1pt] {$S_4$};
    \fill (2,0) circle (0.1)
        node[right=1pt] {$C_2\times S_3$};
    \draw (0,0)--(2,0)
        node[below=1pt,pos=0.5] {$H$};
\end{tikzpicture}
\end{center}
where $H=\langle (1,2),(3,4)\rangle$.
\begin{proof}
    Let $\Delta$ be a non-trivial minimal synthetic building for $S_5$ at $p=2$, so that $S_5$ is not the stabilizer of any simplex of $\Delta$. The subgroups $S_4=\langle (1,2,3,4),(1,2)\rangle$ and $C_2\times S_3=\langle(1,2),(3,4),(3,4,5)\rangle$ have non-identity normal 2-subgroups, so they have contractible (and hence non-empty) fixed points on $\Delta$, as does their intersection $H=\langle (1,2),(3,4)\rangle$. Because $S_4$ and $C_2\times S_3$ are maximal in $S_5$ it follows that there are two vertices in $\Delta$, one with stabilizer equal to $S_4$, the other with stabilizer equal to $C_2\times S_3$. Because $\Delta^H\simeq\bullet$ there is a path of vertices and edges in $\Delta^H$ joining them. Choose a shortest such path. Excluding the end vertex stabilizers, the other stabilizers must be  $H$ or conjugates of $D_8$, because these account for all the subgroups containing $H$. If there is no $D_8$ stabilizer then the orbits of simplices in this path form a subcomplex equivariantly homotopy equivalent to $\Delta(\calS_2(G))$, and by minimality of $\Delta$ this subcomplex is the whole of $\Delta$. If there is a vertex $x$ with stabilizer of type $D_8$ on this path then because $\Delta^{D_8}\simeq \bullet$ there is a path with $D_8$ stabilizers from $x$ to the vertex with stabilizer $S_4$. From this we can construct a path with stabilizers that start at $S_4$, then are $D_8$ for some edges and vertices, then are $H$ for the remaining edges and vertices until the end vertex with stabilizer $C_2\times S_3$. Again, the orbits of simplices on this path form a subcomplex equivariantly homotopy equivalent to $\Delta(\calS_2(G))$, and by minimality of $\Delta$ this is the whole of $\Delta$.  
\end{proof}


\subsection{Families of infinitely many synthetic buildings}
\label{infinite-family-subsection}

We can obtain an infinite family of slender synthetic buildings of distinct homotopy types of bounded dimension for any finite group whose subgroups meet certain conditions. This is surprising in view of the difficulty, in general, of finding small exotic synthetic buildings. Once again, strongly $p$-embedded subgroups play a crucial role. 
In the next result we will use the notation
$$N_G(H,K)=\{x\in G\bigm| {}^xH\subseteq K\}$$
for the \textit{transporter} of the subgroup $H$ into the subgroup $K$.

\begin{theorem}\label{thm:infiniteGgraphs}
    Let $G$ be a finite group that contains a strongly $p$-embedded subgroup $S$. Furthermore, assume that there exists a subgroup $T$ such that for all elements $g\in S$ of order $p$ we have $N_G(\langle g\rangle,T)\subseteq TS$ with $p \bigm| |S\cap T|$ and $T\not\subseteq S$. Then there exists an infinite family of $1$-dimensional slender synthetic buildings for $G$ at the prime $p$, all of distinct homotopy types, with orbit diagrams as shown.

\begin{figure}[H]
\label{fig:pinwheelG}
    \begin{center}
    \begin{tikzpicture}[scale=0.8]
    \fill (3.46,2) circle (0.1)
        node[right=1pt] {$T$};
    \fill (0,0) circle (0.1)
        node[left=1pt] {$S$};
    \draw (0,0)--(3.46,2)
        node[above=6pt,pos=0.4] {$S\cap T$};
    \fill (3.46,-2) circle (0.1)
        node[right=1pt] {$T$};
    \draw (0,0)--(3.46,-2)
        node[below=7pt,pos=0.4] {$S\cap T$};
    \node[scale=1.5] at (2.5,0.2) {$\vdots$};
    \draw [decorate,decoration={brace,amplitude=10pt},rotate=270] (-2,4.2) -- (2,4.2);
    \node at (5.8,0) {$m$-times};
    \end{tikzpicture}
    \end{center}
    \end{figure}
\end{theorem}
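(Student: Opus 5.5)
The plan is to construct, for each $m\ge 1$, a $G$-graph $\Gamma_m$ with the star-shaped orbit diagram displayed. Then I check the two conditions of Definition~\ref{synthetic-building-definition} together with slenderness, and finally separate the $\Gamma_m$ by their Euler characteristics.

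\emph{Construction.} Let $\Gamma_m$ have vertex set $G/S\sqcup (G/T\times\{1,\dots,m\})$ and edge set $G/(S\cap T)\times\{1,\dots,m\}$. The edge $(x(S\cap T),i)$ joins $xS$ to $(xT,i)$. Both endpoints are well defined.

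Two edges $(x(S\cap T),i)$ and $(y(S\cap T),i)$ with the same endpoints satisfy $x^{-1}y\in S\cap T$, so they coincide. Hence $\Gamma_m$ is a genuine simplicial graph, and it is the unique graph with this orbit diagram (Theorem~\ref{thm:uniqueGgraphs}). Because $\Gamma_m$ is bipartite, with $G$ preserving the two classes of vertices, edge stabilizers fix edges pointwise.

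All stabilizers are conjugates of $S$, $T$ or $S\cap T$, and each of these has order divisible by $p$ because $p\mid |S\cap T|$. So condition~(2) holds. The orbit space is the star tree, hence contractible, so $\Gamma_m$ is slender.

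\emph{Fixed points.} By Lemma~\ref{lem:contractible_pelems} it suffices to show $\Gamma_m^{\langle g\rangle}\simeq\bullet$ for every element $g$ of order $p$. Since $\Gamma_m^{\langle {}^xg\rangle}=x\Gamma_m^{\langle g\rangle}$, we may conjugate $g$ freely.

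First I use the standard fact that a strongly $p$-embedded subgroup $S$ contains a Sylow $p$-subgroup of $G$. To see this, let $P\in{\rm Syl}_p(S)$ and $P\le Q\in {\rm Syl}_p(G)$. If $P<Q$, then $N_Q(P)>P$. An element $x\in N_Q(P)\setminus S$ would give $P\le S\cap{}^xS$, which is impossible. On the other hand, $N_Q(P)\le S$ contradicts the maximality of $P$ in $S$. Hence $P=Q$, and after conjugating we may assume $g\in S$.

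Now I compute the fixed simplices of $g$.
\begin{itemize}
\item Vertices in $G/S$: $g$ fixes $xS$ iff $g\in S\cap{}^xS$, which forces $x\in S$. So the only fixed vertex of this type is $S$.
\item Vertices $(xT,i)$: $g$ fixes such a vertex iff ${}^{x^{-1}}\langle g\rangle\le T$, i.e. $x^{-1}\in N_G(\langle g\rangle,T)\subseteq TS$. Thus $x\in ST$, and every fixed vertex of this type has the form $(sT,i)$ with $s\in S$.
\item Edges: for such a fixed vertex $(sT,i)$, we have $s^{-1}gs\in S\cap T$, so the edge $(s(S\cap T),i)$ is fixed and joins $(sT,i)$ to $S$.
\end{itemize}
Every fixed edge has both endpoints fixed, hence has the form $(s'(S\cap T),i)$ with $s'\in S$ and the same endpoint $S$. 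Two such edges landing on $(sT,i)$ satisfy $s'\in sT\cap S=s(S\cap T)$, so they are equal.

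Therefore $\Gamma_m^{\langle g\rangle}$ is a star centred at the vertex $S$. In particular it is nonempty and contractible, which gives condition~(1) via Lemma~\ref{lem:contractible_pelems}.

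\emph{Distinct homotopy types.} Counting vertices and edges,
$$\chi(\Gamma_m)=|G:S|-m\bigl(|G:S\cap T|-|G:T|\bigr).$$
The bracket is positive because $T\not\subseteq S$ makes $S\cap T$ a proper subgroup of $T$. Hence the Euler characteristics are pairwise distinct, and the $\Gamma_m$ are not even non-equivariantly homotopy equivalent.

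The step I expect to be the main obstacle is the fixed-point analysis: using the transporter hypothesis to force every $g$-fixed $T$-vertex into the $S$-orbit of the base vertex $T$, and checking that each such vertex has exactly one fixed edge to $S$. The Sylow-containment fact is the only ingredient not stated earlier in the paper, and it is included in that step.
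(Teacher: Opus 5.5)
Your proof is correct and follows the paper's argument essentially step for step. Like the paper, you reduce to elements of order $p$ via Lemma~\ref{lem:contractible_pelems}, conjugate $g$ into $S$, use the transporter hypothesis to show the fixed subgraph is a star centred at the vertex $S$, and separate the graphs by Euler characteristic. Beyond the paper, you prove that a strongly $p$-embedded subgroup contains a Sylow $p$-subgroup, which the paper only asserts, and you check directly that the coset construction is a simplicial graph, where the paper cites Theorem~\ref{thm:uniqueGgraphs}.
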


\begin{proof}
There is a unique such graph $\Gamma$ with this orbit diagram by Theorem~\ref{thm:uniqueGgraphs}. We may identify the vertices of $\Gamma$ with the $G$-sets $G/S$ and copies of $G/T$, and the edges with copies of the $G$-set $G/(S\cap T)$. Because $p$ divides $|S\cap T|$, all stabilizers of vertices and edges in $\Gamma$ have order divisible by $p$. 

By construction, $G\bs\Gamma$ is contractible. To verify that $\Gamma$ is a synthetic building it remains to verify the condition from Proposition~\ref{equiv-condition-proposition}: $\Delta^H$ is ordinarily contractible for all subgroups $H\le G$ with $O_p(H)\ne 1$. For this, it suffices by Lemma~\ref{lem:contractible_pelems} to show that $\Gamma^{\langle g\rangle}$ is contractible for all elements $g\in G$ of order $p$. 
    
Let $g\in G$ be an element of order $p$.  Because $S$ is strongly $p$-embedded in $G$, there is a single vertex in the orbit of type $G/S$ fixed by $g$. We may assume, without loss of generality, that $g\in S$ so that this fixed vertex is the coset $S$. This is because every element of $G$ of order $p$ is conjugate to an element of $S$, by the strongly $p$-embedded condition, and conjugate elements have homeomorphic fixed points.

If $m=0$, meaning there are no orbits of vertices of the form $G/T$, then $\Gamma^{\langle g\rangle}$ is the single vertex corresponding to the coset $S$ and is hence contractible. Now suppose $m\geq1$. Consider an orbit of vertices of the form $G/T$ and its incident orbit of edges $G/(S\cap T)$. 
The fixed points of $g$ on $G/T$ are $\{xT\bigm| g\in {}^xT\}$ so such $x$ with $xT$ fixed have $x^{-1}\in N_G(\langle g\rangle,T) \subseteq TS$. Thus $x\in ST$ and $x=st$ for some $s\in S$ and $t\in T$, so the fixed coset $xT = stT = sT$. We deduce that $xT$ is an end vertex of the edge $s(S\cap T)$, whose other end vertex is $sS = S$. We deduce that each vertex in $\Gamma^{\langle g\rangle}$ is either the single coset $S$, or a coset $xT$ that is joined by a unique edge to $S$. If $r$ is the number of vertices in $G/T$ fixed by $g$, then $\Gamma^{\langle g\rangle}$ consists of a single vertex with stabilizer $S$ incident with $r\cdot m$ vertices of type $T$ via $r\cdot m$ edges of type $S\cap T$, as shown. 
    \begin{center}
    \begin{tikzpicture}[scale=0.8]
    \fill (4,3) circle (0.1);
    \fill (0,0) circle (0.1);
    \draw (0,0)--(4,3);
    \fill (4,-3) circle (0.1);
    \draw (0,0)--(4,-3);
    \fill (4,1.5) circle (0.1);
    \draw (0,0)--(4,1.5);
    \fill (4,-1.5) circle (0.1);
    \draw (0,0)--(4,-1.5);
    \node[rotate=30] at (3,1.75) {$\vdots$};
    \node[rotate=330] at (3,-1.5) {$\vdots$};
    \node[scale=1.5] at (2.5,0.2) {$\vdots$};
    \draw [decorate,decoration={brace,amplitude=10pt},rotate=270] (-3,6.5) -- (3,6.5);
    \node at (8,0) {$m$-times};
    \draw [decorate,decoration={brace,amplitude=10pt},rotate=270] (1.5,4.2) -- (3,4.2);
    \node at (5.6,2.25) {$r$-times};
    \draw [decorate,decoration={brace,amplitude=10pt},rotate=270] (-3,4.2) -- (-1.5,4.2);
    \node at (5.6,-2.25) {$r$-times};
    \end{tikzpicture}
    \end{center}
    \noindent This fixed point space is contractible, completing the argumnent that $\Gamma$ is a slender synthetic building for $G$ at $p$.

    Finally we show that these graphs have different homotopy types for different choices of $m$. The Euler characteristic of $\Gamma$ is
    $$\chi(\Gamma)=|G/S|+m\cdot|G/T|-m\cdot|G/(S\cap T)|$$
    Note that $|G/T|<|G/(S\cap T)|$ since $T\ne (S\cap T)$, so the value of $\chi(\Gamma)$ decreases as $m$ increases. By ranging over all $m\in\NN$, we obtain infinitely many distinct homotopy types for $\Gamma$. 
    \end{proof}

We have just shown how to construct an infinite family of slender synthetic buildings of distinct homotopy types for a fixed group $G$ that is not obtained via taking suspensions, because they all have dimension 1.
It sometimes happens that there exist multiple non-conjugate subgroups $T\leq G$ with the properties in the statement of Theorem~\ref{thm:infiniteGgraphs}, and then we can achieve more distinct homotopy types of this kind. This is exemplified for $A_5$ at $p=2$.

\begin{example}
\label{a5-families-example}
    Let $G=A_5$ and $p=2$, and take $S=A_4$, which is strongly 2-embedded in $G$. We can choose that either $T=S_3$ or $T=D_{10}$ in such a way that $S\cap T=C_2$. We observe that if $g\in S$ is an element of order 2 then $N_G(\langle g \rangle, T) = TN_G(\langle g\rangle)\subseteq TS$ because all elements of order 2 in $T$ are conjugate in $T$ and normalizers (= centralizers) in $G$ of subgroups of $S$ of order 2 are contained in $S$.
    Thus Theorem~\ref{thm:infiniteGgraphs} applies and we obtain infinite families of exotic synthetic buildings for $A_5$ using either choice of $T$. 

    Because there are two choices for the subgroup $T$ up to conjugacy class, we can construct infinitely many $A_5$-graphs of distinct homotopy types in the following way. Take a single orbit of vertices with stabilizers of type $A_4$. To this orbit of vertices, adjoin $m\in\NN$ orbits of vertices with stabilizers of type $S_3$ each via a single orbit of edges with stabilizers of type $C_2$, where $C_2=A_4\cap S_3$. By Theorem~\ref{thm:uniqueGgraphs}, this construction is unique up to equivariant homotopy equivalence. In the same manner, adjoin $n\in\NN$ orbits of vertices with stabilizers of type $D_{10}$ each via a single orbit of edges with stabilizers of type $C_2$, where $C_2=A_4\cap D_{10}$. The resulting orbit diagram for $\Gamma$ is shown. 
    
    \begin{center}
    \begin{tikzpicture}[scale=0.8]
    \fill (3.46,2) circle (0.1)
        node[right=1pt] {$S_3$};
    \fill (0,0) circle (0.1)
        node[below=3pt] {$A_4$};
    \fill (-3.46,2) circle (0.1)
        node[left=1pt] {$D_{10}$};
    \draw (0,0)--(3.46,2)
        node[above=4pt,pos=0.4] {$C_2$};
    \draw (0,0)--(-3.46,2)
        node[above=3pt,pos=0.4] {$C_2$};
    \fill (3.46,-2) circle (0.1)
        node[right=1pt] {$S_3$};
    \fill (-3.46,-2) circle (0.1)
        node[left=1pt] {$D_{10}$};
    \draw (0,0)--(3.46,-2)
        node[below=3pt,pos=0.4] {$C_2$};
    \draw (0,0)--(-3.46,-2)
        node[below=4pt,pos=0.4] {$C_2$};
    \node[scale=1.5] at (2.5,0.2) {$\vdots$};
    \node[scale=1.5] at (-2.5,0.2) {$\vdots$};
    \draw [decorate,decoration={brace,amplitude=10pt},rotate=90] (-2,4.65) -- (2,4.65);
    \node at (-6.1,0) {$n$-times};
    \draw [decorate,decoration={brace,amplitude=10pt},rotate=270] (-2,4.4) -- (2,4.4);
    \node at (5.9,0) {$m$-times};
    \end{tikzpicture}
    \end{center}
    Euler characteristic of $\Gamma$ is
    $$\chi(\Gamma)=5+m\cdot(10-30)+n\cdot(6-30)=5-20m-24n.$$
    Ranging $m$ and $n$ over $\ZZ_{\geq0}$ yields infinitely many possible values for $\chi(\Gamma)$, and hence infinitely many distinct homotopy types for $\Gamma$. When $m=1$ and $n=0$ the graph $\Gamma$ is shown in Example~\ref{Delta13-example}.
    
    We note also that the reduced Lefschetz modules for these synthetic buildings change, along with the Euler characteristics. For example, the two synthetic buildings at $p=2$ with orbit diagrams
    \begin{center}
\begin{tikzpicture}
    \fill (-3.5,0) circle (0.1)
        node[left=1pt] {$A_4$};
\node at (-2,0) {and};
    \fill (0,0) circle (0.1)
        node[left=1pt] {$A_4$};
    \fill (2,0) circle (0.1)
        node[right=1pt] {$S_3$};
    \draw (0,0)--(2,0)
        node[below=1pt,pos=0.5] {$C_2$};
\end{tikzpicture}
\end{center}
have reduced Lefschetz modules $P_4$ and $-(P_{2a}+P_{2b})$ over a splitting field of characteristic 2, where $P_4$ is the 4-dimensional irreducible projective module and $P_{2a},P_{2b}$ are the projective covers of the two 2-dimensional irreducible modules.

\end{example}
\begin{example}
We can find many more examples of groups $G$ with subgroups $S$ and $T$ satisfying the conditions of Theorem~\ref{thm:infiniteGgraphs} in the following way. Take primes $p,q$ with $q=2p+1$ (such as $(2,5)$ or $(3,7)$ or $(5,11)$) and let $G=C_q\rtimes C_{2p}$. Take $S=C_{2p}$ (which is strongly $p$-embedded in $G$) and $T=C_q\rtimes C_p$. We immediately see that the conditions of Theorem~\ref{thm:infiniteGgraphs} are satisfied, noting that $TS=G$ in this situation. The graphs $\Gamma$ on which $G$ acts are complete bipartite graphs connecting $q$ vertices to $2m$ vertices, where there are $m$ orbits of vertices of type $T$.
\end{example}

A feature of this construction is that the entire group $G$ does not appear as a simplex stabilizer, leading to non-trivial equations in group cohomology of the kind shown in Theorem~\ref{thm:WebbA}. If we consider slender synthetic buildings where $G$ does appear, we can achieve another infinite family with fewer conditions on the subgroups of $G$ in the following way.
\begin{theorem}
Let $G$ be a finite group that contains a strongly $p$-embedded subgroup. Then there is an infinite family of 1-dimensional slender synthetic buildings for $G$ at the prime $p$, all of distinct homotopy types.
\end{theorem}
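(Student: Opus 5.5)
Let $S$ be a strongly $p$-embedded subgroup of $G$. I will build, for each $m\ge 0$, a $G$-graph $\Gamma_m$ with a tree as orbit diagram: a central vertex with stabilizer $G$, joined to $m+1$ vertices each with stabilizer $S$, every edge also having stabilizer $S$. Theorem~\ref{thm:uniqueGgraphs} gives a unique such $G$-graph. Concretely, $\Gamma_m$ has one $G$-fixed vertex $v$, and $m+1$ orbits of vertices each isomorphic to $G/S$. Each vertex $xS$ is joined to $v$ by the edge $x(S)$. Thus $\Gamma_m$ is a star: a cone with apex $v$ over the discrete set $(m+1)\cdot G/S$.

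First I check that $\Gamma_m$ is a synthetic building. Every stabilizer is $G$ or a conjugate of $S$, and $p$ divides $|S|$, so condition (2) holds. For condition (1), Lemma~\ref{lem:contractible_pelems} shows it suffices to verify that $\Gamma_m^{\langle g\rangle}$ is contractible for each $g$ of order $p$. This fixed set contains $v$, together with those vertices $xS$ satisfying $g\in{}^xS$ and their edges to $v$. It is therefore a star centred at $v$, hence a tree, hence contractible. This step does not even need the strong embedding. Alternatively, the whole graph is $G$-equivariantly a cone on a fixed point. Slenderness is also immediate: $G\bs\Gamma_m$ is the orbit diagram itself, a star with $m+1$ edges, which is a tree.

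Here lies the main obstacle. Every $\Gamma_m$ is $G$-contractible, since it is a cone with a $G$-fixed apex. So the members of the family are equivariantly homotopy equivalent to one another, and even ordinary homotopy type cannot separate them. The naive construction therefore fails, and the strongly $p$-embedded subgroup must be used more seriously. I would instead attach the $G$-fixed vertex to the non-trivial $0$-dimensional building $G/S$ of Theorem~\ref{SPES-theorem}(2), using it in a way that creates cycles.

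The refined plan is as follows. Take vertex orbits $\{v\}$ (stabilizer $G$) and $G/S$, and join them with $m$ orbits of edges, the $i$th being $G/S_i$ for subgroups $S_i\le S$ with $p\mid|S_i|$. An edge $xS_i$ joins $v$ to $xS$. The orbit graph is then $m$ parallel edges, which is not a tree, so slenderness fails for $m\ge2$. I therefore subdivide each parallel edge by inserting an intermediate vertex orbit $G/T_i$, where $S_i\le T_i$ and $T_i\not\le S$. The orbit diagram becomes $m$ paths from $v$ to $S$, and the orbit graph is still not a tree.

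The final fix keeps the orbit diagram a tree: use $m$ vertex orbits $G/S$, each joined to $v$ by edges with stabilizer $S$, and also joined to one another along a path through orbits of type $G/(S\cap{}^xS)$. By strong $p$-embedding these last stabilizers are $p'$-groups, which conflicts with condition (2). This tension between slenderness, condition (2) and equivariant non-contractibility is the step I expect to be hardest. I would resolve it by choosing edge stabilizers $S_i<S$ with $p\mid|S_i|$, joining $v$ to the $G/S$ orbit, and checking fixed points through the unique $S$-coset fixed by $g$. I would then distinguish the resulting complexes by their Euler characteristics $1+|G/S|-\sum|G/S_i|$, as in Theorem~\ref{thm:infiniteGgraphs}.
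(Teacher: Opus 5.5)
You correctly check that the star $\Gamma_m$ satisfies the axioms, and you correctly see that a $G$-fixed apex with the $G/S$ orbits as leaves makes every member $G$-contractible. The gap is that the proposal never replaces this with a construction that works. Your final ``resolution'' is the configuration you had already rejected: one vertex $v$ fixed by $G$, one vertex orbit $G/S$, and $m$ edge orbits $G/S_i$ with $S_i<S$.

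Its orbit graph consists of $m$ parallel edges, so it is not slender for $m\ge 2$. Condition (1) fails too. An element $g$ of order $p$ fixes $v$ and exactly one coset $xS$, and every edge it fixes joins these two vertices. So $\Gamma^{\langle g\rangle}$ is contractible only if $g$ fixes exactly one edge in total. Now each edge $1\cdot S_i$ is fixed by every element of order $p$ in $S_i$, and such elements exist because $p\mid |S_i|$. Requiring one fixed edge per $g$ therefore forces the sets of elements of order $p$ in the various $S_i$ to be nonempty and pairwise disjoint. This bounds $m$ by the number of elements of order $p$ in $S$. No infinite family can come out of this, whatever Euler characteristics you compute.

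The missing idea is to make each $G/S$ piece a bridge between two $G$-fixed vertices rather than a pendant. The paper takes an orbit diagram that is a tree with $n$ vertices of stabilizer $G$ and $n-1$ edges of stabilizer $S$. Each edge orbit then gives $|G:S|$ parallel edges between two fixed vertices. To stay simplicial you can subdivide, giving the orbit diagram $G - S - G - S - \cdots - G$ with all edge stabilizers $S$. In terms of your star, this means attaching each $G/S$ orbit to a second fixed vertex.

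With this construction, each element $g$ of order $p$ fixes every $G$-fixed vertex and exactly one member of each $G/S$ orbit. It fixes at least one because $S$ contains a Sylow $p$-subgroup, and at most one by strong $p$-embedding. This is exactly where the hypothesis is used; your star never needed it. So $\Gamma_n^{\langle g\rangle}$ is a copy of the orbit tree, and Lemma~\ref{lem:contractible_pelems} gives condition (1). Condition (2) and slenderness are immediate. Finally, $\chi(\Gamma_n)=n-(n-1)|G:S|$ strictly decreases in $n$ because $S$ is proper, so the homotopy types are all distinct.
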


\begin{proof}
Define $\Gamma_n$ to be a $G$-graph whose orbit diagram is a tree with $n$ orbits of vertices with stabilizer $G$ itself and $n-1$ orbits of edges whose stabilizers are all the strongly $p$-embedded subgroup $S$. In particular, if $n=1$ then the orbit diagram is a single orbit of vertices. By construction, $G\bs\Gamma_n$ is contractible. Furthermore, every stabilizer subgroup has order divisible by $p$. Since $S$ is strongly $p$-embedded, every orbit of edges contains a single fixed edge under the action of a subgroup $H\leq G$ with $O_p(H)\ne1$. Clearly every vertex is fixed by the action of every subgroup of $G$, so the fixed point space $\Gamma_n^H$ is isomorphic to $G\bs\Gamma_n$ and hence contractible. Thus, $\Gamma_n$ is a slender synthetic building for $G$ at $p$ for all $n\geq1$. The homotopy type of $\Gamma_n$ is distinct for all $n$ since an increase in $n$ by 1 results in $|G|/|S|-1$ more circuits.
\end{proof}


\section{Constructions for synthetic buildings}
\label{constructions-section}
Ideally, we would wish to construct as many synthetic buildings as we can and, if possible, classify them in their entirety. With this in mind we consider operations on $G$-simplicial complexes that can produce new synthetic buildings from those that are already known.

\subsection{The functor from weak synthetic buildings to synthetic buildings}

\begin{definition}
    Given a weak synthetic building $\Delta$ for $G$ at the prime $p$, we define $\mathrm{SB}(\Delta)$ to be the subset of the simplices of $\Delta$ consisting of the simplices whose stabilizers have order divisible by $p$.
\end{definition}

\begin{proposition}\label{prop:weak_to_strong}
    Let $\Delta$ be a weak synthetic building for $G$ at the prime $p$. Then $\mathrm{SB}(\Delta)$ is a synthetic building for $G$ at $p$. It is the unique maximal subcomplex of $\Delta$ with this property.
\end{proposition}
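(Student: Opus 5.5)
First I check that $\SB(\Delta)$ is a subcomplex. If $\tau$ is a face of $\sigma$, then $G_\sigma$ fixes $\sigma$ pointwise by our standing convention on $G$-simplicial complexes. So $G_\sigma$ fixes every vertex of $\tau$, hence stabilizes $\tau$, and $G_\sigma\le G_\tau$. Thus if $p$ divides $|G_\sigma|$, it divides $|G_\tau|$. The set $\SB(\Delta)$ is also $G$-invariant, since $G_{g\sigma}={}^gG_\sigma$. So $\SB(\Delta)$ is a $G$-subcomplex, and condition (2) of Definition~\ref{synthetic-building-definition} holds for it by construction.

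For condition (1) I use the formulation ($1'$) of Proposition~\ref{equiv-condition-proposition}. Let $H\le G$ with $O_p(H)\ne 1$. I claim $\SB(\Delta)^H=\Delta^H$. The inclusion $\subseteq$ is clear. Conversely, take a simplex $\sigma$ of $\Delta^H$. The subcomplex $\Delta^H$ consists of the simplices fixed pointwise by $H$, which by the stabilizer convention are those with $H\le G_\sigma$. Since $O_p(H)\ne1$, the order $|H|$ is divisible by $p$, so $|G_\sigma|$ is too, and $\sigma\in\SB(\Delta)$. Hence $\SB(\Delta)^H=\Delta^H$, which is contractible because $\Delta$ is a weak synthetic building. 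This is the same argument as the identity $\Delta_\calY^H=\Delta^H$ in the proof of Corollary~\ref{orbit-space-mod-p-acyclic-corollary}.

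For maximality, let $\Theta\subseteq\Delta$ be any subcomplex that is a synthetic building. Its simplex stabilizers are computed in $G$ and agree with those in $\Delta$. Condition (2) for $\Theta$ says each simplex of $\Theta$ has stabilizer of order divisible by $p$, so $\Theta\subseteq\SB(\Delta)$. Since $\SB(\Delta)$ is itself such a subcomplex, it is the unique maximal one. I read ``subcomplex'' here as an invariant subcomplex, as in the definition of minimality. If non-invariant subcomplexes were intended, the same argument still applies once stabilizers are interpreted in $G$.

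I do not expect a real obstacle. The only point needing care is the first step: closure under faces genuinely uses the hypothesis that setwise stabilizers fix simplices pointwise. Without it, $G_\sigma\le G_\tau$ could fail.
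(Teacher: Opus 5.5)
Your proof is correct and follows essentially the same route as the paper: $\mathrm{SB}(\Delta)$ has the same fixed points as $\Delta$ under any subgroup whose order is divisible by $p$, and condition (2) and maximality are then immediate from the definition. The differences are minor. The paper applies this identity directly to condition (1), with $\Delta^P=\mathrm{SB}(\Delta)^P$ as $N_G(P)$-spaces, so it needs no appeal to Proposition~\ref{equiv-condition-proposition}; you additionally verify closure under faces and $G$-invariance, which the paper takes for granted.
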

\begin{proof}
    Since $\Delta$ is a weak synthetic building, we have that $\Delta^P$ is $N_G(P)$-contractible for all non-identity $p$-subgroups $P$ of $G$, and it equals $\mathrm{SB}(\Delta)^P$, so that $\mathrm{SB}(\Delta)^P$ is $N_G(P)$-contractible as well. All simplices in $\mathrm{SB}(\Delta)$ have stabilizers of order divisible by $p$ by construction. Evidently $\mathrm{SB}(\Delta)$ is the unique maximal subcomplex that is a synthetic building for $G$ at $p$.
\end{proof}

\begin{corollary}\label{cor:weakSlend_to_Slend}
    If $\Delta$ is a graph that is a slender weak synthetic building for $G$ at the prime $p$, then $\mathrm{SB}(\Delta)$ is slender also.
\end{corollary}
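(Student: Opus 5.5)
The plan is to combine Proposition~\ref{prop:weak_to_strong}, which makes $\SB(\Delta)$ a synthetic building, with Corollary~\ref{cor:modp}, which makes its orbit space mod $p$ acyclic. The point to be added is that the orbit space of a graph is itself a graph. This is the same strategy used at the end of the proofs of Corollaries~\ref{cor:sbs_1b_sn} and~\ref{cor:sbs_1b_an}.

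\emph{Step 1: reduce to a homology statement.} Since $\Delta$ is a graph, its subcomplex $\SB(\Delta)$ is a graph of dimension at most $1$. Because $G$ acts simplicially and each simplex stabilizer fixes the simplex pointwise, the orbit space $G\bs\SB(\Delta)$ is again a $1$-dimensional CW complex: its vertices are the vertex orbits and its edges are the edge orbits, with no edge folded onto itself. A graph is contractible if and only if it is nonempty, connected, and has $H_1=0$.

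\emph{Step 2: use the mod $p$ acyclicity.} By Corollary~\ref{cor:modp}, applied to the synthetic building $\SB(\Delta)$, the orbit graph $G\bs\SB(\Delta)$ has the $\FF_p$-homology of a point. Its $H_0$ over $\FF_p$ being $\FF_p$ gives nonemptiness and connectivity; connectivity also follows from Proposition~\ref{prop:sb_ctd_orbit_space}. The integral $H_1$ of a graph is free abelian, and it is the top homology, so $H_1(-;\FF_p)\cong H_1(-;\ZZ)\otimes\FF_p$. Hence the vanishing of $H_1(-;\FF_p)$ forces the rank of $H_1(-;\ZZ)$ to be $0$. A connected graph with no cycles is a tree, so it is contractible, and therefore $\SB(\Delta)$ is slender.

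\emph{Main obstacle.} The slenderness of $\Delta$ itself is not even needed in this argument. The only points to check are the degenerate cases and the identification of the orbit space as a genuine graph. If $p\nmid|G|$, then $\SB(\Delta)$ is empty. This is the case where $\mathcal S_p(G)=\emptyset$, and then Corollary~\ref{cor:modp} does not supply a point. I would handle it separately, either by noting that the statement is only meaningful when $p$ divides $|G|$ or by excluding this case explicitly. The use of Corollary~\ref{cor:modp} itself requires no further hypotheses beyond Proposition~\ref{prop:weak_to_strong}.
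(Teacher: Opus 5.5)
Your proof is correct, but it takes a different route from the paper's.

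\textbf{The paper's route.} The paper obtains connectivity of $G\bs\SB(\Delta)$ from the elementary Proposition~\ref{prop:sb_ctd_orbit_space}. It then uses the slenderness of $\Delta$ in an essential way: $\SB(\Delta)$ is a $G$-invariant subcomplex, so $G\bs\SB(\Delta)$ is a connected subgraph of the tree $G\bs\Delta$, hence a tree.

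\textbf{Your route.} You invoke Corollary~\ref{cor:modp}, exactly as the paper does at the end of Corollaries~\ref{cor:sbs_1b_sn} and~\ref{cor:sbs_1b_an}. That corollary rests on the much deeper Theorem A of \cite{Web1991}. In exchange you prove something stronger: every synthetic building that is a graph is slender. So the hypothesis that $\Delta$ is slender is superfluous. This amounts to the case $\calY=\{1\}$ of Conjecture~\ref{orbit-space-contractible-conjecture} in dimension at most $1$. The paper's argument, by contrast, stays at the level of the elementary fixed-point and Euler characteristic reasoning, at the cost of needing the slender hypothesis.

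\textbf{Two small remarks.}
\begin{enumerate}
\item The isomorphism $H_1(-;\FF_p)\cong H_1(-;\ZZ)\otimes\FF_p$ for a graph does not follow merely from $H_1$ being the top homology. It follows from the universal coefficient theorem because $H_0(-;\ZZ)$ is free. Alternatively, $\dim H_1(-;\FF_p)=E-V+c$ over any field.
\item Your caveat about $p\nmid|G|$ is well taken. In that case $\SB(\Delta)=\emptyset$, which is not slender, so the statement genuinely requires $p\mid|G|$. The paper's proof makes this assumption tacitly as well.
\end{enumerate}
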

\begin{proof}
    By Proposition~\ref{prop:sb_ctd_orbit_space}, the orbit space $G\bs \mathrm{SB}(\Delta)$ is connected. Furthermore, since $G\bs \mathrm{SB}(\Delta)$ is a connected subgraph of the tree $G\bs\Delta$, necessarily $G\bs \mathrm{SB}(\Delta)$ is also a tree. Hence $\mathrm{SB}(\Delta)$ is slender. 
\end{proof}

\begin{proposition}
\label{SB-is-functorial-proposition}
    The operation $\mathrm{SB}$ defines a functor from the category of weak synthetic buildings with equivariant simplicial maps to the category of synthetic buildings with equivariant simplicial maps.
\end{proposition}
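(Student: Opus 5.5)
We must show two things: that an equivariant simplicial map $f:\Delta_1\to\Delta_2$ between weak synthetic buildings restricts to a map $\SB(\Delta_1)\to\SB(\Delta_2)$, and that this assignment respects identities and composition. By Proposition~\ref{prop:weak_to_strong}, $\SB(\Delta_i)$ is a synthetic building, so the objects are handled. We define $\SB(f)$ to be the restriction $f|_{\SB(\Delta_1)}$, and we only need to check that its image lies in $\SB(\Delta_2)$.

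The key observation is monotonicity of stabilizers under equivariant maps. Let $\sigma$ be a simplex of $\SB(\Delta_1)$, so $p$ divides $|G_\sigma|$. Its image $f(\sigma)$ is a simplex of $\Delta_2$, possibly of lower dimension since simplicial maps may collapse vertices. For $g\in G_\sigma$ we have $g f(\sigma)=f(g\sigma)=f(\sigma)$, so $G_\sigma\le G_{f(\sigma)}$. Hence $p$ divides $|G_{f(\sigma)}|$, and $f(\sigma)\in\SB(\Delta_2)$. The same argument applies to every face of $\sigma$, since the stabilizer of a face of $\sigma$ contains $G_\sigma$ (it fixes $\sigma$ pointwise). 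Therefore $\SB(\Delta_1)$ is indeed a subcomplex, which is implicit in Proposition~\ref{prop:weak_to_strong}, and $f$ maps it simplicially into $\SB(\Delta_2)$. Equivariance of the restriction is inherited from $f$.

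Functoriality is then immediate, because restriction of maps commutes with composition and sends identities to identities: $\SB(f'\circ f)=(f'\circ f)|_{\SB(\Delta_1)}=f'|_{\SB(\Delta_2)}\circ f|_{\SB(\Delta_1)}$, using the previous paragraph to see that the intermediate image lies in $\SB(\Delta_2)$.

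There is no real obstacle. The only point needing care is that a simplicial map may lower dimension, so we argue with the image simplex $f(\sigma)$ rather than assuming dimension is preserved. The stabilizer containment $G_\sigma\le G_{f(\sigma)}$ holds regardless of dimension.
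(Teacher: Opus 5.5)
Your proof is correct and follows the paper's argument: in both, the key point is the containment $G_\sigma\subseteq G_{f(\sigma)}$ for an equivariant simplicial map, which forces $f(\mathrm{SB}(\Delta_1))\subseteq\mathrm{SB}(\Delta_2)$. Your extra checks (closure of $\mathrm{SB}(\Delta_1)$ under faces, and preservation of identities and composition) are routine details the paper leaves implicit.
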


\begin{proof}
    What remains to be shown is that if $f:\Delta_1\to\Delta_2$ is a $G$-simplicial map between weak synthetic buildings then $f(\mathrm{SB}(\Delta_1))\subseteq \mathrm{SB}(\Delta_2)$. This is so because if $\sigma\in\Delta_1$ is a simplex then its stabilizer satisfies $G_\sigma\subseteq G_{f(\sigma)}$ so that if $p\bigm| |G_\sigma|$ then $p\bigm| |G_{f(\sigma)}|$. 
\end{proof}

\subsection{Restriction of synthetic buildings}
\begin{definition}
    If $\Delta$ is a weak synthetic building for $G$ and $H\le G$ we write $\Delta\downarrow_H^G$ for $\Delta$ regarded as an $H$-simplicial complex, and this is a weak synthetic building for $H$. Even if we start with a synthetic building $\Delta$ for $G$ it might not be the case that $\Delta\downarrow_H^G$ is a synthetic building for $H$. We define the  \textit{restriction} of the synthetic building $\Delta$ from $G$ to $H$ to be $\Res_H^G(\Delta)=\SB(\Delta\downarrow_H^G)$, and this is again a synthetic building.
\end{definition}

\begin{proposition}
    If $K\le H\le G$ then $\Res_K^G(\Delta) = \Res_K^H\Res_H^G(\Delta)$.
\end{proposition}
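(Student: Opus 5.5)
Both sides are subcomplexes of the same underlying simplicial complex $\Delta$, so the plan is to show they have exactly the same simplices by unwinding the definitions. By definition, $\Res_K^G(\Delta)=\SB(\Delta\downarrow_K^G)$ is the set of simplices $\sigma\in\Delta$ whose stabilizer in $K$, namely $K_\sigma=K\cap G_\sigma$, has order divisible by $p$. On the other side, $\Res_H^G(\Delta)=\SB(\Delta\downarrow_H^G)$ is the subcomplex of simplices $\sigma$ with $p\bigm| |H_\sigma|$. By Proposition~\ref{prop:weak_to_strong} it is a synthetic building for $H$, so the outer restriction $\Res_K^H$ makes sense. Applying it gives the simplices $\sigma$ of $\Res_H^G(\Delta)$ with $p\bigm| |K_\sigma|$, where $K_\sigma$ is computed for $K$ acting on $\Res_H^G(\Delta)$.

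The first point to check is that stabilizers do not change when we pass to an invariant subcomplex. The stabilizer of $\sigma$ in $K$ is $K\cap G_\sigma$ whether we regard $\sigma$ as a simplex of $\Delta$ or of $\Res_H^G(\Delta)$. So the right-hand side consists of the simplices $\sigma$ with both $p\bigm| |H_\sigma|$ and $p\bigm| |K_\sigma|$.

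The key step is that the first condition is implied by the second. Since $K\le H$, we have $K_\sigma\le H_\sigma$, so $p\bigm| |K_\sigma|$ forces $p\bigm| |H_\sigma|$. Hence every simplex of $\Res_K^G(\Delta)$ already lies in $\Res_H^G(\Delta)$, and the two subcomplexes coincide. They also carry the same $K$-action, since both are restrictions of the $G$-action on $\Delta$.

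I expect no real obstacle. The only thing to be careful about is that $\Res$ is defined through $\SB$ of a restricted complex, so the argument is an equality of subsets of simplices together with compatibility of the actions, not merely an equivariant homotopy equivalence.
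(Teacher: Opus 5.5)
Your proof is correct and is the ``immediate check'' that the paper's one-line proof asserts. Both sides consist of the simplices $\sigma$ of $\Delta$ with $p \mid |K\cap G_\sigma|$, and the intermediate condition $p \mid |H\cap G_\sigma|$ is automatic because $K_\sigma \le H_\sigma$; your unwinding of the definitions just spells out the details the paper leaves to the reader.
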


\begin{proof}
    This is an immediate check.
\end{proof}

\subsection{The join}
\begin{definition}
Let $G_1$ and $G_2$ be groups, let $\Delta_1$ be an abstract $G_1$-simplicial complex and $\Delta_2$ be an abstract $G_2$-simplicial complex, with $\Delta_1,\Delta_2$ non-empty. Their \textit{join}, $\Delta_1*\Delta_2$, is a $G_1\times G_2$-simplicial complex whose vertices are the disjoint union of the vertices of $\Delta_1$ and $\Delta_2$, and whose simplices are the subsets of the form $a$, $b$ and $a\cup b$  where $a\in\Delta_1$ and $b\in\Delta_2$. Evidently, $\Delta_1*\Delta_2$ acquires an action of $G_1\times G_2$ where $(g_1,g_2)a=g_1a$, $(g_1,g_2)b=g_2b$ and $(g_1,g_2)(a\cup b)=g_1a\cup g_2b$. The set-wise stabilizer of a simplex such as $a\cup b$ fixes its vertices pointwise, because the same is true of the actions of $G_1,G_2$ on $\Delta_1,\Delta_2$, and so we obtain a $G_1\times G_2$-simplicial complex.
\end{definition}
In case $G_1=G_2=G$ we get an action of $G$ on $\Delta_1*\Delta_2$ via the diagonal embedding of $G$ in $G\times G$.
\begin{theorem}
    Let $\Delta_1$ be a $G_1$-simplicial complex and $\Delta_2$ a $G_2$-simplicial complex, for groups $G_1$ and $G_2$ respectively. Suppose that $\Delta_1,\Delta_2$ are non-empty.
    \begin{enumerate}
        \item If $\Delta_1$ and $\Delta_2$ are weak synthetic buildings, then $\Delta_1*\Delta_2$ is a weak synthetic building for $G_1\times G_2$.
        \item If $\Delta_1$ and $\Delta_2$ are synthetic buildings, then $\Delta_1*\Delta_2$ is a synthetic building for $G_1\times G_2$. 
    
    \end{enumerate}
    
\end{theorem}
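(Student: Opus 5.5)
The plan is to verify condition ($1'$) of Proposition~\ref{equiv-condition-proposition} by identifying the fixed point spaces of a join with joins of fixed point spaces, and then to read off the simplex stabilizers directly. Write $\pi_1,\pi_2$ for the projections of $G_1\times G_2$ onto its two factors.

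First I would establish the following identity for every subgroup $H\le G_1\times G_2$:
$$(\Delta_1*\Delta_2)^H=\Delta_1^{\pi_1(H)}*\Delta_2^{\pi_2(H)},$$
with the convention that $X*\emptyset=X$. For this, $(g_1,g_2)$ sends the simplex $a\cup b$ to $g_1a\cup g_2b$. Since simplices of $\Delta_1$ and of $\Delta_2$ have disjoint vertex sets, $a\cup b$ is fixed by $H$ if and only if $a$ is fixed by $\pi_1(H)$ and $b$ is fixed by $\pi_2(H)$. The same argument applies to simplices of the form $a$ alone or $b$ alone. Setwise fixing is the same as pointwise fixing here, because each setwise stabilizer fixes its simplex pointwise. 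Hence the fixed subcomplex is exactly this join of subcomplexes.

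Next, let $H$ satisfy $O_p(H)\ne 1$. Then $\pi_i(O_p(H))$ is a normal $p$-subgroup of $\pi_i(H)$, so $\pi_i(O_p(H))\le O_p(\pi_i(H))$. Moreover the two projections cannot both be trivial on $O_p(H)\ne1$, so for some $i$, say $i=1$, we have $O_p(\pi_1(H))\ne 1$. By condition ($1'$) applied to $\Delta_1$ as a weak synthetic building for $G_1$, the space $\Delta_1^{\pi_1(H)}$ is contractible. The other factor $\Delta_2^{\pi_2(H)}$ may be arbitrary, possibly empty. The join of a contractible complex $X$ with any complex $Y$ is contractible: if $Y=\emptyset$ the join is $X$, and otherwise $X*Y$ is homotopy equivalent to $\Sigma(X\wedge Y)$, or more elementarily one contracts $X$ to a point inside $X*Y$ to obtain the cone on $Y$. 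Hence $(\Delta_1*\Delta_2)^H\simeq\bullet$, and Proposition~\ref{equiv-condition-proposition} gives part (1). This reduction to a projection having nontrivial $O_p$ is the only step with real content. The care needed is exactly in allowing one factor of the join to be empty, which is why ($1'$) is more convenient here than condition (1) with normalizers.

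For part (2), the stabilizer of a simplex $a$ of $\Delta_1$ is $(G_1)_a\times G_2$. The stabilizer of a simplex $b$ of $\Delta_2$ is $G_1\times (G_2)_b$. The stabilizer of a simplex $a\cup b$ is $(G_1)_a\times(G_2)_b$. In each case the group contains a factor of order divisible by $p$, by condition (2) for $\Delta_1$ or for $\Delta_2$. Combined with part (1), this shows $\Delta_1*\Delta_2$ is a synthetic building for $G_1\times G_2$.
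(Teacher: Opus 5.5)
Your proposal is correct and takes essentially the same route as the paper. Both identify $(\Delta_1*\Delta_2)^H$ with $\Delta_1^{\pi_1(H)}*\Delta_2^{\pi_2(H)}$, use that $O_p(H)\ne 1$ forces some $O_p(\pi_i(H))\ne 1$ so that one join factor is contractible, and read off the simplex stabilizers for part (2). You also supply two details the paper leaves implicit: the justification $\pi_i(O_p(H))\le O_p(\pi_i(H))$, and the check that an empty second factor causes no trouble.
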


\begin{proof}
    (1) To verify the fixed point condition for a weak synthetic building, let $p_i:G_1\times G_2\to G_i$ where $i=1,2$ be the two projections onto the factor groups. The fixed points of a subgroup $K$ of $G_1\times G_2$ on $\Delta_1*\Delta_2$ are $\Delta_1^{p_1(K)}*\Delta_2^{p_2(K)}$ and if $O_p(K)\ne 1$ then at least one of $O_p(p_1(K))\ne 1$ or $O_p(p_2(K))\ne 1$, so that at least one factor in the fixed point join is contractible, and that forces the fixed points to be contractible.
    
    (2) Suppose $\Delta_1$ and $\Delta_2$ are synthetic buildings for $G_1$ and $G_2$. The stabilizer of $a\cup b$ in $G_1\times G_2$ is $\Stab_{G_1}(a)\times \Stab_{G_2}(b)$, and this has order divisible by $p$, because the stabilizers of $a$ and $b$ do. Similarly the stabilizer of $a$ in $G_1\times G_2$ is $\Stab_{G_1}(a)\times G_2$ and the stabilizer of $b$ in $G_1\times G_2$ is $G_1\times \Stab_{G_2}(b)$, and both these have order divisible by $p$. This verifies the second condition in the definition of a synthetic building, and the first condition was verified in (1). 

\end{proof}
\begin{example}
    As an application of the join construction, we may construct groups with minimal synthetic buildings in a range of different dimensions. In Example~\ref{two-dimensions-example} we observed that each of the groups $S_{2p}$ when $p\ge 5$ has a non-trivial minimal synthetic building $\Delta_0$ of dimension 0, and another minimal synthetic building $\Delta_1$ of dimension 1. Taking the three joins $\Delta_0*\Delta_0$, $\Delta_0*\Delta_1$ and $\Delta_1*\Delta_1$ we obtain minimal synthetic buildings for $G=S_{2p}\times S_{2p}$ of dimensions 1, 2 and 3. We may verify, for example in the case $p=5$, that they are all minimal by techniques such as listing the $G$-subcomplexes with connected orbit diagram (exploiting Proposition~\ref{prop:sb_ctd_orbit_space}) and showing that none of them has Euler characteristic $\chi \equiv 1 \;(\textrm{mod}\; p^4)$, as well as the observation that if a join $\Delta_1*\Delta_2$ is a synthetic building for $G_1\times G_2$, where each $\Delta_i$ is a complex for $G_i$, then each $\Delta_i$ must be a synthetic building for $G_i$.
\end{example}

We can use the join to get an operation on (weak) synthetic buildings for a single group $G$, making the set of $G$-homotopy classes of such buildings into a semigroup. If $\Delta_1,\Delta_2$ are weak synthetic buildings for $G$ then $\Delta_1*\Delta_2$ becomes a weak synthetic building on letting $G$ act via the diagonal homomorphism $\delta:G\to G\times G$ given by $\delta(g)=(g,g)$. Thus $(\Delta_1*\Delta_2)\downarrow_{\delta(G)}^{G\times G}$ is a weak synthetic building for $G$. We can convert it into a synthetic building $\Res_{\delta(G)}^{G\times G}(\Delta_1*\Delta_2)=\SB((\Delta_1*\Delta_2)\downarrow_{\delta(G)}^{G\times G})$ for $G$. 

\begin{proposition}
    Given synthetic buildings $\Delta_1,\Delta_2$ for $G$, the operation $(\Delta_1,\Delta_2)\mapsto \Res_{\delta(G)}^{G\times G}(\Delta_1*\Delta_2)$ is an associative binary operation on $G$-equivariant homotopy equivalence classes of synthetic buildings for $G$. Thus the set of $G$-equivariant homotopy equivalence classes of synthetic buildings becomes a semigroup under this operation.
\end{proposition}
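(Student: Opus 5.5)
The plan is to prove two things: that the operation is well defined on $G$-homotopy classes, and that it is associative. Associativity turns out to hold on the nose, at the level of complexes. Write $\Delta_1\cdot\Delta_2=\SB(\Delta_1*\Delta_2)$ with $G$ acting diagonally.

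\emph{Associativity.} The key step is the identity $\SB(\SB(Y)*Z)=\SB(Y*Z)$ for $G$-complexes $Y,Z$ with diagonal action, and symmetrically in the other variable. For the inclusion $\subseteq$, note that a simplex of $\SB(Y)*Z$ is a simplex of $Y*Z$, and its stabilizer does not depend on the ambient complex. For $\supseteq$, take a simplex of $Y*Z$ of the form $a$, $b$ or $a\cup b$ whose stabilizer has order divisible by $p$. The stabilizer of $a\cup b$ is $G_a\cap G_b\subseteq G_a$, so $p$ divides $|G_a|$ and $a\in\SB(Y)$. Hence the simplex already lies in $\SB(Y)*Z$. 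Using the canonical $G$-isomorphism $(\Delta_1*\Delta_2)*\Delta_3\cong\Delta_1*(\Delta_2*\Delta_3)$, both bracketings then equal $\SB(\Delta_1*\Delta_2*\Delta_3)$.

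\emph{Well-definedness.} Suppose $\Delta_1\simeq_G\Delta_1'$. After passing to barycentric subdivisions, which is harmless by Proposition~\ref{subdivision-proposition}, I would represent the equivalence by an equivariant simplicial map $f$. By Proposition~\ref{SB-is-functorial-proposition}, $F=\SB(f*\mathrm{id})$ is a $G$-map $\SB(\Delta_1*\Delta_2)\to\SB(\Delta_1'*\Delta_2)$. By Bredon's criterion, it suffices to show that $F^H$ is an ordinary homotopy equivalence for every $H\le G$.

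For any $G$-complex $X$ we have the decomposition
$$\SB(X)^H=\bigcup_{Q}X^{\langle H,Q\rangle},$$
where $Q$ runs over subgroups of order $p$. Finite intersections of these pieces are again fixed-point sets $X^K$ with $K\supseteq H$ and $p\mid |K|$. The map $F$ respects this cover. On each piece and each intersection it is $(f*\mathrm{id})^K=f^K*\mathrm{id}$ from $\Delta_1^K*\Delta_2^K$ to $\Delta_1'^K*\Delta_2^K$, and this is a homotopy equivalence because $f^K$ is one (Bredon again) and the join preserves homotopy equivalences. A gluing lemma for finite covers by subcomplexes then shows that $F^H$ is a homotopy equivalence. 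Here I would use the homotopy colimit over the nerve of the cover, or an induction on the number of pieces via the gluing lemma for pushouts along cofibrations.

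Finally, I would note that the result is a synthetic building by Proposition~\ref{prop:weak_to_strong}, since the diagonal restriction of the join is a weak synthetic building. The main obstacle is the gluing step, and in particular being careful that passing to simplicial approximations and subdivisions keeps everything equivariant.
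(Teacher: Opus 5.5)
\textbf{Comparison with the paper's proof.} Your proposal is correct. Your associativity argument is the paper's argument made explicit. The paper says that the join is associative, and that deleting simplices with stabilizer of order prime to $p$ in two stages gives the same result as deleting them once. That is exactly your identity $\SB(\SB(Y)*Z)=\SB(Y*Z)$.

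For well-definedness you take a genuinely different and heavier route. The paper argues directly: the join preserves $G$-homotopy equivalences, and $\SB$ is functorial. The point implicit there is that $|\SB(X)|$ is precisely the set of points of $|X|$ whose isotropy group has order divisible by $p$. Any $G$-map can only enlarge isotropy, and so can any $G$-homotopy $X\times I\to X'$, since $G_{(x,t)}=G_x$. So $\SB$ carries $f*\mathrm{id}$, a homotopy inverse $g*\mathrm{id}$, and the homotopies $h_t*\mathrm{id}$ to the same data on the $\SB$-subcomplexes. Hence $\SB(f*\mathrm{id})$ is a $G$-homotopy equivalence, with no fixed-point decomposition, gluing lemma or equivariant Whitehead theorem needed.

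The same isotropy remark disposes of your concern about simplicial approximation. Both this argument and your cover argument work for continuous $G$-maps, so there is no need to subdivide. Note also that Proposition~\ref{subdivision-proposition} would not by itself justify $\SB(\sd^n\Delta_1*\Delta_2)\simeq_G\SB(\Delta_1*\Delta_2)$; the isotropy remark does. A minor point: that $f^K$ is a homotopy equivalence is just restriction of a $G$-homotopy equivalence to fixed points, not Bredon's theorem.

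What your route buys is a stronger statement. The decomposition $\SB(X)^H=\bigcup_Q X^{\langle H,Q\rangle}$ shows that $\SB(\phi)$ is a $G$-homotopy equivalence for any $G$-map $\phi$ that is a homotopy equivalence only on $K$-fixed points for subgroups $K$ with $p\mid |K|$. No $G$-homotopy inverse is required. It also gives explicit control of the fixed points of $\SB(X)$.
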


\begin{proof}
    It is straightforward to see that the join itself is associative: $(\Delta_1*\Delta_2)*\Delta_3\cong \Delta_1*(\Delta_2*\Delta_3)$ as $G$-simplicial complexes with the diagonal action. If we also remove orbits with stabilizers of order prime to $p$ in two stages, corresponding to the two joins, we get the same result as if we removed such orbits in one go from the two-step join. The fact that the operation respects $G$-homotopy classes arises because the join operation does, and the operation $\SB$ is functorial, by Proposition~\ref{SB-is-functorial-proposition}.
\end{proof}

\subsection{The product}
We can also produce a binary operation on synthetic buildings using the product. 
Given a $G_1$-simplicial complex $\Delta_1$ and a $G_2$-simplicial complex $\Delta_2$, we may give a simplicial action of $G_1\times G_2$ on the product space $\Delta_1\times \Delta_2$ as follows.
For each $i$, let $\calP(\Delta_i)$ be the poset of simplices of $\Delta_i$, ordered by inclusion. Thus $\calP(\Delta_i)$ is acted on by $G_i$ and its order complex is the barycentric subdivision of $\Delta_i$. We have seen in Proposition~\ref{subdivision-proposition} that passing to a subdivision does not affect being a synthetic building or minimality. Now $G_1\times G_2$ acts on the product poset $\calP(\Delta_1)\times \calP(\Delta_2)$ and hence on its order complex, which is a triangulation of the space $\Delta_1\times\Delta_2$. We let $\Delta_1\times\Delta_2$ also denote the simplicial complex we have just defined.

It will often be the case for synthetic buildings $\Delta_1,\Delta_2$ that $\Delta_1\times \Delta_2$ is not even a weak synthetic building for $G_1\times G_2$, and small examples such as $G_1=G_2=S_3$ at $p=2$ and $\Delta_1=\Delta_2 = \{\hbox{three points}\}$ show this. On the other hand, if $G_1=G_2=G$ and we let $G$ act diagonally we have the following.

\begin{proposition}
    Let $\Delta_1,\Delta_2$ be weak synthetic buildings for $G$. Then $\Delta_1\times \Delta_2$ is a weak synthetic building for $G$ with the diagonal action. Thus $\SB(\Delta_1\times \Delta_2)$ is a synthetic building for $G$, and this provides an associative binary operation on $G$-equivariant homotopy classes of synthetic buildings for $G$. If $G$ has order divisible by $p$, the trivial synthetic building acts as the identity for this operation.
\end{proposition}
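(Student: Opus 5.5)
The plan is to work with the triangulation of $\Delta_1\times\Delta_2$ as the order complex of the product poset $\calP(\Delta_1)\times\calP(\Delta_2)$, and to compute fixed points of this poset directly.

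\emph{Fixed points of the product.} For any subgroup $H\le G$ acting diagonally,
$$
(\calP(\Delta_1)\times\calP(\Delta_2))^H=\calP(\Delta_1)^H\times\calP(\Delta_2)^H .
$$
Because every simplex stabilizer fixes its simplex pointwise, $\calP(\Delta_i)^H$ is exactly the face poset of the subcomplex $\Delta_i^H$. Hence $(\Delta_1\times\Delta_2)^H$ is the product-poset triangulation of $\Delta_1^H\times\Delta_2^H$, which is homeomorphic to the topological product. If $O_p(H)\ne 1$, both factors are contractible by Proposition~\ref{equiv-condition-proposition}, so the product is contractible. By the same proposition, $\Delta_1\times\Delta_2$ is then a weak synthetic building. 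Proposition~\ref{prop:weak_to_strong} now shows that $\SB(\Delta_1\times\Delta_2)$ is a synthetic building.

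\emph{Well-definedness and associativity.} If $\Delta_1\simeq_G\Delta_1'$, then $\Delta_1\times\Delta_2\simeq_G\Delta_1'\times\Delta_2$. The subtle point is that $\SB$ of $G$-homotopy equivalent weak synthetic buildings should again be $G$-homotopy equivalent; this is the step I expect to be the main obstacle. Functoriality alone (Proposition~\ref{SB-is-functorial-proposition}) does not give inverse homotopies inside $\SB$. I would use Bredon's criterion instead. An equivariant map $f:\Delta\to\Delta'$ of weak synthetic buildings is a $G$-homotopy equivalence if $f^K$ is an ordinary homotopy equivalence for every $K$. For $\SB(\Delta)$, the fixed points $\SB(\Delta)^K$ equal $\Delta^K$ when $p$ divides $|K|$, since any simplex fixed by $K$ then has stabilizer of order divisible by $p$. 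For $K$ a $p'$-group I would argue that $\SB(\Delta)^K\to\Delta^K$ is a homotopy equivalence, or alternatively reduce everything to the $p'$-free comparison via Theorem~\ref{structure-theorem-for-chain-complex}. The fallback is to apply Bredon's theorem directly to $\SB(f)$: subgroups $K$ with $p\mid |K|$ are handled by the equality of fixed points, and the remaining cases by a direct check. Associativity then follows from the isomorphism $(\Delta_1\times\Delta_2)\times\Delta_3\cong\Delta_1\times(\Delta_2\times\Delta_3)$ of triple product posets up to subdivision (Proposition~\ref{subdivision-proposition}). Applying $\SB$ in two stages gives the same simplices as applying it once, because each stabilizer is an intersection of stabilizers.

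\emph{Identity.} If $\Delta_2=\bullet$, then $\calP(\Delta_1)\times\calP(\bullet)\cong\calP(\Delta_1)$, so $\Delta_1\times\bullet=\sd\Delta_1$. This is a synthetic building by Proposition~\ref{subdivision-proposition}, so $\SB$ leaves it unchanged, and it is $G$-homeomorphic to $\Delta_1$. The same argument applies on the other side, so the trivial synthetic building is a two-sided identity.
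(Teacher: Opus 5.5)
Your fixed-point computation, your associativity argument and your identity argument agree with the paper's proof. The genuine gap is the step you yourself flag: that $\SB$ sends $G$-homotopy equivalent weak synthetic buildings to $G$-homotopy equivalent ones. This is exactly what makes the operation well defined on homotopy classes.

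Your main route is to show that $\SB(\Delta)^K\to\Delta^K$ is a homotopy equivalence for every $p'$-subgroup $K$, and this is false. Take $G=S_3$, $p=2$ and $\Delta_1=\Delta_2=S_3/C_2$. The product $\Delta_1\times\Delta_2$ is nine points: the orbit $S_3/C_2$ together with a free orbit. $\SB(\Delta_1\times\Delta_2)$ is three points, so already for $K=1$ the inclusion is not a homotopy equivalence. (If your claim held for all $K$, Bredon's theorem would make $\SB(\Delta)\hookrightarrow\Delta$ a $G$-homotopy equivalence for every weak synthetic building, and this example shows it is not.)

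Your alternative routes do not close the gap either. Theorem~\ref{structure-theorem-for-chain-complex} is a chain-level statement and cannot produce a $G$-homotopy equivalence of spaces. The ``direct check'' in your fallback is precisely the missing content. For a $p'$-group $K$ one has $\SB(\Delta)^K=\bigcup\Delta^L$ over $L\supseteq K$ with $p\mid |L|$. Comparing two such unions would need a gluing or homotopy-colimit argument that you do not give.

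The repair is simpler than Bredon's criterion, and it is what the paper means by applying $\SB$ to the maps exhibiting the equivalence. Your objection that functoriality gives no homotopies inside $\SB$ disappears once you regard a $G$-homotopy as an equivariant map $\Delta\times I\to\Delta'$, with $G$ acting trivially on $I$. The stabilizer of a chain $(\sigma_0,\tau_0)<\cdots<(\sigma_n,\tau_n)$ is $G_{\sigma_n}$, so $\SB(\Delta\times I)=\SB(\Delta)\times I$.

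Most cleanly, argue point-set. Simplex stabilizers act trivially on their simplices, so the isotropy group of an interior point of $\sigma$ is $G_\sigma$. Hence $|\SB(\Delta)|=\{x\in|\Delta| : p\mid |G_x|\}$. An equivariant map $f$ satisfies $G_x\subseteq G_{f(x)}$, and the isotropy of $(x,t)$ in $\Delta\times I$ is $G_x$. So $f$, a $G$-homotopy inverse $g$, and the homotopies $gf\simeq 1$ and $fg\simeq 1$ all restrict to the $\SB$ subspaces. Therefore $\SB$ preserves $G$-homotopy equivalences, and well-definedness follows.
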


\begin{proof}
We check that $\Delta_1\times \Delta_2$ is a weak synthetic building when $G$ has the diagonal action. If $H\le G$ then $(\Delta_1\times \Delta_2)^H=\Delta_1^H\times \Delta_2^H$ and if $O_p(H)\ne 1$ so that $\Delta_1^H$ and $\Delta_2^H$ are contractible, then so is $(\Delta_1\times \Delta_2)^H$. Thus $\Delta_1\times \Delta_2$ is a weak synthetic building. The fact that the operation $\SB(\Delta_1\times \Delta_2)$ is associative comes from the fact that the product itself is associate, and if we remove orbits whose stabilizers have order prime to $p$ in two steps, we get the same as if we had removed them from $\Delta_1\times \Delta_2\times\Delta_3$ in one step.

To see that this binary operation respects $G$-equivariant homotopy classes, if $\Delta_1\simeq_G\Delta'_1$ are $G$-homotopy equivalent weak synthetic buildings, the maps that show this extend to give $\Delta_1\times\Delta_2\simeq_G\Delta'_1\times\Delta_2$. Applying the functor $\SB$ to these maps gives a $G$-homotopy equivalence in the category of synthetic buildings.  It is immediate that the trivial synthetic building acts as the identity.
\end{proof}

\begin{corollary}
    Under the operation of product followed by the operation $\SB$, the set of $G$-equivariant homotopy classes of synthetic buildings becomes a monoid.
\end{corollary}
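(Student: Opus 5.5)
The corollary is a formal consequence of the preceding Proposition, and I will deduce it directly from that. Let $\calS$ be the set of $G$-equivariant homotopy equivalence classes of synthetic buildings for $G$ at $p$. On $\calS$ define $[\Delta_1]\cdot[\Delta_2] = [\SB(\Delta_1\times\Delta_2)]$. To get a monoid I must check three things: the operation is well defined on classes, it is associative, and it has a two-sided identity.

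Well-definedness and associativity are exactly what the Proposition asserts. For well-definedness, $G$-homotopy equivalences $\Delta_1\simeq_G\Delta_1'$ and $\Delta_2\simeq_G\Delta_2'$ induce $G$-homotopy equivalences of the products with the diagonal action. The functor $\SB$ of Proposition~\ref{SB-is-functorial-proposition} then carries these to maps between synthetic buildings. To see these are again $G$-homotopy inverse, I would use one observation: the homotopies $\Delta\times I\to\Delta'$ can be taken simplicial on a subdivision with $I$ carrying the trivial action, so stabilizers do not shrink and $\SB$ applies to them as well. Alternatively, I would note that a $G$-map of synthetic buildings inducing equivalences on all $H$-fixed points with $O_p(H)\ne 1$ is a $G$-homotopy equivalence. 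For associativity I would identify both bracketings with $\SB(\Delta_1\times\Delta_2\times\Delta_3)$. The key points are that products are associative up to $G$-isomorphism of triangulations up to subdivision, and that by Proposition~\ref{subdivision-proposition} subdivision does not matter. Also, $\SB(\SB(\Delta_1\times\Delta_2)\times\Delta_3)$ and $\SB(\Delta_1\times\Delta_2\times\Delta_3)$ agree, because $\SB(\Delta_1\times\Delta_2)$ contains all the $p$-fixed points and a simplex of the triple product with stabilizer of order divisible by $p$ projects to simplices whose stabilizers contain it.

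For the identity, if $p\mid |G|$, take $\bullet$ with trivial action. Then $\Delta\times\bullet$ is isomorphic to the poset of simplices of $\Delta$, namely $\sd\Delta$. By Proposition~\ref{subdivision-proposition}, $\sd\Delta$ is already a synthetic building, so $\SB$ does nothing. Moreover $\sd\Delta\simeq_G\Delta$, via the equivariant map $g$ constructed in the proof of that proposition. Hence $[\bullet]$ is a two-sided identity.

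The remaining case is $p\nmid|G|$. There the only synthetic building is $\emptyset$, so $\calS$ has one element and is trivially a monoid, and I would note this separately. The only genuinely delicate step is the homotopy-invariance claim under $\SB$. I would handle it with the fixed-point/Bredon criterion, applied to the $G$-maps, restricting attention to subgroups of order divisible by $p$.
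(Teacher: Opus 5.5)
Your proposal is correct and follows the paper's route. The corollary is read off from the preceding Proposition: fixed points of a diagonal product are products of fixed points, applying $\SB$ in two stages agrees with applying it once, and $\bullet$ is the identity when $p\mid|G|$. Your observation that a $G$-homotopy with trivial action on $I$ cannot shrink isotropy, and so carries $\SB(\Delta)\times I$ into $\SB(\Delta')$, supplies the justification the paper leaves implicit when it says ``applying the functor $\SB$''. Your separate treatment of the case $p\nmid|G|$ is also a welcome addition.

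One caution: drop the ``alternatively'' criterion as you stated it. For synthetic buildings the $H$-fixed points with $O_p(H)\ne 1$ are contractible on both sides, so every $G$-map between synthetic buildings would satisfy its hypothesis. For example, $\SpCpx{A_5}{2}\to\bullet$ at $p=2$ satisfies it but is not a $G$-homotopy equivalence.

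The version in your last sentence is the right one. Checking all $H$ with $p\mid|H|$ suffices, by the Whitehead theorem for $G$-CW complexes whose isotropy groups lie in a given collection, and for such $H$ one has $\SB(\Delta)^H=\Delta^H$.
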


\begin{example}
Several properties of the join and product are illustrated by considering the situation when $p=2$ and  the pair of groups $G_1, G_2$ are both isomorphic to $S_3$, with $\Delta_1=\Delta_2 = S_3/C_2$, a set of three points. In this situation it is readily calculated that the join $\Delta_1*\Delta_2$ is a minimal synthetic building for $S_3\times S_3$. However, $(\Delta_1*\Delta_2)\downarrow_{\delta(S_3)}^{S_3\times S_3}$ is a weak synthetic building for $S_3$, but it is not a synthetic building because it has a free orbit of edges.  It is also not slender. Its orbit diagram is first of the two displayed below, and on applying the functor $\SB$ we obtain the synthetic building $\Res_{\delta(S_3)}^{S_3\times S_3}(\Delta_1*\Delta_2)$ whose orbit diagram is the second shown. This is homotopy equivalent to the original sets $\Delta_i$, so that $\Delta_i$ is idempotent under the binary operation given by join.
\begin{center}
\begin{tikzpicture}
    \fill (-2,0) circle (0.1)
        node[left=1pt] {$C_2$};
    \fill (0,0) circle (0.1)
        node[right=1pt] {$C_2$};
    \draw (-2,0) to [out=20,in=160] (0,0);
    \node[above] at (-1,.2) {$1$};
    \draw (-2,0) to [out=340,in=200] (0,0);
    \node[below] at (-1,-0.2) {$C_2$};
    \fill (4,0) circle (0.1)
        node[left=1pt] {$C_2$};
    \fill (6,0) circle (0.1)
        node[right=1pt] {$C_2$};
    \draw (4,0)--(6,0)
        node[below=1pt,pos=0.5] {$C_2$};
    \node[right] at (7,0) {$\simeq_{S_3}$};
 \fill (8.2,0) circle (0.1)
    node[right=1pt] {$C_2$};
\end{tikzpicture}
\end{center}

If we consider the product $\Delta_1\times\Delta_2$, it is a set of 9 points and is not even a weak synthetic building for $S_3\times S_3$. For $\delta(S_3)$ it is a weak synthetic building that is the disjoint union of a free orbit and an orbit of size 3. Applying the functor $\SB$ we see that $\Res_{\delta(S_3)}^{S_3\times S_3}(\Delta_1\times\Delta_2)$ is the single orbit of size 3, so that $\Delta_i$ is idempotent with respect to the binary operation given by product (as well as with respect to the binary operation given by join).

We see from this that neither the semigroup given by join nor the monoid given by product embed in a group.
\end{example}

\subsection{The suspension}
We take the \textit{suspension} of a $G$-simplicial complex $\Delta$ to be the join
$$\Sigma(\Delta)=\Delta*\{\{a\},\{b\}\}$$
where $a\not=b$ do not already appear as vertices of $\Delta$, and $G$ acts trivially on them.

\begin{proposition}
    The suspension $\Sigma$ preserves synthetic buildings, weak synthetic buildings, and the property of being slender.
\end{proposition}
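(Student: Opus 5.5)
The plan is to check directly, for each of the three properties, how fixed-point sets, stabilizers and orbit spaces behave under the suspension $\Sigma(\Delta)=\Delta*\{\{a\},\{b\}\}$. Since $G$ acts trivially on $\{a\},\{b\}$, this is the join with the diagonal action of $G$. It is really the join for $G\times 1$ acting on $\Delta*S^0$, restricted along $G\cong G\times 1$. I would not try to reuse the join theorem verbatim, because $S^0$ is not a synthetic building for the trivial group. Instead I will argue directly.

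\emph{Weak synthetic buildings.} For any subgroup $H\le G$ the fixed points are
$$\Sigma(\Delta)^H=\Delta^H*\{\{a\},\{b\}\}=\Sigma(\Delta^H),$$
because $a$ and $b$ are fixed by everything, and a simplex $\sigma\cup\{a\}$ is fixed by $H$ exactly when $\sigma$ is. If $O_p(H)\ne1$ then $\Delta^H$ is contractible by condition (1$'$) of Proposition~\ref{equiv-condition-proposition}. The suspension of a contractible complex is contractible; for instance it is the union of two cones glued along a contractible space. So condition (1$'$) holds for $\Sigma(\Delta)$, and Proposition~\ref{equiv-condition-proposition} gives condition (1).

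\emph{Synthetic buildings.} The simplices of $\Sigma(\Delta)$ are of four kinds: $\sigma$, $\sigma\cup\{a\}$ and $\sigma\cup\{b\}$ with $\sigma\in\Delta$, and the vertices $\{a\}$, $\{b\}$. The stabilizer of each simplex of the first three kinds is $G_\sigma$, which has order divisible by $p$ by hypothesis. The stabilizer of $\{a\}$ or $\{b\}$ is $G$ itself. Here I need $p\mid |G|$. If $\Delta$ is a synthetic building and $G$ is a $p'$-group, then $\Delta=\emptyset$ (by the earlier example), and the suspension of $\emptyset$ is two points with stabilizer $G$. In that case the statement fails unless we read $\Sigma$ as applied to non-empty $\Delta$, which is the convention in the join definition. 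So I will note that $\Delta\ne\emptyset$ is implicit there. Then some simplex has stabilizer of order divisible by $p$, which forces $p\mid|G|$, and condition (2) holds. This edge case is the only real obstacle; the rest is routine.

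\emph{Slenderness.} The orbit space is $G\backslash\Sigma(\Delta)=\Sigma(G\backslash\Delta)$, since $G$ acts trivially on the suspension points and the orbits of the simplices $\sigma\cup\{a\}$ correspond to orbits of $\sigma$. To make the identification simplicial I would pass to a barycentric subdivision, using Proposition~\ref{subdivision-proposition} as needed. If $G\backslash\Delta$ is contractible, then its suspension is contractible, so $\Sigma(\Delta)$ is slender.
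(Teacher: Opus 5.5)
Your proposal is correct and follows essentially the same route as the paper: the fixed points of the suspension are the suspension of the fixed points, the only new stabilizer is $G$ itself, and the orbit space of $\Sigma(\Delta)$ is the suspension of $G\backslash\Delta$. Checking condition (1) via condition (1$'$), and excluding the empty complex through the non-emptiness convention in the join definition, simply make explicit what the paper leaves implicit when it assumes $p$ divides $|G|$.
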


\begin{proof}
    The fixed points of a subgroup on the suspension is the suspension of the fixed points, and if the fixed points are contractible so is their suspension. The stabilizers that arise with $\Sigma(\Delta)$ are the same as those that arise with $\Delta$, together with $G$ itself if $p\bigm| |G|$. Finally, the quotient of the suspension by $G$ is the suspension of the quotient, so the property of being slender is preserved.
\end{proof}

 The suspension provides a way to construct infinitely many synthetic buildings from non-trivial, non-empty synthetic buildings. Provided $G$ has order divisible by $p$, the proper suspensions will never be minimal because they always have the trivial synthetic building as a sub-synthetic building, so this construction is not useful in constructing exotic minimal synthetic buildings, and in fact their dimensions will increase without bound.

\section{A method for computing reduced Lefschetz modules}
\label{Lefschetz-module-section}
    
At several places where we describe specific synthetic buildings we have included the values of the reduced Lefschetz modules (defined in Theorem~\ref{structure-theorem-for-chain-complex}), but without details of the calculations. We present here a lemma that facilitates these calculations. A special case of this lemma appeared in \cite[Propn. 5.3]{Web1987-2} but no proof was given there. We provide a proof now.

The result is stated for a field $k$ of characteristic $p$, but it also holds if we assume $k$ is a complete discrete valuation ring with residue field of characteristic $p$, by the correspondence of projective $kG$-modules over the discrete valuation ring, and over the residue field.

\begin{lemma} Let $k$ be a  field of characteristic $p$ and $\Delta$ a weak synthetic building. For each simple
$kG$-module $S$, the multiplicity of the projective cover $P_S$
as a summand of $\tilde L(\Delta)$ equals
$$
\frac{\sum_{\sigma\in[\tilde\Delta/G]}(-1)^{\dim\sigma}
(\hbox{multiplicity of }P_k\hbox{ for }G_\sigma\hbox{ in
}S\downarrow_{G_\sigma})}
{\dim\End_{kG}(S)}.$$
\end{lemma}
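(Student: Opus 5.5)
We would prove this by pairing the virtual projective $\tilde L(\Delta)$ against the simple module $S$ using $\Hom$, and computing the pairing orbit by orbit. By Theorem~\ref{structure-theorem-for-chain-complex}(2), $\tilde L(\Delta)=\sum_T a_T P_T$ in the Green ring, with $T$ running over simple $kG$-modules. We aim to show that $a_S$ equals the displayed quotient.

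The first step is the standard fact that $\dim_k\Hom_{kG}(P_T,S)=\delta_{T,S}\dim\End_{kG}(S)$. Since $\Hom_{kG}(-,S)$ is additive on direct sums, it gives a well-defined homomorphism from the Green ring to $\ZZ$, namely $M\mapsto\dim_k\Hom_{kG}(M,S)$. Applying it to $\tilde L(\Delta)$ therefore yields $a_S\dim\End_{kG}(S)$.

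The second step computes the same quantity from the chain modules. Here $\tilde\Delta$ denotes $\Delta$ with the empty simplex of dimension $-1$ adjoined, and its stabilizer is $G$. The chain module $\tilde C_i(\Delta)$ is the direct sum, over orbit representatives $\sigma$ of $i$-simplices, of the permutation modules $k[G/G_\sigma]=\mathrm{Ind}_{G_\sigma}^G k$. We need no orientation twist, because $G_\sigma$ fixes $\sigma$ pointwise. By Frobenius reciprocity,
$$\dim\Hom_{kG}(\mathrm{Ind}_{G_\sigma}^Gk,S)=\dim\Hom_{kG_\sigma}(k,S\downarrow_{G_\sigma})=\dim (S\downarrow_{G_\sigma})^{G_\sigma},$$
and summing these with sign $(-1)^{\dim\sigma}$ gives $\dim\Hom(\tilde L(\Delta),S)$.

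The main obstacle is replacing $\dim(S\downarrow_{G_\sigma})^{G_\sigma}$ by the multiplicity of $P_k$ for $G_\sigma$ in $S\downarrow_{G_\sigma}$. Each term can change, since non-projective summands of $S\downarrow_{G_\sigma}$ also contribute fixed points. The claim is only that their contributions cancel in the alternating sum, and this is where we use that $\Delta$ is a weak synthetic building.

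Our plan is to use Theorem~\ref{structure-theorem-for-chain-complex}(1): the complex $\tilde C.(\Delta)$ equals $D.\oplus P.$, where $D.$ is contractible. Rather than argue at the level of modules, we would introduce a second additive functional, $\phi(M)=\dim\Hom_{kG}(M,S)-\dim\underline{\Hom}_{kG}(M,S)$, whose value is $\dim$ of $\mathrm{P}\Hom_{kG}(M,S)$, the homomorphisms factoring through a projective. For $M=\mathrm{Ind}_{G_\sigma}^Gk$ we would check two facts:
\begin{itemize}
\item $\mathrm{P}\Hom_{kG}(\mathrm{Ind}k,S)\cong\mathrm{P}\Hom_{kG_\sigma}(k,S\downarrow)$, because induction and restriction preserve projectives and the adjunction respects maps factoring through projectives;
\item $\dim\mathrm{P}\Hom_{kH}(k,U)$ equals the multiplicity of $P_k$ in $U$, because a map $k\to U$ factors through a projective iff it factors through $U$'s projective summands, and for $P_k$ the image of that factorization is the socle.
\end{itemize}
So $\phi$ applied to $\tilde L(\Delta)$ gives the numerator. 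On projectives the stable Hom vanishes, so $\phi$ agrees with $\dim\Hom(-,S)$ there, and $\phi(\tilde L(\Delta))=a_S\dim\End(S)$. It then suffices that $\phi$ is defined on the Green ring, which is immediate since it is additive. Equivalently, $\dim\underline{\Hom}(-,S)$ vanishes on the virtual projective $\tilde L(\Delta)$. Dividing by $\dim\End_{kG}(S)$ then finishes the proof.
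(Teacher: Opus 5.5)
Your proof is correct and is essentially the paper's argument. Your functional $\phi(M)=\dim_k\mathrm{P}\Hom_{kG}(M,S)$ is exactly the form $\langle M,S\rangle_G$ used there: the maps factoring through projectives are the image of the trace map $\mathrm{Tr}_1^G$ on $\Hom_k(M,S)\cong M^*\otimes S$, and the dimension of that image is the multiplicity of $P_k$. Your two bulleted facts, together with the vanishing of $\underline{\Hom}_{kG}(-,S)$ on projectives, directly prove what the paper cites from Benson, namely Frobenius reciprocity for $\langle -,-\rangle$ and $(V,W)_G=\langle V,W\rangle_G$ for virtual projective $V$.

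The one step that deserves an extra line is that a map $k\to U$ factoring through a projective has zero component in a complement $U_0$ of the projective part of $U$. Such a map factors through a summand $P_k$ (the injective hull of $k$) of that projective, and a map $P_k\to U_0$ that is nonzero on the simple socle is injective, hence split, which is impossible.
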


Here $\tilde\Delta$ denotes $\Delta$ together with an artificial extra simplex stabilized by $G$ and in dimension $-1$. By the `multiplicity of $P_k$ for $G_\sigma$ in $S\downarrow_{G_\sigma}$' we mean the multiplicity as a direct summand. In case $\sigma$ is the extra simplex in degree $-1$, so that $G_\sigma = G$, the multiplicity of $P_k$ for $G$ in $S$ is only non-zero when $S=P_k$, which happens if and only if $|G|$ is prime to $p$.

\begin{proof}
We use the bilinear forms
$$\begin{aligned} (V,W)_G&=\dim_k\Hom_{kG}(V,W)\cr
\langle V,W\rangle_G&=\hbox{ multiplicity of $P_k$ for $kG$ as
a summand of }V^*\otimes W\cr
\end{aligned}$$
defined on the Green ring of $kG$-modules and considered on page 34 of \cite{Ben1984}.
When $V$ is a virtual projective module we have $(V,W)_G=\langle
V,W\rangle_G$ as can be deduced from 2.4.1 of
\cite{Ben1984}.

By Theorem~\ref{structure-theorem-for-chain-complex}, $\tilde L(\Delta)$ is a virtual projective module so we can write
$\tilde L(\Delta)=\sum_{S\,{\rm simple}}n_S\cdot P_S$ in the Green ring. We have
$$(\tilde L(\Delta),S)_G=n_S\cdot\dim\End_{kG}(S)$$
and this equals
$$
\begin{aligned}\langle \tilde L(\Delta), S\rangle_G
&=\langle \sum_{\sigma\in[\tilde\Delta/G]}(-1)^{\dim\sigma}
k\uparrow_{G_\sigma}^G, S\rangle_G\cr
&=\sum_{\sigma\in[\tilde\Delta/G]}(-1)^{\dim\sigma}\langle 
k\uparrow_{G_\sigma}^G, S\rangle_G\cr
&=\sum_{\sigma\in[\tilde\Delta/G]}(-1)^{\dim\sigma}\langle 
k, S\downarrow_{G_\sigma}^G\rangle_{G_\sigma}
\qquad\hbox{by \cite[Cor. 2.4.6]{Ben1984}}\cr
&=\sum_{\sigma\in[\tilde\Delta/G]}(-1)^{\dim\sigma}\hbox{
multiplicity of $P_k$ for $G_\sigma$ in }
S\downarrow_{G_\sigma}^G.\cr
\end{aligned}
$$
Dividing by $\dim\End_{kG}(S)$ gives us the multiplicity $n_S$.
\end{proof}

In the following corollary we write $|G|_p$ for the order of a Sylow $p$-subgroup of $G$.

\begin{corollary}
    The multiplicity of $P_S$ in $\tilde L(\Delta)$ is 0 unless $\dim S \ge \min_{\sigma\in\Delta}|G_\sigma|_p$. Thus if $\Delta$ is a synthetic building and $p\bigm| |G|$ then $P_k$ does not appear as a summand of  $\tilde L(\Delta)$.
\end{corollary}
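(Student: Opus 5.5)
The plan is to read everything off the multiplicity formula in the preceding lemma. Fix a simple $kG$-module $S$. The multiplicity $n_S$ of $P_S$ in $\tilde L(\Delta)$ is, up to the factor $\dim\End_{kG}(S)$, the alternating sum over $\sigma\in[\tilde\Delta/G]$ of the multiplicity of $P_k$ for $G_\sigma$ as a summand of $S\downarrow_{G_\sigma}$. So it is enough to show that every term in this sum vanishes when $\dim S<\min_{\sigma\in\Delta}|G_\sigma|_p$.

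The key observation concerns a single subgroup $H\le G$. If $P_k$ for $kH$ is a summand of $S\downarrow_H$, then $\dim S\ge\dim P_k^{(H)}$. A projective $kH$-module restricts to a free module over a Sylow $p$-subgroup of $H$, so its dimension is divisible by $|H|_p$; in particular $\dim P_k^{(H)}\ge |H|_p$. Hence, if $\dim S<|G_\sigma|_p$, the $\sigma$-term is $0$.

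We handle the two kinds of terms separately.
\begin{enumerate}
\item For genuine simplices $\sigma\in\Delta$ we have $|G_\sigma|_p\ge\min_{\tau\in\Delta}|G_\tau|_p>\dim S$, so these terms vanish.
\item For the artificial $(-1)$-simplex the stabilizer is $G$. This term is nonzero only when $S=P_k$, which forces $p\nmid|G|$, as remarked after the lemma.
\end{enumerate}
In the case $p\nmid|G|$ we have $\dim S\ge 1=|G|_p$. So when $\dim S<\min_{\sigma\in\Delta}|G_\sigma|_p$, we must be in a situation where that minimum exceeds $1$, so $p\mid|G|$ and the $(-1)$ term vanishes too. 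This case bookkeeping is the only delicate point, together with the degenerate case $\Delta=\emptyset$, where the minimum is vacuous and one should interpret the statement suitably. Once it is handled, all terms vanish and $n_S=0$.

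For the second sentence, assume $\Delta$ is a synthetic building and $p\mid|G|$. Then every $G_\sigma$ has order divisible by $p$, so $\min_\sigma|G_\sigma|_p\ge p>1=\dim k$. Applying the first part with $S=k$ shows that $P_k$ has multiplicity $0$ in $\tilde L(\Delta)$.

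I expect no serious obstacle. The only thing to check carefully is the dimension bound $\dim P_k^{(H)}\ge|H|_p$, which follows from freeness on restriction to a Sylow subgroup, together with the treatment of the $(-1)$-simplex term.
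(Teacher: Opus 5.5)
Your proof is correct and follows the paper's argument: every term in the lemma's alternating sum vanishes because $P_k$ for $kG_\sigma$ has dimension at least $|G_\sigma|_p$, and the second sentence follows because all simplex stabilizers have order divisible by $p$. Your separate case analysis for the $(-1)$-simplex is valid, though that term is also covered directly by $|G|_p\ge|G_\sigma|_p$; your remark about $\Delta=\emptyset$ correctly flags a degenerate case that the paper leaves implicit.
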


\begin{proof}
    The dimension of $P_k$ for $kG_\sigma$ is at least $|G_\sigma|_p$ so that if $P_k$ is a summand of $S\downarrow_{G_\sigma}^G$ then $\dim S \ge |G_\sigma|_p$. When $\Delta$ is a synthetic building and $p\bigm| |G|$ every simplex $\sigma$ has 
    $p\bigm| |G_\sigma|$, so $\dim S \ge p$ if $P_S$ is a summand of $\tilde L(\Delta)$. This means $P_k$ cannot occur.
\end{proof}


\section{Conjectures}
\label{conjectures-section}
Many questions arise in this work, and we offer the following conjectures.

\begin{conjecture}
\label{finiteness-conjecture}
    For each finite group and prime $p$, there are only finitely many equivariant homotopy types of minimal synthetic buildings.
\end{conjecture}

\begin{conjecture}
\label{p-subgroups-complex-minimal-conjecture}
    For each finite group $G$ and prime $p$, the $p$-subgroups complex $\SpCpx{G}{p}$ has the equivariant homotopy type of a minimal synthetic building. Furthermore, the dimension of this minimal synthetic building is an upper bound for the dimensions of all minimal synthetic buildings for $G$ at $p$.
\end{conjecture}

The truth of Conjecture~\ref{p-subgroups-complex-minimal-conjecture} would be interesting, in that the dimension of a minimal synthetic building equivariantly homotopy equivalent to $\SpCpx{G}{p}$, plus 1, would provide an analogue, valid for all finite groups and primes, of the Lie rank of a group of Lie type.

\begin{conjecture}
\label{minimal-sb-are-images}
All minimal synthetic buildings are equivariantly homotopy equivalent to epimorphic images of $\SpCpx{G}{p}$.
\end{conjecture}

An affirmative answer to Conjecture~\ref{minimal-sb-are-images} would evidently imply the same for Conjecture~\ref{finiteness-conjecture}.

The next conjecture is about groups of Lie type in characteristic $p$, and it is true for groups of Lie rank at most 2, as follows from Corollary~\ref{Lie-rank-2-uniqueness} and Theorem~\ref{SPES-uniqueness-theorem}. Its truth in general would mean that for these groups we have characterized the Tits building by our topological definition of a synthetic building together with minimality.

\begin{conjecture}
\label{BN-pair-conjecture}
    For a group of Lie type in characteristic $p$, there is only one non-trivial minimal synthetic building up to equivariant homotopy equivalence, namely the Tits building.
\end{conjecture}

The case $\calY=\{1\}$ of the following conjecture says that if $\Delta$ is a synthetic building then $G\backslash\Delta$ is contractible or, in other words, synthetic buildings are slender. In case $\Delta = \Delta(\calS_p(G))$ this was shown to be true by Symonds~\cite{Sym1998}, with subsequent proofs given, for example, in \cite{Bux1999, Gro2023, Ste2023}. The evidence for Conjecture~\ref{orbit-space-contractible-conjecture} is Corollary~\ref{orbit-space-mod-p-acyclic-corollary}.

\begin{conjecture}
\label{orbit-space-contractible-conjecture}
Let $\Delta$ be a synthetic building for $G$ at the prime $p$, and let $\calY$ be a non-empty set of $p$-subgroups of $G$, closed under taking subgroups and conjugation. Suppose $\calY$ does not contain a Sylow $p$-subgroup of $G$ and put 
$$
\Delta_\calY = \{\sigma \in \Delta\bigm| \hbox{Sylow $p$-subgroups of } G_\sigma \hbox{ do not lie in } \calY\}.
$$
Then $\Delta_\calY$ is slender (meaning that $G\backslash \Delta_\calY$ is contractible).
\end{conjecture}

We conclude with three conjectures, the validity of any of which would imply the validity of Quillen's conjecture.

\begin{conjecture}
    Let $\Delta$ be a non-trivial synthetic building for $G$ at the prime $p$. Suppose one of the following conditions is satisfied:
\begin{enumerate}
    \item The reduced Lefschetz module of $\Delta$ over $\FF_p$ is zero: $\tilde L(\Delta)=0$.
    \item The augmented chain complex over $\FF_p$ is contractible: $\tilde C.(\Delta)\simeq_G 0$.
    \item $\Delta$ is ordinarily contractible: $\Delta\simeq \bullet$.
\end{enumerate}
Then $O_p(G)\ne 1$.
\end{conjecture}

Evidently condition (3) implies condition (2) (bearing in mind Theorem~\ref{structure-theorem-for-chain-complex}), which in turn implies condition (1).


\end{document}